\newtheorem{theorem}{Theorem}[section]
\newtheorem{corollary}[theorem]{Corollary}
\newtheorem{lemma}[theorem]{Lemma}
\newtheorem{remark}[theorem]{Remark}
\newtheorem{example}[theorem]{Example}
\newtheorem{examples}[theorem]{Examples}
\newtheorem{foo}[theorem]{Remarks}
\newcommand{\eins}{\text{\ensuremath{1\hspace*{-0.9ex}1}}}
\newcommand{\Lip}{\text{Lip}_1}
\newcommand{\be}{\begin{equation}}
\newcommand{\ee}{\end{equation}}
\newcommand{\bea}{\begin{eqnarray}}
\newcommand{\eea}{\end{eqnarray}}
\newcommand{\beas}{\begin{eqnarray*}}
	\newcommand{\eeas}{\end{eqnarray*}}
\newcommand{\BIGOP}[1]{\mathop{\mathchoice%
		{\raise-0.22em\hbox{\huge $#1$}}%
		{\raise-0.05em\hbox{\Large $#1$}}{\hbox{\large $#1$}}{#1}}}
\newcommand{\BIGboxplus}{\mathop{\mathchoice%
		{\raise-0.35em\hbox{\huge $\boxplus$}}%
		{\raise-0.15em\hbox{\Large $\boxplus$}}{\hbox{\large $\boxplus$}}{\boxplus}}}
\begin{document}

\title{Extended Laplace Principle for Empirical Measures of a Markov Chain}

\author{Stephan Eckstein\thanks{Department of Mathematics, University of Konstanz, 78464 Konstanz Germany, stephan.eckstein@uni-konstanz.de \newline The author likes to thank Daniel Bartl, Michael Kupper and Daniel Lacker for their valuable comments, suggestions and overall help with this work.}
%\and B \thanks{Department} 
}

\date{\today }

\maketitle

\begin{abstract}
	We consider discrete time Markov chains with Polish state space. The large deviations principle for empirical measures of a Markov chain can equivalently be stated in Laplace principle form, which builds on the convex dual pair of relative entropy (or Kullback-Leibler divergence) and cumulant generating functional $f\mapsto \ln \int \exp(f)$. Following the approach by Lacker \cite{lacker2016non} in the i.i.d.~case, we generalize the Laplace principle to a greater class of convex dual pairs. We present in depth one application arising from this extension, which includes large deviations results and a weak law of large numbers for certain robust Markov chains - similar to Markov set chains - where we model robustness via the first Wasserstein distance. The setting and proof of the extended Laplace principle are based on the weak convergence approach to large deviations by Dupuis and Ellis \cite{dupuis2011weak}.
\end{abstract}
\textbf{MSC 2010:} 60F10, 60J05.  \\
\textbf{Keywords}: Large deviations, Markov chains, convex duality, distributional uncertainty.

\setcounter{equation}{0}

\section{Introduction}
Throughout the paper, let $(E,d)$ be a Polish space, $\mathcal{P}(E)$ the space of Borel probability measures on $E$ endowed with the topology of weak convergence and $C_b(E)$ the space of continuous and bounded functions mapping $E$ into $\mathbb{R}$. Let a Markov chain with state space $E$ be given by its initial distribution $\pi_0 \in \mathcal{P}(E)$ and Borel measurable transition kernel $\pi: E \rightarrow \mathcal{P}(E)$, and denote by $\pi_n \in \mathcal{P}(E^n)$ the joint distribution of the first $n$ steps of the Markov chain.
Define the empirical measure map $L_n : E^n \rightarrow \mathcal{P}(E)$ by
\[
L_n(x_1,...,x_n) = \frac{1}{n} \sum_{i=1}^n \delta_{x_i}
\]
and recall the relative entropy $R: \mathcal{P}(E) \times \mathcal{P}(E) \rightarrow [0,\infty]$ given by
\[
R(\nu,\mu) = \int_{E} \log\left(\frac{d\nu}{d\mu}\right) d\nu, \text{ if } \nu \ll \mu,~~~R(\nu,\mu) = \infty, \text{ else.}
\]
The main goal of this paper is to generalize the large deviations result for empirical measures of a Markov chain in its Laplace principle form. Under suitable assumptions on the Markov chain, the usual Laplace principle for empirical measures of a Markov chain states that for all $F \in C_b(\mathcal{P}(E))$
\begin{align}
\label{intro1}
\lim_{n\rightarrow \infty} \frac{1}{n} \ln \int_{E^n} \exp(n F \circ L_n) d\pi_n = \sup_{\nu \in \mathcal{P}(E)} (F(\nu) - I(\nu)).
\end{align}
Here, $I : \mathcal{P}(E) \rightarrow [0,\infty]$ is the rate function, given in the setting of \cite[Chapter 8]{dupuis2011weak} by
\[
I(\nu) = \inf_{q : \nu q = \nu} \int_E R(q(x),\pi(x)) \nu(dx), 
\]
where the infimum is over all stochastic kernels $q$ on $E$ that have $\nu$ as an invariant measure.\footnote{A stochastic kernel $q$ on $E$ is a Borel measurable mapping $q: E \rightarrow \mathcal{P}(E)$. We define $\nu q \in \mathcal{P}(E)$ by $\nu q (A) := \int_E q(x,A) \nu(dx)$ for $\nu \in \mathcal{P}(E)$, where we write $q(x,A) = q(x)(A)$ for $x \in E$ and Borel sets $A \subseteq E$.} The Laplace principle \eqref{intro1} - in the mentioned setting of \cite{dupuis2011weak} - is equivalent to the more commonly used form of the large deviations result for empirical measures of a Markov chain, which states that for all Borel sets $A\subseteq \mathcal{P}(E)$
\[
-\inf_{\nu \in \mathring{A}} I(\nu) \leq \liminf \frac{1}{n} \ln\pi_n(L_n \in \mathring{A}) \leq \limsup \frac{1}{n} \ln\pi_n(L_n \in \bar{A}) \leq -\inf_{\nu \in \bar{A}} I(\nu),
\]
where $\mathring{A}$ denotes the interior and $\bar{A}$ the closure of $A$. Large deviations probabilities of Markov chains have been studied in a variety of settings and under different assumptions, see e.g.~\cite{de1990large,dembo2010large,donsker1976asymptotic,donsker1975asymptotic,jain1990large,ney1987markov}.

The way we generalize the Laplace principle is by using the fact that both sides of the Laplace principle \eqref{intro1} can be stated solely in terms of relative entropy, its chain rule, and its convex dual pair. Equation \eqref{intro1} can therefore be formulated analogously for functionals resembling the relative entropy, in the sense that these functionals have to satisfy the same type of chain rule and duality. The kind of convex duality referred to is Fenchel–Moreau duality, often studied in the context of convex risk measures, similar to our use for example in \cite{acciaio2011dynamic,bartl2016pointwise,cheridito2011composition,lacker2015law}.

The original idea for extensions of Laplace principles of this form is due to Lacker \cite{lacker2016non} who pursued this in the context of i.i.d.~sequences of random variables instead of Markov chains. The initial goal was to provide a setting to study more than just exponential tail behavior of random variables, as is given by large deviations theory. The extension of Sanov's theorem he proved \cite[Theorem 3.1]{lacker2016non} can be used to derive many interesting results, such as polynomial large deviations upper bounds, robust large deviations bounds, robust laws of large numbers, asymptotics of optimal transport problems, and more, while several possibilities remain unexplored.

In this paper, the same type of extension for Markov chains is obtained. To this end, we work in a similar setting as \cite[Chapter 8]{dupuis2011weak}. In particular, the results from \cite[Chapter 8]{dupuis2011weak} are a special case of Theorem \ref{mainth}.\footnote{Up to very minor differences with regard to the initial distribution: In this paper we work with arbitrary initial distributions, while \cite{dupuis2011weak} work with suprema over Dirac measures supported by a compact set.} To showcase the potential implications of Theorem \ref{mainth}, we focus on one broad application related to robust Markov chains, summarized in Theorem \ref{MainApplicationSum} and Theorem \ref{limitweakrobust}.

\subsection{Main Results}
Let $\beta : \mathcal{P}(E) \times \mathcal{P}(E) \rightarrow (-\infty,\infty]$ 
be a Borel measurable function which is bounded from below and satisfies $\beta(\nu,\nu) = 0$ for all $\nu \in \mathcal{P}(E)$. One may think of $\beta(\cdot,\cdot) = R(\cdot,\cdot)$. To state the chain rule, we introduce the following notation for the decomposition of an $n$-dimensional measure $\nu \in \mathcal{P}(E^n)$ into kernels $\nu_{i,i+1} : E^i \rightarrow \mathcal{P}(E)$ for $i = 1,...,n-1$ and $\nu_{0,1} \in \mathcal{P}(E)$:
\[
\nu(dx_1,...,dx_n) = \nu_{0,1}(dx_1) \prod_{i=1}^{n-1} \nu_{i,i+1}(x_1,...,x_i,dx_{i+1})
\]
For $\theta \in \mathcal{P}(E)$, define $\beta^{\theta}_n : \mathcal{P}(E^n) 
\rightarrow 
(-\infty,\infty]$ by 
\[
\beta^{\theta}_n(\nu) = \beta(\nu_{0,1},\theta) + \int_{E^n} \sum_{i=1}^{n-1} 
\beta(\nu_{i,i+1}(x_1,...,x_{i}),\pi(x_i)) \nu(dx_1,...,dx_n),
\]
where in case of $\beta(\cdot,\cdot) = R(\cdot,\cdot)$ one gets $\beta^{\pi_0}_n(\nu) = R(\nu,\pi_n)$ for $\nu \in \mathcal{P}(E^n)$ by the chain rule for relative entropy.
Note that $\beta_n^{\cdot}(\cdot)$ is well defined as the term inside the integral is Borel measurable, e.g.~by \cite[Prop.~7.27]{bertsekas2004stochastic}.
Define $\rho^{\theta}_n$ as the convex dual of $\beta_n^{\theta}$ by 
\[
\rho^{\theta}_n(f) = \sup_{\mu \in \mathcal{P}(E^n)} \left( \int_{E^n} f d 
\mu - 
\beta^{\theta}_n(\mu)\right)
\]
for Borel measurable functions $f: E^n \rightarrow \mathbb{R}$,
where we adopt the convention $\infty - \infty := -\infty$. For $\beta(\cdot,\cdot) = R(\cdot,\cdot)$ we get $\rho_n^{\pi_0}(f) = \ln \int_{E^n} \exp(f) d\pi_n$ by the Donsker-Varadhan variational formula for the relative entropy.
In the above definitions, $\theta$ is a placeholder for variable initial distributions, which is
required as a tool in the proof. For the actual statement, only 
$\beta_n^{\pi_0}$ and $\rho_n := 
\rho^{\pi_0}_n$ are needed.
We write $\rho := \rho_1$ and $\rho^{\theta} := \rho^{\theta}_1$.

The assumptions for the main theorem are stated below. Assumption (M) is \cite[Condition 8.4.1.]{dupuis2011weak}, and (T) is a direct generalization of \cite[Condition 8.2.2.]{dupuis2011weak}.
\begin{itemize}
	\item[(M)] Conditions on the Markov chain.
	\begin{itemize}
		\item[(M.1)] Define the $k$-step transition kernel $\pi^{(k)}$ of the 
		Markov chain recursively by $\pi^{(k)}(x,A) := \int_{E}\pi(y,A) 
		\pi^{(k-1)}(x,dy)$ for $x\in E$ and Borel sets $A \subseteq E$.
		
		Assume that there exist $l_0,n_0 \in \mathbb{N}$ such that for all 
		$x,y \in E$:
		\[
		\sum_{i = l_0}^{\infty} \frac{1}{2^i} \pi^{(i)}(x) \ll \sum_{j= 
			n_0}^{\infty} \frac{1}{2^j}\pi^{(j)}(y)
		\]
		\item[(M.2)] $\pi$ has an invariant measure, i.e.~there exists $\mu^* 
		\in \mathcal{P}(E)$ such that $\mu^* \pi = \mu^*$.
	\end{itemize}
	\item[(B)] Assumptions on $\beta$.
	\begin{itemize}
		\item[(B.1)] The mapping 
		$\mathcal{P}(E)\times\mathcal{P}(E^2)\ni(\theta,\mu) \mapsto 
		\beta^{\theta}_2(\mu)$ 
		is convex.
		\item[(B.2)] The mapping 
		$\mathcal{P}(E)\times\mathcal{P}(E^2)\ni(\theta,\mu) \mapsto 
		\beta^{\theta}_2(\mu)$ 
		is lower semi-continuous.
		\item[(B.3)] If $\nu$ is not absolutely continuous with respect to 
		$\mu$, then $\beta(\nu,\mu) = \infty$.
	\end{itemize}
	\item[(T)] Assumption needed to guarantee tightness of certain families of random variables. At least one of the following has to hold:
	\begin{itemize}
		\item[(T.1)] There exists a Borel measurable function $U : E \rightarrow 
		[0,\infty)$ such that the following holds:
		\begin{itemize}
			\item[(a)] $\inf_{x\in E} (U(x) - \rho^{\pi(x)}(U)) > -\infty$.
			\item[(b)] $\{ x \in E : U(x) - 
			\rho^{\pi(x)}(U) \leq M \}$ is a relatively compact subset of 
			$E$ for all $M \in \mathbb{R}$.
			%\item[(c)] $U$ is bounded above on every compact subset of $E$.
			\item[(c)] $\rho(U) < \infty$.
		\end{itemize}
		\item[(T.1')] E is compact.
	\end{itemize}
\end{itemize}
In case of $\beta(\cdot,\cdot) = R(\cdot,\cdot)$, one usually imposes another condition on $\pi$ in the form of the Feller property, i.e.~continuity of $x\mapsto\pi(x)$, see e.g.~\cite[Condition 8.3.1]{dupuis2011weak}. Here, this is implicitly included in condition (B.2). Indeed, one quickly checks that for (B.2) to hold in case of $\beta(\cdot,\cdot) = R(\cdot,\cdot)$, the following is sufficient: If $\theta_n \stackrel{w}{\rightarrow} \theta \in \mathcal{P}(E)$, then $\theta_n \otimes \pi \stackrel{w}{\rightarrow} \theta \otimes \pi \in \mathcal{P}(E^2)$ has to hold as well. The Feller property implies this, see \cite[Lemma 8.3.2.]{dupuis2011weak}.

The following extension of the Laplace principle for empirical measures of a Markov chain is the main result.
\begin{theorem}
	\label{mainth}
	Define the rate function $I: \mathcal{P}(E) \rightarrow (-\infty,\infty]$ by \begin{align}
	\label{eq::RateFunction}
	I(\nu) := \inf_{q : \nu q = \nu} \int_E \beta(q(x),\pi(x)) \nu(dx) = \inf_{q : \nu q = \nu} \beta_2^{\nu}(\nu\otimes q).
	\end{align}
	
	Under condition (B.1), (B.2) and 
	(T), the upper bound
	\[
	\limsup_{n \rightarrow \infty} \frac{1}{n} \rho_n(n F\circ L_n) \leq 
	\sup_{\nu \in \mathcal{P}(E)}\left( F(\nu) - I(\nu)\right)
	\]
	holds for all upper semi-continuous and bounded functions $F : \mathcal{P}(E) \rightarrow \mathbb{R}$.
	
	Under condition  (M.1), (M.2), (B.1) and (B.3), the lower bound
	\[
	\liminf_{n \rightarrow \infty} \frac{1}{n} \rho_n(n F\circ L_n) \geq 
	\sup_{\nu \in \mathcal{P}(E)}\left( F(\nu) - I(\nu)\right)
	\]
	holds for all $F \in C_b(\mathcal{P}(E))$.
\end{theorem}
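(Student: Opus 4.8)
The plan is to follow the weak convergence approach of Dupuis--Ellis, exploiting that by the very definition of $\rho_n$ as the convex conjugate of $\beta^{\pi_0}_n$ one has the \emph{exact} variational representation
\[
\frac{1}{n}\rho_n(nF\circ L_n) = \sup_{\mu\in\mathcal{P}(E^n)}\left(\int_{E^n} F\circ L_n\, d\mu - \frac{1}{n}\beta^{\pi_0}_n(\mu)\right),
\]
so that, unlike in the classical proof, no separate variational formula for the entropy is needed. Writing $X_1,\dots,X_n$ for the coordinate process on $(E^n,\mu)$ and decomposing $\mu$ into its kernels $\mu_{i,i+1}$, the cost becomes $\tfrac1n\beta(\mu_{0,1},\pi_0)+\int_{E^n}\big(\int_{E\times\mathcal{P}(E)}\beta(q,\pi(x))\,R(dx,dq)\big)\,d\mu$ for the random control-occupation measure
\[
R := \tfrac1n\sum_{i=1}^{n-1}\delta_{(X_i,\,\mu_{i,i+1}(X_1,\dots,X_i))}+\tfrac1n\delta_{(X_n,\pi(X_n))}\in\mathcal{P}(E\times\mathcal{P}(E)),
\]
whose first marginal $[R]_1$ equals $L_n$ exactly and for which the added last summand is free because $\beta(\pi(x),\pi(x))=0$; since $\beta$ is bounded below, $-\tfrac1n\beta(\mu_{0,1},\pi_0)\to 0$ uniformly over the relevant competitors. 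Both bounds are obtained by passing to the limit in this representation.

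\textbf{Upper bound.} Fix a bounded upper semi-continuous $F$, pick $\varepsilon$-optimal controls $\mu^n$, and form the associated $R^n$. First I would prove tightness of the laws of $R^n$ on $\mathcal{P}(\mathcal{P}(E\times\mathcal{P}(E)))$: this is where (T) enters, either trivially when $E$ is compact, or under (T.1) via the estimate $\beta(\mu^n_{i,i+1}(\cdot),\pi(X_i))\ge\int U\,d\mu^n_{i,i+1}(\cdot)-\rho^{\pi(X_i)}(U)$ obtained directly from the definition of $\rho^{\pi(x)}$, which together with (T.1)(a)--(c), the a priori cost bound from $\varepsilon$-optimality and the trivial lower bound $\tfrac1n\rho_n(nF\circ L_n)\ge -\|F\|_\infty$ (test at $\mu=\pi_n$), forces the mean measures of $R^n$ to be tight; the standard criterion for tightness of random measures then applies. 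Along a subsequence realising the $\limsup$ the laws of $R^n$ converge weakly to some $\mathbf{P}$. A martingale-difference computation — using that the $\mu^n$-conditional law of $X_{i+1}$ given $(X_1,\dots,X_i)$ is $\mu^n_{i,i+1}(X_1,\dots,X_i)$ — shows $\int g\,d[R^n]_1-\int\!\!\int g\,dq\,R^n(dx,dq)\to 0$ in $L^2(\mu^n)$ for every $g\in C_b(E)$, so that $\mathbf{P}$ is carried by those $\Gamma$ whose first marginal equals the barycenter kernel, i.e.\ $\nu^\Gamma:=[\Gamma]_1$ is invariant for $q^\Gamma(x):=\int q\,\widehat{\Gamma}_x(dq)$ (the disintegration of $\Gamma$ over $E$). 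For the cost I would observe that $(x,m)\mapsto\beta(m,\pi(x))=\beta^{\delta_x}_2(\delta_x\otimes m)$ is the composition of the continuous map $(x,m)\mapsto(\delta_x,\delta_x\otimes m)$ with $\beta^{\cdot}_2(\cdot)$, hence lower semi-continuous by (B.2) and bounded below, so $\Gamma\mapsto\int\beta(q,\pi(x))\,\Gamma(dx,dq)$ is lower semi-continuous; moreover, convexity of $m\mapsto\beta(m,\pi(x))$ from (B.1) and Jensen's inequality give $\int\beta(q,\pi(x))\,\Gamma(dx,dq)\ge\int\beta(q^\Gamma(x),\pi(x))\,\nu^\Gamma(dx)\ge I(\nu^\Gamma)$ whenever the invariance constraint holds. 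Thus $\Gamma\mapsto F([\Gamma]_1)-\int\beta(q,\pi(x))\,\Gamma(dx,dq)$ is upper semi-continuous and bounded above, and by the portmanteau theorem
\[
\limsup_n\frac1n\rho_n(nF\circ L_n)\le\varepsilon+\int\Big(F(\nu^\Gamma)-\!\!\int\!\beta(q,\pi(x))\,\Gamma(dx,dq)\Big)\,\mathbf{P}(d\Gamma)\le\varepsilon+\sup_{\nu}\big(F(\nu)-I(\nu)\big);
\]
let $\varepsilon\downarrow 0$.

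\textbf{Lower bound.} It suffices to show, for each $\nu$ with $I(\nu)<\infty$ and each $\varepsilon>0$, that $\liminf_n\tfrac1n\rho_n(nF\circ L_n)\ge F(\nu)-I(\nu)-\varepsilon$, since testing the representation at a single $\mu^n$ gives $\tfrac1n\rho_n(nF\circ L_n)\ge\int F\circ L_n\,d\mu^n-\tfrac1n\beta^{\pi_0}_n(\mu^n)$. Choose a kernel $q$ with $\nu q=\nu$ and $\int\beta(q(x),\pi(x))\,\nu(dx)\le I(\nu)+\varepsilon$; by (B.3) this forces $q(x)\ll\pi(x)$ for $\nu$-a.e.\ $x$. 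The candidate is $\mu^n=$ law of the stationary Markov chain with initial law $\nu$ and transition kernel $q$, for which the chain rule yields $\tfrac1n\beta^{\pi_0}_n(\mu^n)=\tfrac1n\beta(\nu,\pi_0)+\tfrac{n-1}{n}\int\beta(q(x),\pi(x))\,\nu(dx)$ and $L_n\to\nu$ almost surely \emph{provided $(\nu,q)$ is ergodic}. To arrange ergodicity while keeping the cost controlled, I would replace $q$ by a perturbation $q^\delta$ mixing $q$ with a kernel built from the iterated kernels $\pi^{(k)}$ and the invariant measure $\mu^*$: conditions (M.1) and (M.2) are exactly what make such a $q^\delta$ an ergodic kernel with an invariant measure $\nu^\delta\to\nu$ satisfying $q^\delta(x)\ll\pi(x)$ $\nu^\delta$-a.e., while convexity (B.1) gives $\int\beta(q^\delta(x),\pi(x))\,\nu^\delta(dx)\le I(\nu)+2\varepsilon$ for $\delta$ small; the term $\tfrac1n\beta(\nu^\delta,\pi_0)$ is handled by first running the original chain for finitely many (free) steps and invoking (M.1) and (B.3) to make the initial segment absolutely continuous with respect to $\pi_0$. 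Birkhoff's theorem then gives $L_n\to\nu^\delta$ a.s.\ under the resulting $\mu^n$, and since $F\in C_b(\mathcal{P}(E))$, $\liminf_n\tfrac1n\rho_n(nF\circ L_n)\ge F(\nu^\delta)-\int\beta(q^\delta,\pi)\,d\nu^\delta\to F(\nu)-I(\nu)-2\varepsilon$; finally let $\varepsilon\downarrow 0$ and take the supremum over $\nu$.

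\textbf{Main obstacle.} For the upper bound the delicate points are the tightness estimate under (T.1) and the persistence of the invariance constraint and of the inequality $\int\beta(q,\pi(x))\,\Gamma(dx,dq)\ge I(\nu^\Gamma)$ under the weak limit — both being routine once (B.1)/(B.2) are exploited as above. The genuinely hard part is the ergodicity-by-perturbation step in the lower bound: one must perturb the near-optimal kernel $q$ so as to simultaneously make the controlled chain ergodic, not increase the $\beta$-cost by more than $\varepsilon$, and keep $q^\delta(x)\ll\pi(x)$ so that (B.3) does not send the cost to $+\infty$. This is precisely where the irreducibility assumption (M.1) and the existence of an invariant measure (M.2) are indispensable.
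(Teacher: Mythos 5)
Your upper bound is sound and is essentially the paper's weak-convergence argument in a mild variant: the paper forms the pair empirical measure $\gamma_{n-1}=\tfrac1{n-1}\sum_i\delta_{X_{n,i}}\otimes\mu^{(n)}_{i,i+1}$ on $E\times E$, pushes the Ces\`aro average inside $\beta_2^{\cdot}(\cdot)$ via the \emph{joint} convexity (B.1) and passes to the limit via the joint lower semicontinuity (B.2) (Theorem \ref{Tool1}), whereas you keep an occupation measure on $E\times\mathcal{P}(E)$, use (B.2) only through lower semicontinuity of $(x,m)\mapsto\beta(m,\pi(x))=\beta_2^{\delta_x}(\delta_x\otimes m)$, and (B.1) only through Jensen at the limit; your tightness estimate under (T.1) is exactly the paper's. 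Up to routine omissions (the countable-class argument making the invariance identity hold a.s.\ simultaneously for all $g\in C_b(E)$, measurability of the barycenter kernel), this works.

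The genuine gap is in the lower bound, in the treatment of the initial distribution. You propose a single competitor: run the free dynamics $(\pi_0,\pi)$ for finitely many steps, then switch to the perturbed kernel $q^\delta$, and invoke Birkhoff. Plugging this into the variational formula requires controlling $\tfrac1n\beta_n^{\pi_0}(\mu^{n})$, which is an \emph{expectation} of the running cost along a \emph{non-stationary} chain started from $\pi_0\pi^{(m)}$. Almost-sure (or $L^1$) ergodic convergence for the chain with initial law $\nu^\delta$ does not bound these expectations: $\pi_0\pi^{(m)}$ need not be dominated by any multiple of $\nu^\delta$, and $\beta(q^\delta(x),\pi(x))$ may be $+\infty$ off a $\nu^\delta$-null set, so already the switching step can have infinite expected cost. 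Note also that (B.3) gives no finiteness — absolute continuity does not imply $\beta<\infty$; the only choices with guaranteed zero cost are $\pi_0$ and $\pi$ themselves, so one cannot ``make the initial segment absolutely continuous with respect to $\pi_0$'' at controlled cost. This is precisely why the paper does not construct one competitor: it proves the dynamic-programming identity $\rho_n(f)=\rho_{l_0}\bigl(\rho_{n-l_0}^{\pi(x_{l_0})}(f(x_1,\dots,x_{l_0},\cdot))\bigr)$ (Lemma \ref{ughselection}, requiring measurable selection and upper semi-analytic functions), the inequality $\rho_n^{\theta}(f)\geq\int_E\rho_n^{\delta_x}(f)\,\theta(dx)$ under (B.3) (Lemma \ref{allpi0lower}), extracts a subsequence along which the ergodic limits hold for the Dirac-started chain for all $x$ in a set $\Phi$ with $\mu^*(\Phi)=1$, uses Lemma \ref{862b}/(M.1) to get $\pi^{(l_0)}(y,\Phi)=1$ for every $y$, and finishes with Fatou and monotonicity of $\rho_{l_0}$ — thereby never needing expected costs of a non-stationarily started chain. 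A second, smaller flaw: mixing $q$ with a kernel built from the iterated kernels $\pi^{(k)}$, $k\ge2$, clashes with (B.3), since $\pi^{(k)}(x)\not\ll\pi(x)$ in general; the paper instead mixes the joint law $\nu_0\otimes p_0$ with $\mu^*\otimes\pi$ (so the new invariant measure $\nu_1$ is explicit) and needs Lemma \ref{862c} ($\nu_1\ll\mu^*$, hence $d\mu^*/d\nu_1>0$ a.e.) to obtain pointwise equivalence $p_1\sim\pi$, which is what transfers (M.1) and yields ergodicity. Your sketch omits both points, and they are where the real work of the lower bound lies.
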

Intuition, applicability and difficulties in dealing with the above result are 
very similar to the i.i.d.~case and are described in detail in the introduction 
of \cite{lacker2016non}. The main differences for Markov chains are conditions (B.1) and (B.2). 
To verify these conditions, one would ideally like to have a 
better expression for $\beta_2^{\cdot}(\cdot)$ than is given by the definition, which is often not trivial. In the applications of this paper, the choices of $\beta$ are convenient in 
this regard. Some of the applications pursued in the i.i.d.~case, e.g.~\cite[Chapter 
4 and 6]{lacker2016non} appear more difficult to obtain for Markov chains. A 
thorough analysis of the spectrum of applications of Theorem \ref{mainth} is 
left open for now, as the goal in this regard is rather to give a detailed 
account of the applications to robust Markov chains.

The following corollary complements Theorem \ref{mainth}.

\begin{corollary}
	\label{cor1}
	\begin{itemize}
		\item[(a)] If $\beta_2^{\cdot}(\cdot)$ is lower semi-continuous, then $I$ is lower semi-continuous. If $\beta_2^{\cdot}(\cdot)$ is convex, then $I$ is convex.
		\item[(b)] 	If the main Theorem \ref{mainth} upper bound holds, and additionally $I$ has compact sub-level sets, 
		then the main theorem upper bound extends to all functions $F: \mathcal{P}(E) \rightarrow [-\infty,\infty)$ which are upper semi-continuous and bounded from above.
	\end{itemize}
\end{corollary}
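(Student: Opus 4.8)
The plan is to treat the two parts separately. For part (a), the key observation is that the rate function $I(\nu) = \inf_{q:\nu q = \nu} \beta_2^{\nu}(\nu\otimes q)$ is obtained from $\beta_2^{\cdot}(\cdot)$ by a partial minimization over the kernel $q$, subject to the linear constraint $\nu q = \nu$. For lower semi-continuity, I would take a sequence $\nu_k \stackrel{w}{\to}\nu$ with $\liminf_k I(\nu_k) < \infty$ (otherwise nothing to prove), pass to a subsequence realizing the liminf, and pick near-optimal kernels $q_k$ with $\nu_k q_k = \nu_k$ and $\beta_2^{\nu_k}(\nu_k\otimes q_k) \le I(\nu_k) + 1/k$. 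The measures $\nu_k\otimes q_k$ all have first marginal $\nu_k$, hence are tight, so along a further subsequence $\nu_k\otimes q_k \stackrel{w}{\to}\mu$ for some $\mu\in\mathcal{P}(E^2)$ whose first marginal is $\nu$; write $\mu = \nu\otimes q$. The constraint $\nu q = \nu$ is preserved in the limit because both marginals converge and they agree along the sequence (this uses that $\pi_1\mu = \pi_2\mu$ is a weakly closed condition, where $\pi_i$ denotes the $i$-th marginal). Then lower semi-continuity of $\beta_2^{\cdot}(\cdot)$ at $(\nu,\mu)$ gives $I(\nu)\le \beta_2^{\nu}(\nu\otimes q) \le \liminf_k \beta_2^{\nu_k}(\nu_k\otimes q_k) = \liminf_k I(\nu_k)$, as desired. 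For convexity, given $\nu^0,\nu^1$ with finite $I$-values and $\lambda\in[0,1]$, pick near-optimal kernels $q^0,q^1$; the mixture $\mu_\lambda := \lambda(\nu^0\otimes q^0) + (1-\lambda)(\nu^1\otimes q^1)$ has first marginal $\nu_\lambda := \lambda\nu^0 + (1-\lambda)\nu^1$, and decomposing $\mu_\lambda = \nu_\lambda\otimes q_\lambda$ one checks $\nu_\lambda q_\lambda = \nu_\lambda$ since the invariance constraint is preserved under mixing. Convexity of $\beta_2^{\cdot}(\cdot)$ applied to the pairs $(\nu^0,\nu^0\otimes q^0)$ and $(\nu^1,\nu^1\otimes q^1)$ then yields $I(\nu_\lambda)\le \beta_2^{\nu_\lambda}(\mu_\lambda)\le \lambda\beta_2^{\nu^0}(\nu^0\otimes q^0) + (1-\lambda)\beta_2^{\nu^1}(\nu^1\otimes q^1)$, and letting the kernels approach optimality gives $I(\nu_\lambda)\le\lambda I(\nu^0) + (1-\lambda)I(\nu^1)$.

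For part (b), I would argue by approximation from above. Let $F:\mathcal{P}(E)\to[-\infty,\infty)$ be upper semi-continuous and bounded above. Since $\mathcal{P}(E)$ is metrizable, $F$ is the pointwise decreasing limit of the bounded continuous (in fact Lipschitz) functions
\[
F_k(\nu) := \sup_{\mu\in\mathcal{P}(E)}\bigl( F(\mu) - k\, d_{\mathcal P}(\nu,\mu)\bigr),
\]
where $d_{\mathcal P}$ is a bounded metric inducing the weak topology; each $F_k$ is bounded (by the same upper bound and below by $\sup F - $ nothing is needed, but one truncates below if necessary to make $F_k$ bounded — since $F$ is bounded above, $F_k$ is bounded above, and $F_k \ge F_1$ is bounded below on the set where $F > -\infty$; if $F\equiv -\infty$ the claim is trivial). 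Apply the already-established upper bound to each $F_k\in C_b(\mathcal P(E))$:
\[
\limsup_{n\to\infty}\frac1n\rho_n(nF\circ L_n) \le \limsup_{n\to\infty}\frac1n\rho_n(nF_k\circ L_n) \le \sup_{\nu}\bigl(F_k(\nu) - I(\nu)\bigr),
\]
using monotonicity of $\rho_n$ (immediate from its definition as a supremum over $\mu$ of $\int f\,d\mu - \beta_n^{\pi_0}(\mu)$). It remains to send $k\to\infty$ in the right-hand side: the claim is $\inf_k \sup_\nu (F_k(\nu) - I(\nu)) = \sup_\nu(F(\nu) - I(\nu))$. The inequality $\ge$ is clear since $F_k\ge F$. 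For $\le$, here is where compactness of the sub-level sets of $I$ enters: fix $\varepsilon>0$ and for each $k$ pick $\nu_k$ with $F_k(\nu_k) - I(\nu_k) \ge \sup_\nu(F_k(\nu)-I(\nu)) - \varepsilon$; since $F_k\le F_1$ is bounded above, the $I(\nu_k)$ are bounded, so by compact sub-level sets a subsequence $\nu_k\to\nu_*$ converges. Using lower semi-continuity of $I$ (part (a), or already assumed) and the fact that $F_k(\nu_k)\to F(\nu_*)$ — which follows from the standard Moreau–Yosida estimate $F(\nu_*) \ge \limsup_k F_k(\nu_k)$ together with upper semi-continuity of $F$ and $F_k \ge F$ — one gets $\limsup_k(F_k(\nu_k) - I(\nu_k)) \le F(\nu_*) - I(\nu_*) \le \sup_\nu(F(\nu)-I(\nu))$, which closes the argument.

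The main obstacle, and the only genuinely delicate point, is the limiting argument for $F_k(\nu_k)\to F(\nu)$ in part (b): one must combine the Moreau envelope inequalities with upper semi-continuity of $F$ carefully, since $F$ may take the value $-\infty$, and one must ensure the selected minimizing sequence $\nu_k$ stays in a compact set — which is exactly what the compact sub-level set hypothesis on $I$ provides, after noting $F_k$ is bounded above. In part (a), the corresponding subtlety is checking that tightness of $\{\nu_k\otimes q_k\}$ and weak closedness of the constraint set $\{q : \nu q = \nu\}$ (as a subset of couplings) survive passage to the limit; both are routine once one works with the coupling $\nu\otimes q$ rather than the kernel $q$ directly.
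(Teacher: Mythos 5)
Your proof is correct, and part (a) follows the paper's argument essentially verbatim: near-optimal kernels, tightness of the couplings $\nu_k\otimes q_k$, passage to a weak limit whose marginals coincide, and lower semi-continuity (resp.\ convexity) of $\beta_2^{\cdot}(\cdot)$. One small imprecision there: tightness of the first marginals alone does not give tightness of the couplings on a non-compact $E$; you need that the second marginal of $\nu_k\otimes q_k$ is again $\nu_k$, which the invariance $\nu_k q_k=\nu_k$ provides, exactly as in the paper's use of Lemma \ref{tupeltight}. In part (b) you take a genuinely different approximation route: you regularize $F$ from above by the Lipschitz (Moreau--Yosida) envelopes $F_k(\nu)=\sup_{\mu}\bigl(F(\mu)-k\,d_{\mathcal P}(\nu,\mu)\bigr)$, which are continuous and bounded once $d_{\mathcal P}$ is bounded and $F\not\equiv-\infty$, whereas the paper simply truncates from below, $F_m:=-m\vee F$; the truncation is already upper semi-continuous and bounded and hence directly covered by the Theorem \ref{mainth} upper bound, so no continuity of the approximants is needed and the extra Moreau--Yosida machinery (bounded metric, $\inf_k F_k=F$ for upper semi-continuous $F$, the estimate $\limsup_k F_k(\nu_k)\le F(\nu_*)$ along $\nu_k\to\nu_*$) can be dispensed with — your version works, but does more than is necessary. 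The limiting argument itself is identical in both proofs: pick near-maximizers $\nu_k$, observe that $I(\nu_k)$ is uniformly bounded because the approximants are uniformly bounded above, extract a convergent subsequence via the compact sub-level sets of $I$, and conclude by upper semi-continuity of $F$ and lower semi-continuity of $I$; note that the latter comes for free from compactness (hence closedness) of the sub-level sets, not from part (a), whose hypothesis (lower semi-continuity of $\beta_2^{\cdot}(\cdot)$) is not assumed in (b).
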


\subsection{Applications to robust Markov chains}
\label{subsec::Applicationsto}
In this paper, robustness broadly refers to uncertainty about the correct model specification of the Markov chain. This type of uncertainty is often studied in terms of nonlinear expectations (see e.g.~\cite{cerreia2016ergodic,lan2017strong,peng2009survey,peng2010nonlinear}) and distributional robustness (see e.g.~\cite{blanchet2016quantifying,esfahani2015data,gao2016distributionally,hanasusanto2015distributionally}). Here, the main point is to take uncertainty with respect to the transition kernel $\pi$ into consideration. Conceptually, a robust transition kernel is the following: If the Markov chain is in point $x\in E$, the next step of the Markov chain is not necessarily determined by a fixed measure $\pi(x)$, but rather can be determined by any measure $\hat{\pi} \in P(x) \subseteq \mathcal{P}(E)$. In our context, $P(x)$ will be defined as a neighborhood of $\pi(x)$ with respect to the first Wasserstein distance.

The existing literature on robust Markov chains focuses on finite state spaces, where transition probabilities are uncertain in some convex and closed sets, usually expressed via matrix intervals. For example \cite{vskulj2009discrete} gives a good overview of the field. These are studied under the names of Markov set chains (see e.g.~\cite{hartfiel1994theory,hartfiel2006markov,kurano1998controlled}), imprecise Markov chains (see e.g.~\cite{de2009imprecise}), as well as Markov chains with interval probabilities (see e.g.~\cite{vskulj2006finite,vskulj2009discrete}). While different types of laws of large numbers are studied frequently, large deviations theory seems to be absent in the current literature on robust Markov chains.

In the following, the asymptotic behavior of such Markov chains is analyzed. The type of asymptotics studied are worst case behaviors over all possible distributions, in the sense of large deviations probabilities (Theorem \ref{MainApplicationSum}) and a law of large numbers (Theorem \ref{limitweakrobust}) of empirical measures of robust Markov chains. Worst case behavior for large deviations means that the slowest possible rate of convergence to zero of a tail event is identified. For laws of large numbers, we give upper bounds - or by changing signs lower bounds - for law of large number type limits.

Define the first Wasserstein distance $d_W$ on $\mathcal{P}(E)$ by
\[d_W(\mu,\nu) = \inf_{\tau \in \Pi(\mu,\nu)} \int_E d(x,y)\tau(dx,dy)\]
for $\mu,\nu \in \mathcal{P}(E)$, where $\Pi(\mu,\nu) \subseteq \mathcal{P}(E^2)$ denotes the set of measures with first marginal $\mu$ and second marginal $\nu$.
See for example \cite{gibbs2002choosing} for an overview regarding the Wasserstein distance. In order to avoid complications with respect to compatibility of weak convergence and Wasserstein distance, we assume that $E$ is compact for the applications.

Fix $r \geq 0$. The set of possible joint distributions of the robust Markov chain up to step $n$ is characterized by $M_n(\pi_0) \subseteq \mathcal{P}(E^n)$ defined by
\begin{align*}
M_n(\pi_0) :=& \{\nu \in \mathcal{P}(E^n) : d_W(\nu_{0,1},\pi_0)\leq r \text{ and } d_W(\nu_{i,i+1}(x_1,...,x_i),\pi(x_i)) \leq r
~\nu\text{-a.s. for } 
i=1,...,n-1\}.
\end{align*}
For technical reasons related to condition (B.3), we also consider the following modification
\begin{align*}
\underline{M}_n(\pi_0) :=& \{ \nu \in M_n(\pi_0) : \nu \ll \pi_0\otimes \pi \otimes ... \otimes \pi \}.
\end{align*}
Both definitions above can of course be stated for arbitrary $\theta \in \mathcal{P}(E)$ instead of $\pi_0$.
We show that
\[
\beta(\nu,\mu) := \inf_{\hat{\mu} \in M_1(\mu)} R(\nu,\hat{\mu})
\]
satisfies the assumptions for the upper bound of Theorem \ref{mainth} and
\[
\underline{\beta}(\nu,\mu) := \inf_{\hat{\mu} \in \underline{M}_1(\mu)} R(\nu,\hat{\mu})
\]
satisfies the assumptions for the lower bound of Theorem \ref{mainth}. In Lemma \ref{robustentropyselection} and Lemma \ref{robustentropyselection2} we will characterize $\beta_n^\theta$ and $\underline{\beta}_n^\theta$ in terms of $M_n(\theta)$ and $\underline{M}_n(\theta)$.
Theorem \ref{mainth} yields the following:
\begin{theorem}
	\label{MainApplicationSum}
	Assume $(E,d)$ is compact. Let $\beta$, $\underline{\beta} $ and $M_n(\theta)$, $\underline{M}_n(\theta)$ for $\theta \in \mathcal{P}(E)$ be given as above. Let $I$ and $\underline{I}$ denote the rate functions for $\beta$ and $\underline{\beta}$, as given by equation \eqref{eq::RateFunction}.
	
	\begin{itemize}
		\item[(a)] If $\pi$ satisfies the Feller property, it holds for Borel sets $A \subseteq \mathcal{P}(E)$
		\[
		\limsup_{n\rightarrow \infty} \sup_{\mu \in M_n(\pi_0)} \frac{1}{n} \ln \mu(L_n \in \bar{A}) \leq -\inf_{\nu \in \bar{A}} I(\nu).
		\]
		\item[(b)] If $\pi$ satisfies (M), it holds for Borel sets $A \subseteq \mathcal{P}(E)$
		\[\liminf_{n\rightarrow \infty} \sup_{\mu \in \underline{M}_n(\pi_0)} \frac{1}{n} \ln \mu(L_n \in \mathring{A}) \geq -\inf_{\nu \in \mathring{A}} \underline{I}(\nu).\]
	\end{itemize}
\end{theorem}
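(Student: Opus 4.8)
The plan is to deduce Theorem \ref{MainApplicationSum} from Theorem \ref{mainth} applied to the two robustified entropy functionals $\beta$ and $\underline{\beta}$. The conceptual bridge is the characterization of $\beta_n^\theta$ and $\underline{\beta}_n^\theta$ announced as Lemma \ref{robustentropyselection} and Lemma \ref{robustentropyselection2}: namely that $\beta_n^\theta(\nu) = \inf_{\hat{\mu} \in M_n(\theta)} R(\nu,\hat{\mu})$ (and analogously with underlines), so that $\rho_n^{\theta}(f) = \sup_{\hat{\mu}\in M_n(\theta)} \ln\int \exp(f)\,d\hat{\mu}$ by combining the Donsker--Varadhan formula with the fact that an infimum of relative entropies dualizes to a supremum of cumulant generating functionals. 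First I would record, using that $\beta$ is an infimum over $R(\nu,\hat\mu)$ with $\hat\mu$ ranging over a nonempty set containing $\mu$ (so $\beta(\nu,\nu)\le R(\nu,\nu)=0$ and $\beta\ge 0$ since $R\ge 0$) and that $\hat\mu \ll$ something, that $\beta$ and $\underline\beta$ meet the standing hypotheses on $\beta$; compactness of $E$ supplies (T.1$'$), and (B.3) holds for $\underline\beta$ by the absolute-continuity constraint built into $\underline M_1$. The properties (B.1) and (B.2) — convexity and lower semicontinuity of $(\theta,\mu)\mapsto\beta_2^\theta(\mu)$ — are exactly what Lemma \ref{robustentropyselection} is designed to make checkable: once $\beta_2^\theta(\mu) = \inf_{\hat\mu\in M_2(\theta)} R(\mu,\hat\mu)$, convexity follows because $R$ is jointly convex and $M_2(\theta)$ depends on $\theta$ in a way that makes $\{(\theta,\mu,\hat\mu): \hat\mu\in M_2(\theta)\}$ convex (the Wasserstein balls are convex and the partial-marginal/a.s.\ constraints are linear), and lower semicontinuity follows from joint lower semicontinuity of $R$ together with a compactness argument (on compact $E$, $\mathcal P(E^2)$ is compact and $M_2(\theta)$ varies upper-hemicontinuously, so the infimum is attained and l.s.c.).

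With those verifications in hand, Theorem \ref{mainth} gives, for part (a), the Laplace upper bound $\limsup_n \tfrac1n \rho_n(nF\circ L_n) \le \sup_\nu (F(\nu)-I(\nu))$ for bounded upper semicontinuous $F$, where $\rho_n = \rho_n^{\pi_0}$. The next step is the translation from Laplace form to the set form: I would use the standard weak-convergence-approach argument (as in \cite{dupuis2011weak}) that a Laplace upper bound for bounded u.s.c.\ $F$ yields the large deviations upper bound over closed sets, applied pointwise in the supremum. Concretely, for a closed set $\bar A$ and $\mu\in M_n(\pi_0)$ one has $\mu(L_n\in\bar A) \le \int \exp(n F_\delta\circ L_n)\,d\mu$ for suitable u.s.c.\ approximations $F_\delta \downarrow -\infty\cdot\mathbf 1_{\bar A^c}$, and then $\sup_{\mu\in M_n(\pi_0)} \int \exp(nF_\delta\circ L_n)\,d\mu = \exp(\rho_n(nF_\delta\circ L_n))$ by the dual characterization of $\rho_n$ via $M_n(\pi_0)$; taking $\tfrac1n\ln$, letting $n\to\infty$, then $\delta\to 0$, and invoking lower semicontinuity/compact sublevel sets of $I$ (Corollary \ref{cor1}(a) plus compactness of $\mathcal P(E)$) gives the claimed bound $-\inf_{\nu\in\bar A} I(\nu)$. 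The Feller hypothesis in (a) is precisely what is needed, per the remark in the excerpt, to ensure (B.2) holds for $\beta = R$-type functionals, i.e.\ that $\theta_n \xrightarrow{w}\theta$ implies $\theta_n\otimes\pi \xrightarrow{w}\theta\otimes\pi$.

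For part (b) the argument is dual: Theorem \ref{mainth} under (M.1), (M.2), (B.1), (B.3) gives the Laplace lower bound $\liminf_n \tfrac1n\rho_n(nF\circ L_n) \ge \sup_\nu(F(\nu)-\underline I(\nu))$ for $F\in C_b(\mathcal P(E))$, with $\rho_n$ now the dual of $\underline\beta_n^{\pi_0}$, hence $\rho_n(g) = \sup_{\mu\in\underline M_n(\pi_0)}\ln\int\exp(g)\,d\mu$. For an open set $\mathring A$, pick $\nu\in\mathring A$ with $\underline I(\nu)$ near the infimum, choose $F\in C_b$ with $F\le 0$, $F(\nu)=0$, and $F$ supported near $\nu$ inside $\mathring A$; then $\int\exp(nF\circ L_n)\,d\mu \le \mu(L_n\in\mathring A) + e^{-n\epsilon}$-type estimates give $\sup_{\mu\in\underline M_n(\pi_0)}\tfrac1n\ln\mu(L_n\in\mathring A) \gtrsim \tfrac1n\rho_n(nF\circ L_n) - o(1)$, and the $\liminf$ is $\ge F(\nu)-\underline I(\nu) \ge -\underline I(\nu)-o(1)$; optimizing over $\nu$ and the support size yields $-\inf_{\nu\in\mathring A}\underline I(\nu)$. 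I expect the main obstacle to be the rigorous passage between $\rho_n$ and the worst-case probabilities $\sup_\mu \mu(L_n\in A)$ — that is, making precise that the convex-dual $\rho_n$ of the $M_n$-robustified entropy really equals $\sup_{\mu\in M_n(\pi_0)}\ln\int\exp(\cdot)\,d\mu$ including the correct handling of the a.s.-constraints and the $\underline M_n$ absolute-continuity restriction — which is exactly the content deferred to Lemmas \ref{robustentropyselection} and \ref{robustentropyselection2}; a secondary subtlety is ensuring the Wasserstein ball constraints interact well with weak convergence, which is why compactness of $E$ is imposed throughout.
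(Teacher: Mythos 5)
Your proposal follows essentially the same route as the paper: verify (B.1), (B.2) resp.\ (B.3) and (T.1') for $\beta$ and $\underline{\beta}$ via the representation Lemmas \ref{robustentropyselection} and \ref{robustentropyselection2} (which give $\rho_n(f)=\sup_{\hat{\mu}\in M_n(\pi_0)}\ln\int e^f d\hat{\mu}$ by Donsker--Varadhan) together with convexity and closedness of $M_2(\theta)$, then apply Theorem \ref{mainth} and Corollary \ref{cor1} with an indicator-type upper semi-continuous $F$ for the upper bound over closed sets, and run the standard Dupuis--Ellis conversion (a continuous $F$ equal to $0$ at $\nu$ and to a large negative constant off a small ball) for the lower bound over open sets, exactly as in Theorem \ref{robustupperbound} and Lemma \ref{ldplowerbound}. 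One minor clarification: the Feller property is needed here not through the remark about $\theta_n\otimes\pi\stackrel{w}{\rightarrow}\theta\otimes\pi$ for $\beta=R$, but to make the $\mu$-a.s.\ constraint $d_W(K(x),\pi(x))\leq r$ define a closed set $M_2(\theta)$ (Lemma \ref{M2closed}), which is what drives the lower semi-continuity of $\beta_2^{\cdot}(\cdot)$ in Lemma \ref{lscLDP}.
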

For a (numerical) illustration of the above result, see Example \ref{simpleexample}. Among other things, the example showcases that often, there is no difference between upper and lower bound, and thus the above identifies precise asymptotic rates. Note that in finite state spaces one can guarantee $M_n(\theta) = \underline{M}_n(\theta)$ by assuming $\pi(x)(y) > 0$ for all $x,y\in E$. 

The following is the law of large numbers result for robust Markov chains, which is based on the choices
\begin{align*}
\beta(\mu,\nu) &:= \left\{ \begin{array}{ll} 0,& \text{if } d_W(\mu,\nu)\leq r, \\ 
\infty,&
\text{else,} \end{array} \right.\\
\underline{\beta}(\mu,\nu) &:= \left\{ \begin{array}{ll} 0,& \text{if } d_W(\mu,\nu)\leq r \text{ and } \mu \ll \nu, \\ 
\infty,&
\text{else,} \end{array} \right.
\end{align*}
again for $r \geq 0$ fix.
\begin{theorem}
	\label{limitweakrobust}
	Assume $(E,d)$ is compact. Let $M_n(\theta),\underline{M}_n(\theta)$ for $\theta \in \mathcal{P}(E)$ be given as above.
	\begin{itemize}
		\item[(a)] If $\pi$ satisfies the Feller property, it holds for all $F: \mathcal{P}(E) \rightarrow [-\infty,\infty)$ which are upper semi-continuous and bounded from above
		\[
		\limsup_{n\rightarrow\infty} \sup_{\mu \in M_n(\pi_0)} \int_{E^n} F\circ L_n d\mu \leq \sup_{\substack{\nu \in \mathcal{P}(E) : \\ \exists q, \nu q = \nu: \nu \otimes q \in M_2(\nu)}} F(\nu).
		\]
		\item[(b)] If $\pi$ satisfies (M), it holds for all $F \in C_b(\mathcal{P}(E))$
		\[
		\liminf_{n\rightarrow \infty} \sup_{\mu \in \underline{M}_n(\pi_0)} \int_{E^n} F \circ L_n d\mu \geq \sup_{\substack{\nu \in \mathcal{P}(E) : \\ \exists q, \nu q = \nu: \nu \otimes q \in \underline{M}_2(\nu)}} F(\nu).
		\]
	\end{itemize}
\end{theorem}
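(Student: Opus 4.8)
The plan is to derive both parts from Theorem \ref{mainth} (together with Corollary \ref{cor1}) applied to the $\{0,\infty\}$-valued weights $\beta$ and $\underline\beta$ displayed above, after two elementary reformulations. First I would note that, since $\beta$ only takes the values $0$ and $\infty$, its definition directly gives $\beta_n^\theta(\nu)=0$ when $\nu\in M_n(\theta)$ and $\beta_n^\theta(\nu)=\infty$ otherwise; hence $\rho_n(f)=\sup_{\mu\in M_n(\pi_0)}\int_{E^n}f\,d\mu$, so that $\tfrac1n\rho_n(nF\circ L_n)=\sup_{\mu\in M_n(\pi_0)}\int_{E^n}F\circ L_n\,d\mu$, which is exactly the left-hand side in (a); the same identity holds with $\underline\beta$, $\underline M_n$, $\underline\rho_n$ replacing $\beta$, $M_n$, $\rho_n$ for (b). Second, from \eqref{eq::RateFunction} the rate function $I$ is then also $\{0,\infty\}$-valued, with $I(\nu)=0$ precisely when there is a stochastic kernel $q$ with $\nu q=\nu$ and $\nu\otimes q\in M_2(\nu)$ (because $\beta_2^\nu(\nu\otimes q)=\int_E\beta(q(x),\pi(x))\,\nu(dx)$ vanishes iff $d_W(q(x),\pi(x))\le r$ for $\nu$-a.e.\ $x$); consequently $\sup_\nu\bigl(F(\nu)-I(\nu)\bigr)=\sup\{F(\nu):\exists q,\ \nu q=\nu,\ \nu\otimes q\in M_2(\nu)\}$, which is exactly the right-hand side in (a), and analogously for $\underline I$ and the right-hand side in (b). With these identifications, (a) becomes the upper bound and (b) the lower bound of Theorem \ref{mainth}, so it remains to verify the respective hypotheses.

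For (a) I would check (B.1), (B.2) and (T). Condition (T.1') holds as $E$ is compact. Condition (B.1) is the convexity of the $\{0,\infty\}$-valued map $(\theta,\mu)\mapsto\beta_2^\theta(\mu)$, equivalently of the set $\{(\theta,\mu)\in\mathcal P(E)\times\mathcal P(E^2):\mu\in M_2(\theta)\}$: writing $\mu=\mu_{0,1}\otimes\mu_{1,2}$, a convex combination $\mu^t=(1-t)\mu^0+t\mu^1$ has first marginal $(1-t)\mu^0_{0,1}+t\mu^1_{0,1}$ and a disintegration kernel that is, $\mu^t_{0,1}$-a.s., a measurable convex combination of $\mu^0_{1,2}$ and $\mu^1_{1,2}$, so the claim follows from joint convexity of $d_W$ (with a short separate argument on the $\mu^t_{0,1}$-null sets where only one of $\mu^0_{0,1},\mu^1_{0,1}$ charges). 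Condition (B.2) is the closedness of the same set: the constraint $d_W(\mu_{0,1},\theta)\le r$ is closed for weak convergence since on compact $E$ the distance $d_W$ metrizes weak convergence, and the $\mu$-a.s.\ constraint $d_W(\mu_{1,2}(x_1),\pi(x_1))\le r$ is closed under weak convergence when $\pi$ has the Feller property --- this is where the Feller assumption of (a) enters, and it is dealt with as in the discussion following the list of assumptions, or by recasting the constraint as the existence of $\gamma\in\mathcal P(E^3)$ with $(1,2)$-marginal $\mu$, $(1,3)$-marginal $\mu_{0,1}\otimes\pi$ and $\int d(x_2,x_3)\,d\gamma\le r$ and passing to a weak subsequential limit. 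Granting (B.1), (B.2) and (T), Theorem \ref{mainth} yields the upper bound for bounded upper semi-continuous $F$; moreover Corollary \ref{cor1}(a) turns the lower semi-continuity of $\beta_2^{\cdot}(\cdot)$ into lower semi-continuity of $I$, so that its sub-level sets, which equal $\{I=0\}$ or $\emptyset$, are closed subsets of the compact space $\mathcal P(E)$ and hence compact, and Corollary \ref{cor1}(b) extends the bound to all upper semi-continuous $F:\mathcal P(E)\to[-\infty,\infty)$ bounded from above. Combined with the reformulations, this gives (a).

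For (b) I would check (M.1), (M.2), (B.1) and (B.3). Condition (M) is assumed. Condition (B.1) is the convexity of $\{(\theta,\mu):\mu\in\underline M_2(\theta)\}$, which follows as for $M_2$ once one observes that the extra constraint $\mu\ll\theta\otimes\pi$ is preserved under convex combinations (because $\theta^i\otimes\pi\ll\theta^t\otimes\pi$ for $t\in(0,1)$). Condition (B.3) is immediate from the definition of $\underline\beta$, since $\underline\beta(\nu,\mu)<\infty$ forces $\nu\ll\mu$. Hence the lower bound of Theorem \ref{mainth} applies to every $F\in C_b(\mathcal P(E))$, and with the reformulations this yields (b).

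The step I expect to be the main obstacle is the verification of (B.1) and (B.2): the convexity and, under the Feller property, the closedness of the graph of $\theta\mapsto M_2(\theta)$ (and of $\theta\mapsto\underline M_2(\theta)$), since $M_2$ is defined through the disintegration of $\mu$ and both properties require phrasing the ``$\mu$-a.s.\ Wasserstein ball'' condition in a way that is stable under convex combinations and weak limits. These facts are essentially the statements of Lemma \ref{robustentropyselection} and Lemma \ref{robustentropyselection2}, here in the degenerate case where the entropic weight $R$ is replaced by the $\{0,\infty\}$-valued $\beta$.
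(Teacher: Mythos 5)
Your overall route is the same as the paper's: reduce both parts to Theorem \ref{mainth} and Corollary \ref{cor1} applied to the $\{0,\infty\}$-valued $\beta$ and $\underline\beta$, identify $\tfrac1n\rho_n(nF\circ L_n)=\sup_{\mu\in M_n(\pi_0)}\int F\circ L_n\,d\mu$ and $\sup_\nu(F(\nu)-I(\nu))$ with the constrained supremum, verify (B.1) through convexity of the graph of $\theta\mapsto M_2(\theta)$ (this is exactly Lemma \ref{Mnconv}), and for (b) check (B.1), (B.3) under (M); your explicit remark that lower semi-continuity of $I$ plus compactness of $\mathcal P(E)$ gives the compact sub-level sets needed for Corollary \ref{cor1}(b) is a point the paper leaves implicit, and your treatment of (b) is complete and matches the paper.

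The genuine gap is in your verification of (B.2), i.e.\ closedness of $\{(\theta,\mu):\mu\in M_2(\theta)\}$, which you yourself flag as the main obstacle. Neither of your two suggested justifications works as stated. The remark after the list of assumptions only explains why the Feller property gives weak continuity of $\theta\mapsto\theta\otimes\pi$, which suffices for (B.2) in the relative-entropy case but says nothing about the stability of the $\mu$-a.s.\ constraint $d_W(\mu_{1,2}(x_1),\pi(x_1))\le r$ under weak convergence of $\mu$. Your coupling reformulation is not equivalent to that constraint: the existence of $\gamma\in\mathcal P(E^3)$ with $(1,2)$-marginal $\mu$, $(1,3)$-marginal $\mu_{0,1}\otimes\pi$ and $\int d(x_2,x_3)\,d\gamma\le r$ encodes only the averaged condition $\int_E d_W(\mu_{1,2}(x_1),\pi(x_1))\,\mu_{0,1}(dx_1)\le r$ (disintegrate $\gamma$ over $x_1$), which defines a strictly larger set than $M_2(\theta)$; passing to weak limits of such $\gamma$'s therefore shows closedness of the wrong set and does not establish lower semi-continuity of $\beta_2^{\cdot}(\cdot)$. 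The paper closes exactly this hole in Lemma \ref{M2closed}: the a.s.\ condition is rewritten as countably many inequalities, indexed by a countable sup-norm-dense family of Lipschitz-1 functions $f_i$ and tested against nonnegative $g\in C_b(E)$, namely $\int_{E^2} g(x)f_i(y)\,\mu(dx,dy)-\int_E g(x)\bigl(\int_E f_i\,d\pi(x)-r\bigr)\,\mu_{0,1}(dx)\le 0$, each of which is closed under weak convergence precisely because Feller makes $x\mapsto\int f_i\,d\pi(x)$ continuous; joint closedness in $(\theta,\mu)$ is then obtained via the enlarged sets $M_2^{r,n}(\theta)$ as in Lemmas \ref{lscLDP} and \ref{weaklimbeta}. (A small side correction: the convexity and closedness facts you need are Lemmas \ref{Mnconv}, \ref{M2closed}, \ref{lscLDP}, \ref{weaklimbeta}, not Lemmas \ref{robustentropyselection} and \ref{robustentropyselection2}, which are the entropic representation formulas and are not needed here since $\beta_n^{\theta}=\infty\cdot\eins_{(M_n(\theta))^C}$ directly from the definition.)
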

This result is easiest interpreted by looking at the case $r = 0$. If both upper and lower bound hold, the above states
\[
\pi_n \circ L_n^{-1} \stackrel{w}{\rightarrow} \delta_{\mu^*} \in \mathcal{P}(\mathcal{P}(E)),
\]
where $\mu^*$ is the unique invariant measure under the Markov chain transition kernel $\pi$, which - under condition (M) - always exists.

Specifically, the choices $F(\nu) := \int_E f d\nu$ for $f \in C_b(E)$ in the 
above can be interpreted as a robust Ces\`aro limit of a Markov chain. Indeed, 
for $r = 0$, this yields
\[
\lim_{n\rightarrow \infty}\frac{1}{n} \sum_{i=1}^{n} \pi_0 \pi^{(i-1)} \stackrel{w}{\rightarrow} \mu^*.
\]
For $r > 0$ however, we get a result which strongly resembles e.g.~\cite[Theorem 4.1]{hartfiel1994theory}, but in a more general state space. 

\subsubsection{Generalizations and relation to the literature}
In this paper robustness is modeled via the first Wasserstein distance because it is both tractable and frequently used. Nevertheless, the question arises whether the presented approach can be applied more generally, specifically related to the existing literature in finite state spaces. This section roughly outlines potential extensions.

In the existing literature regarding robust Markov chains in finite state spaces - where we mainly refer to \cite{hartfiel1994theory,vskulj2009discrete} as references - the starting point is a robust transition kernel $P : E \rightarrow 2^{\mathcal{P}(E)}$ satisfying certain convexity and closedness conditions. For our approach however, one starts with both a transition kernel $\pi : E \rightarrow \mathcal{P}(E)$ and a mapping $U: \mathcal{P}(E) \rightarrow 2^{\mathcal{P}(E)}$, with the relation of the approaches being $P = U \circ \pi$.

In the previous Section \ref{subsec::Applicationsto} we used $U(\mu) = 
\{\hat{\mu} \in \mathcal{P}(E) : d_W(\mu,\hat{\mu})\leq r\}$.\footnote{The 
setting of Section \ref{subsec::Applicationsto} translates to $\beta(\nu,\mu) = 
\inf_{\hat{\mu}\in U(\mu)} R(\nu,\hat{\mu})$ for large deviations results 
(Theorem \ref{MainApplicationSum}) and $\beta(\nu,\mu) = \infty \cdot 
\eins_{U(\mu)^C}(\nu)$ for law of large numbers results (Theorem 
\ref{limitweakrobust}). Further $M_n(\theta) = \{\mu \in \mathcal{P}(E^n) : 
\mu_{0,1} \in U(\theta),~ \mu_{i,i+1}(x_1,...,x_i) \in 
U(\pi(x_i))~\mu\text{-a.s.~for }i=1,...,n-1\}$ for $\theta \in 
\mathcal{P}(E)$.} In general, the following conditions on $U$ would allow for a 
similar type of proof of analogs of Theorem \ref{MainApplicationSum} and 
Theorem \ref{limitweakrobust}, where the assumptions on $E$ (compactness) and 
$\pi$ (Feller property and/or (M)) stay the same.
\begin{itemize}
	\item[(1)] $\mu \in U(\mu)$ for all $\mu \in \mathcal{P}(E)$.
	\item[(2)] The graph of $U$, i.e.~$\{(\mu,\hat{\mu}) \in \mathcal{P}(E)^2 : 
	\hat{\mu} \in U(\mu)\}$, is closed and convex.
\end{itemize}
Here, (1) implies $\beta(\mu,\mu) = 0$ for all $\mu \in \mathcal{P}(E)$. That 
the graph of $U$ is convex implies condition (B.1), see Lemma \ref{Mnconv} and 
the subsequent paragraph, as well as Lemma \ref{weaklimbeta}. Closedness of the 
graph is used to verify condition (B.2), see Lemma 
\ref{M2closed}, \ref{lscLDP} and \ref{weaklimbeta}. For the large deviations 
result, closedness of the graph also guarantees a representation of 
$\beta_n^{\theta}$ in terms of $M_n(\theta)$, see Lemma 
\ref{robustentropyselection} and \ref{robustentropyselection2}. 

The assumption 
that $E$ has to be compact can likely be loosened by assuming that $U$ is 
compact valued instead, even though an analog of Lemma \ref{M2closed} is then 
more difficult to obtain.

\subsection{Structure of the paper}
In the following Section \ref{sec::mainth}, we prove Theorem \ref{mainth} and Corollary \ref{cor1}. The method of proof is oriented at \cite[Chapter 8 and 9]{dupuis2011weak}, while also using tools from convex duality and measurable selection.
Section \ref{subsec::lowerbound} gives results related to and the proof of the lower bound, Section \ref{subsec::UpperBound} results related to and the proof of the upper bound, and Section \ref{subsec::Cor} the proof of Corollary \ref{cor1}.

In Section \ref{sec::Appl}, we present in depth the applications to robust Markov chains. Aside from using Theorem \ref{mainth} and Corollary \ref{cor1}, Section \ref{sec::Appl} is self-contained, so readers who prefer to read Section \ref{sec::Appl} before Section \ref{sec::mainth} can easily do so. A large part of Section \ref{sec::Appl} is devoted to verify conditions (B.1) and (B.2) for the different choices of $\beta$. Further, the obtained large deviations results are illustrated in Example \ref{simpleexample}.

Many of the smaller results not listed in the introduction are interesting for their own sake, e.g.~Lemma \ref{ughselection}, Lemma \ref{allpi0lower} and Lemma \ref{M2closed}.

\section{Proof of Theorem \ref{mainth} and Corollary \ref{cor1}}
\label{sec::mainth}
\subsection{Main Theorem Lower Bound}
\label{subsec::lowerbound}
In this section, at some points it is necessary to evaluate $\rho^{\theta}_n$ at universally measurable functions, which is still well defined. More precisely, upper semi-analytic functions are the object of interest, the reason made obvious in Lemma \ref{ughselection}. In particular, upper semi-analytic functions are universally measurable. See e.g.~\cite[Chapter 7]{bertsekas2004stochastic} for background. 
\subsubsection{Preliminary Results}
\label{subsubsec::lowerboundprelim}
\begin{lemma}{(See also \cite[Prop.~A.1]{lacker2016non})}
	\label{ughselection}
	For $\theta \in \mathcal{P}(E)$, $f\in C_b(E^n)$ and $0 < k < n$ it holds
	\[
	\rho_n^{\theta}(f) = \rho_k^{\theta}(g),
	\]
	where $g: E^k \rightarrow \mathbb{R}$ is defined by
	\[
	g(x_1,...,x_k) = \rho_{n-k}^{\pi(x_k)}(f(x_1,...,x_k,\cdot)).
	\]
	Further, $g$ is upper semi-analytic.
	\begin{proof}
		First, let $\nu \in \mathcal{P}(E^k)$ and $K: E^k \rightarrow \mathcal{P}(E^{n-k})$ be a stochastic kernel.
		For notational purposes, we write $\bar{x} = (x_1,...,x_k)$ for $x_1,...,x_k \in E$ and
		\[
		K(x_1,...,x_k) = K(\bar{x}) = K^{\bar{x}}.
		\]
		Denote the decomposition of $K^{\bar{x}}$ in the usual way
		\[
		K^{\bar{x}} = K^{\bar{x}}_{0,1} \otimes K^{\bar{x}}_{1,2} \otimes ... \otimes K^{\bar{x}}_{n-k-1,n-k}.
		\]
		For the decompositions of $\nu$ and $\nu\otimes K$ the trivial $\nu\otimes K$-almost sure equalities hold
		\begin{align*}
		\nu_{i,i+1}(x_1,...,x_i) &= (\nu \otimes K)_{i,i+1}(x_1,...,x_i) &\text{ for } i = 0,...,k-1,\\
		K^{\bar{x}}_{i,i+1}(x_{k+1},...,x_{k+i}) &= (\nu \otimes K)_{k+i,k+i+1}(x_1,...,x_{k+i}) &\text{ for } i = 0,...,n-k-1.
		\end{align*}
		Hence
		\begin{align*}
		&\beta_k^{\theta}\left(\nu\right)  + \int_{E^k} \beta_{n-k}^{\pi\left(x_k\right)}\left(K^{\bar{x}}\right) \nu\left(dx_1,...,dx_k\right)\\
		&= \int_{E^n} \beta\left(\nu_{0,1},\theta\right) + \left(\sum_{i=1}^{k-1} \beta\left(\nu_{i,i+1}\left(x_1,...,x_i\right),\pi\left(x_i\right)\right)\right) + \beta\left(K^{\bar{x}}_{0,1},\pi\left(x_k\right)\right)\\ &+ \left(\sum_{i=1}^{n-k-1} \beta\left(K^{\bar{x}}_{i,i+1}\left(x_{k+1},...,x_{k+i}\right),\pi\left(x_{k+i}\right)\right)\right) K^{\bar{x}}\left(dx_{k+1},...,dx_n\right) \nu\left(dx_1,...,dx_k\right)\\
		&= \beta_n^{\theta}\left(\nu\otimes K\right).
		\end{align*}
		Using the above and a standard measurable selection argument \cite[Proposition 7.50]{bertsekas2004stochastic} we get
		\begin{align*}
		&\rho_k^{\theta}(g)\\
		&= \sup_{\nu\in \mathcal{P}(E^k)} \left( \int_{E^k} g d\nu - \beta_k^{\theta}(\nu)\right)\\
		&= \sup_{\nu \in \mathcal{P}(E^k)} \left( \int_{E^k} \sup_{\mu \in \mathcal{P}(E^{n-k})} \left( \int_{E^{n-k}} f(x_1,...,x_n) \mu(dx_{k+1},...,dx_n) - \beta_{n-k}^{\pi(x_k)}(\mu)\right) \nu(dx_1,...,dx_k) - \beta_k^{\theta}(\nu)\right)\\
		&= \sup_{\nu \in \mathcal{P}(E^k)}\sup_{\substack{K: E^k \rightarrow \mathcal{P}(E^{n-k}),\\K\text{ Borel}}} \left( \int_{E^n} f d\nu \otimes K - \beta_n^{\nu}(\nu \otimes K)\right)\\
		&= \rho_n^{\theta}(f).
		\end{align*}
		
		That $g$ is upper semi-analytic can be shown as follows: 
		Both mappings
		\begin{align*}
		(x_k,\nu) \mapsto -\beta_{n-k}^{\pi(x_k)}(\nu),\\
		(x_1,...,x_k,\nu) \mapsto \int_{E^{n-k}} f(x_1,...,x_k,\cdot) d\nu
		\end{align*}
		are upper semi-analytic by \cite[Prop.~7.48]{bertsekas2004stochastic}, where for the first mapping we implicitly have to use \cite[Prop 7.27]{bertsekas2004stochastic} as mentioned after the definition of $\beta_n^{\cdot}(\cdot)$.
		The sum of these mappings is therefore still upper semi-analytic (see e.g.~\cite[Lemma 7.30 (4)]{bertsekas2004stochastic}) and hence by \cite[Prop.~7.47]{bertsekas2004stochastic} we get that $g$ is upper semi-analytic.
	\end{proof}
\end{lemma}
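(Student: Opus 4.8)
The plan is to prove the "tower property" for $\rho_n^\theta$ by first establishing the corresponding additive tower property for the primal functionals $\beta_n^\theta$ under the gluing of kernels, and then dualizing. The heart of the matter is the identity
\[
\beta_k^{\theta}(\nu) + \int_{E^k} \beta_{n-k}^{\pi(x_k)}(K^{\bar{x}})\,\nu(d\bar{x}) = \beta_n^{\theta}(\nu \otimes K),
\]
valid for every $\nu \in \mathcal{P}(E^k)$ and every Borel stochastic kernel $K : E^k \to \mathcal{P}(E^{n-k})$. To see this one writes out the definition of $\beta_n^\theta$ applied to the measure $\nu \otimes K \in \mathcal{P}(E^n)$: its decomposition into one-step kernels agrees, $(\nu\otimes K)$-almost surely, with the decomposition of $\nu$ in the first $k-1$ kernels and with the decomposition of $K^{\bar x}$ (shifted by $k$ coordinates) in the remaining $n-k$ kernels, the junction term being $\beta(K^{\bar x}_{0,1}, \pi(x_k))$. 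Splitting the sum $\sum_{i=1}^{n-1}$ at index $k$ and using Fubini to write the integral over $E^n$ as an iterated integral $\nu(d\bar x)$ then $K^{\bar x}(dx_{k+1},\dots,dx_n)$ yields exactly the two terms on the left. Measurability of all integrands is guaranteed by \cite[Prop.~7.27]{bertsekas2004stochastic}, as already noted after the definition of $\beta_n^\cdot(\cdot)$.

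Given this primal identity, I would compute $\rho_k^\theta(g)$ directly from its definition. Substituting $g(\bar x) = \rho_{n-k}^{\pi(x_k)}(f(\bar x,\cdot)) = \sup_{\mu \in \mathcal{P}(E^{n-k})}\big(\int f(\bar x,\cdot)\,d\mu - \beta_{n-k}^{\pi(x_k)}(\mu)\big)$ into $\rho_k^\theta(g) = \sup_{\nu}\big(\int_{E^k} g\,d\nu - \beta_k^\theta(\nu)\big)$ leaves an integral of a supremum over $\mu$, which one exchanges with the integral via a measurable selection theorem \cite[Prop.~7.50]{bertsekas2004stochastic}: the inner sup is attained (up to $\varepsilon$) by a universally measurable kernel $\bar x \mapsto \mu = K^{\bar x}$, and the upper semi-analyticity established below is exactly what makes this selection step legitimate. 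This turns $\int_{E^k} g\,d\nu$ into $\sup_K \int_{E^n} f\,d(\nu\otimes K) - \int_{E^k}\beta_{n-k}^{\pi(x_k)}(K^{\bar x})\,\nu(d\bar x)$; combining with $-\beta_k^\theta(\nu)$ and invoking the primal identity collapses the two subtracted terms into $-\beta_n^\theta(\nu\otimes K)$. Finally, since every measure in $\mathcal{P}(E^n)$ decomposes as $\nu \otimes K$ for suitable $\nu \in \mathcal{P}(E^k)$ and kernel $K$, the double supremum over $(\nu, K)$ is just the supremum over $\mathcal{P}(E^n)$, which is $\rho_n^\theta(f)$. One should be slightly careful that the supremum over Borel kernels suffices even though the selection produces a universally measurable one — this is standard, as $\rho_n^\theta$ is already insensitive to the completion and a universally measurable kernel can be approximated, or one simply notes $\rho_n^\theta(f)$ is unchanged when the sup is taken over universally measurable decompositions.

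For the upper semi-analyticity of $g$, I would assemble it from the building blocks in \cite[Chapter 7]{bertsekas2004stochastic}. The map $(x_1,\dots,x_k,\mu)\mapsto \int_{E^{n-k}} f(x_1,\dots,x_k,\cdot)\,d\mu$ is upper semi-analytic by \cite[Prop.~7.48]{bertsekas2004stochastic} (using that $f$ is continuous and bounded), and $(x_k,\mu)\mapsto -\beta_{n-k}^{\pi(x_k)}(\mu)$ is upper semi-analytic by the same proposition together with the Borel measurability of $(x_k,\mu)\mapsto \beta_{n-k}^{\pi(x_k)}(\mu)$ from \cite[Prop.~7.27]{bertsekas2004stochastic}; their sum is upper semi-analytic by \cite[Lemma 7.30(4)]{bertsekas2004stochastic}, and then $g$, being the partial supremum of this sum over the variable $\mu \in \mathcal{P}(E^{n-k})$, is upper semi-analytic by \cite[Prop.~7.47]{bertsekas2004stochastic}. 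I expect the only genuinely delicate point to be the justification of the interchange of integral and supremum (the measurable selection step): one must know the integrand is upper semi-analytic \emph{before} the selection is applied, so the logical order is to record upper semi-analyticity of the relevant map first and only then perform the selection; presenting it in the reverse order (as the statement does, for readability) is fine as long as the proof is organized correctly.
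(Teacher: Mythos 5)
Your proposal is correct and follows essentially the same route as the paper: the additive identity $\beta_k^{\theta}(\nu)+\int_{E^k}\beta_{n-k}^{\pi(x_k)}(K^{\bar{x}})\,\nu(d\bar{x})=\beta_n^{\theta}(\nu\otimes K)$, the interchange of integral and supremum via \cite[Prop.~7.50]{bertsekas2004stochastic}, and the upper semi-analyticity of $g$ assembled from \cite[Prop.~7.27, 7.47, 7.48, Lemma 7.30(4)]{bertsekas2004stochastic}. Your added remarks on the logical order of the semi-analyticity step and on universally measurable versus Borel selections are sound refinements of points the paper treats implicitly.
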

\begin{lemma}
	\label{allpi0lower}
	Under condition (B.3), for all $\theta \in \mathcal{P}(E)$ and $f\in C_b(E^n)$ it holds
	\[
	\rho_n^{\theta}(f) \geq \int_E \rho_n^{\delta_x}(f) \theta(dx).
	\]
	\begin{proof}
		Let $f\in C_b(E^n)$. By condition (B.3), it holds for all $x\in E$
		\begin{align*}
		\rho_n^{\delta_x}(f) &= \sup_{\nu \in \mathcal{P}(E^n)} \left( \int_{E^n} f d\nu - \beta_n^{\delta_x}(\nu)\right)\\
		&= \sup_{\substack{\nu \in \mathcal{P}(E^n) :\\ \nu_{0,1} = \delta_x}} \left( \int_{E^n} f d\nu - \beta_n^{\delta_x}(\nu)\right)\\
		&= \sup_{\nu \in \mathcal{P}(E^{n-1})} \left( \int_{E^n} f d(\delta_x \otimes \nu) - \beta_n^{\delta_x}(\delta_x \otimes \nu)\right).
		\end{align*}
		Hence we get for $\theta \in \mathcal{P}(E)$
		\begin{align*}
		&\int_E \rho_n^{\delta_{x_1}}(f) \theta(dx_1) \\&= \int_E \sup_{\nu \in \mathcal{P}(E^{n-1})} \left( \int_{E^n} f d(\delta_{x_1} \otimes \nu) - \beta_n^{\delta_{x_1}}(\delta_{x_1} \otimes \nu)\right) \theta(dx_1)\\
		&= \int_E \sup_{\nu \in \mathcal{P}(E^{n-1})} \left( \int_{E^{n-1}} f(x_1,\cdot) d\nu - \int_{E^{n-1}} \sum_{k=2}^n \beta(\nu_{k-2,k-1}(x_2,...,x_{k-1}),\pi(x_{k-1}) \nu(dx_2,...,dx_n)\right) \theta(dx_1)	\\
		&\stackrel{(*)}{=} \sup_{\substack{K : E \rightarrow \mathcal{P}(E^{n-1})\\ K \text{Borel}}}\left( \int_{E^n} f d\theta \otimes K - \beta_n^{\theta}(\theta \otimes K)\right)\\
		&\leq \sup_{\nu \in \mathcal{P}(E^n)} \left( \int_{E^n} f d\nu - \beta_n^{\theta}(\nu)\right)\\ &= \rho_n^{\theta}(f).	
		\end{align*}
		Here, $(*)$ follows by a standard measurable selection argument, e.g.~\cite[Proposition 7.50]{bertsekas2004stochastic}.
	\end{proof}
\end{lemma}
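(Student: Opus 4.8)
The plan is to reduce the statement to a single application of the measurable selection theorem, exactly as announced in the hint at step $(*)$. First I would unwind $\rho_n^{\delta_x}(f)$ using (B.3): since $\beta(\nu_{0,1},\delta_x)=\infty$ whenever $\nu_{0,1}\neq\delta_x$ (because $\nu_{0,1}$ must be absolutely continuous with respect to $\delta_x$ for the term to be finite), the supremum defining $\rho_n^{\delta_x}(f)$ is effectively over measures of the form $\delta_x\otimes\nu$ with $\nu\in\mathcal P(E^{n-1})$. On this set $\beta(\nu_{0,1},\delta_x)=\beta(\delta_x,\delta_x)=0$, and the chain-rule expression for $\beta_n^{\delta_x}(\delta_x\otimes\nu)$ collapses to $\int_{E^{n-1}}\sum_{k=2}^n\beta\big((\delta_x\otimes\nu)_{k-1,k}(x_2,\dots,x_k),\pi(x_k)\big)\,(\delta_x\otimes\nu)(\cdot)$, i.e.\ purely the transition terms starting from the second coordinate. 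This gives the three-line computation of $\rho_n^{\delta_x}(f)$ displayed in the statement.

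Next I would integrate this identity against $\theta(dx_1)$ and recognize the right-hand side as a supremum over Borel kernels $K:E\to\mathcal P(E^{n-1})$. The key point is that the integrand $x_1\mapsto \sup_{\nu\in\mathcal P(E^{n-1})}\big(\int f(x_1,\cdot)\,d\nu - (\text{transition terms})\big)$ is, by the measurability statement of Lemma \ref{ughselection} (or directly by \cite[Prop.~7.47--7.48]{bertsekas2004stochastic}), upper semi-analytic, hence universally measurable, so the integral against $\theta$ makes sense; and then \cite[Prop.~7.50]{bertsekas2004stochastic} produces, for each $\varepsilon>0$, a universally measurable $\varepsilon$-optimal selector $x_1\mapsto K^{x_1}\in\mathcal P(E^{n-1})$. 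Pushing $\varepsilon\to 0$ yields
\[
\int_E \rho_n^{\delta_{x_1}}(f)\,\theta(dx_1) = \sup_{\substack{K:E\to\mathcal P(E^{n-1})\\ K\text{ Borel}}}\left(\int_{E^n} f\,d(\theta\otimes K) - \beta_n^{\theta}(\theta\otimes K)\right),
\]
where one checks that $\beta_n^{\theta}(\theta\otimes K) = \int_E (\text{transition terms for }K^{x_1})\,\theta(dx_1)$ because $\beta((\theta\otimes K)_{0,1},\theta)=\beta(\theta,\theta)=0$ and the remaining summands match up coordinate-wise (this bookkeeping is the same one carried out explicitly in the proof of Lemma \ref{ughselection}).

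Finally, since every measure of the form $\theta\otimes K$ with $K$ Borel lies in $\mathcal P(E^n)$, the supremum over such kernels is bounded above by the supremum over all $\mu\in\mathcal P(E^n)$, which is precisely $\rho_n^{\theta}(f)$; this gives the desired inequality. I expect the only genuinely delicate point to be the measurable selection/measurability step $(*)$: one must be careful that the selector is only guaranteed universally (not Borel) measurable, so the inner supremum over Borel $K$ should really be taken over universally measurable kernels, or else one approximates a universally measurable kernel by Borel ones up to a $\theta$-null set — either way this is routine given the tools already set up in Lemma \ref{ughselection}. Everything else is the deterministic chain-rule bookkeeping and the trivial final majorization.
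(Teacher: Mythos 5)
Your proposal follows essentially the same route as the paper's proof: using (B.3) to restrict the supremum defining $\rho_n^{\delta_x}(f)$ to measures of the form $\delta_x\otimes\nu$, integrating against $\theta$, applying the measurable selection argument of \cite[Prop.~7.50]{bertsekas2004stochastic} at step $(*)$, and concluding with the trivial majorization by the supremum over all of $\mathcal{P}(E^n)$. Your remark on the universally measurable (rather than Borel) selector is a fair point of care, but it does not change the argument, since $\theta\otimes K$ still defines an element of $\mathcal{P}(E^n)$ over which the final supremum ranges.
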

\begin{lemma}
	\label{weakcon}
	Let $(X_i)_{i\in\mathbb{N}}$ be an $E$-valued sequence of random variables such that $\lim_{n\rightarrow 
		\infty}\frac{1}{n} \sum_{i= 1}^{n} F(X_i) = \mathbb{E}[F(X_1)]$ holds almost surely for all 
	$F\in C_b(E)$. Let $\nu^{(n)} = \mathbb{P}\circ 
	(X_1,...,X_n)^{-1}$ be the distribution of $(X_1,...,X_n)$ for 
	$n\in\mathbb{N}$. Then $\nu^{(n)} \circ L_n^{-1} \stackrel{w}{\rightarrow} 
	\delta_{\nu^{(1)}}$.
\end{lemma}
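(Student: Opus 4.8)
The plan is to reduce the asserted convergence of laws on $\mathcal{P}(\mathcal{P}(E))$ to an almost-sure statement about the random empirical measures themselves. First I would recall two standard facts. Since $E$ is Polish, the weak topology on $\mathcal{P}(E)$ is metrizable, and there is a countable family $\{f_k\}_{k\in\mathbb{N}}\subseteq C_b(E)$ which \emph{determines} weak convergence, in the sense that for $\mu_m,\mu\in\mathcal{P}(E)$ one has $\mu_m\stackrel{w}{\to}\mu$ if and only if $\int f_k\,d\mu_m\to\int f_k\,d\mu$ for every $k$ (one may take a countable dense subset of the unit ball of bounded Lipschitz functions; this is the classical convergence-determining result of Parthasarathy for separable metric spaces). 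Second, $\mathbb{E}[F(X_1)]=\int_E F\,d\nu^{(1)}$ and $\int_E F\,dL_n(X_1,\dots,X_n)=\frac1n\sum_{i=1}^n F(X_i)$ for every $F\in C_b(E)$.

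Next, for each fixed $k$ the hypothesis applied to $F=f_k$ yields an event $\Omega_k$ with $\mathbb{P}(\Omega_k)=1$ on which $\int f_k\,dL_n(X_1,\dots,X_n)=\frac1n\sum_{i=1}^n f_k(X_i)\to\int f_k\,d\nu^{(1)}$. Setting $\Omega_0:=\bigcap_{k\in\mathbb{N}}\Omega_k$, we still have $\mathbb{P}(\Omega_0)=1$, and on $\Omega_0$ the convergence $\int f_k\,dL_n(X_1,\dots,X_n)\to\int f_k\,d\nu^{(1)}$ holds simultaneously for all $k$. By the choice of the family $\{f_k\}$, this means precisely that $L_n(X_1,\dots,X_n)\stackrel{w}{\to}\nu^{(1)}$ in $\mathcal{P}(E)$ almost surely.

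Finally I would upgrade this almost-sure convergence to convergence of laws. Let $G\in C_b(\mathcal{P}(E))$ be arbitrary. On $\Omega_0$ we have $G(L_n(X_1,\dots,X_n))\to G(\nu^{(1)})$ by continuity of $G$, and $G$ is bounded, so by dominated convergence
\[
\int_{\mathcal{P}(E)} G\,d\bigl(\nu^{(n)}\circ L_n^{-1}\bigr)=\mathbb{E}\bigl[G(L_n(X_1,\dots,X_n))\bigr]\longrightarrow G(\nu^{(1)})=\int_{\mathcal{P}(E)} G\,d\delta_{\nu^{(1)}},
\]
using that $\nu^{(n)}\circ L_n^{-1}$ is exactly the distribution of the random measure $L_n(X_1,\dots,X_n)$. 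Since $G$ was arbitrary, $\nu^{(n)}\circ L_n^{-1}\stackrel{w}{\to}\delta_{\nu^{(1)}}$, which is the claim.

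I do not expect a genuine obstacle here. The only point requiring care, which I would flag as the main (if minor) ingredient, is the existence of the countable convergence-determining family $\{f_k\}$ for Polish $E$: this is what permits gluing the per-$F$ almost-sure hypothesis into a single almost-sure weak-convergence statement for the empirical measures. Everything else is bookkeeping together with one application of dominated convergence; one could alternatively invoke the abstract fact that almost-sure convergence of random elements of a metric space implies convergence in distribution, but writing out the dominated-convergence step avoids appealing to any further machinery.
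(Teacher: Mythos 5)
Your proof is correct and follows essentially the same route as the paper: reduce to a countable convergence-determining family of test functions to get almost-sure weak convergence of the empirical measures $L_n(X_1,\dots,X_n)\to\nu^{(1)}$, then upgrade to convergence of the laws $\nu^{(n)}\circ L_n^{-1}\to\delta_{\nu^{(1)}}$ by continuity and dominated convergence. The only difference is that the paper constructs the countable family explicitly (passing to a totally bounded equivalent metric, taking a countable sup-norm-dense set of uniformly continuous bounded functions, and doing the triangle-inequality approximation) whereas you cite the standard fact as a black box; note that your parenthetical recipe of ``a countable dense subset of the unit ball of bounded Lipschitz functions'' tacitly requires such a remetrization, since for the original metric that unit ball need not be sup-norm separable.
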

\begin{proof}
	$(E,d)$ admits an equivalent metric $m$ such that the space of uniformly continuous and bounded functions with respect to this metric
	$\mathcal{U}_b(E,m)$ is separable with respect to the uniform metric, see e.g.~Lemma 3.1.4 in \cite{stroock1993probability}.\footnote{Two metrics are equivalent if they generate the same topology. The uniform metric $\hat{m}$ on $\mathcal{U}_b(E,m)$ is given by $\hat{m}(f_1,f_2) := \sup_{x\in E} \left|f_1(x)-f_2(x)\right|$.}
	Choose a countable, 
	dense subset $A\subseteq \mathcal{U}_b(E,m)$. By assumption and since $A$ 
	is countable, we can choose a null set $N\subseteq \Omega$ such that for all 
	$\omega \in N^C$
	\[ \forall F \in A :\lim_{n\rightarrow \infty} \int_E F 
	dL_n(X_1(\omega),...,X_n(\omega)) = \lim_{n\rightarrow \infty}\frac{1}{n} 
	\sum_{i=1}^n F(X_i(\omega)) = \mathbb{E}[F(X_1)] = \int_E F d\nu^{(1)}.
	\]
	Let $F\in \mathcal{U}_b(E,m)$ and choose $(F_i)_{i\in\mathbb{N}} \subseteq A$ 
	such that $\lim_{i\rightarrow \infty}\sup_{x\in E} |F_i(x)-F(x)| = 0$.
	For all $i,n\in \mathbb{N},\omega \in N^C$, it holds
	\begin{align*}
	&\left\lvert \int_{E} F d L_n(X_1(\omega),...,X_n(\omega)) - \int_{E} F d 
	\nu^{(1)}\right\rvert \\
	\leq &\left\lvert\int_{E} (F-F_i) d 
	L_n(X_1(\omega),...,X_n(\omega))\right\rvert + \left\lvert\int_{E} (F_i -F) 
	d \nu^{(1)}\right\rvert \\ +&\left\lvert\int_{E} F_i d 
	L_n(X_1(\omega),...,X_n(\omega)) - \int_{E} F_i d \nu^{(1)}\right\rvert.
	\end{align*}
	This yields for all $\omega \in N^C$
	\[
	\lim_{n\rightarrow \infty}\int_{E} F d L_n(X_1(\omega),...,X_n(\omega)) = 
	\int_{E} F d \nu^{(1)}.
	\]
	So
	$L_n(X_1,...,X_n) \stackrel{w}{\rightarrow} \nu^{(1)}$ holds 
	$\mathbb{P}$-a.s.\footnote{Note that we first get weak convergence with respect to the equivalent metric $m$. But since weak convergence under equivalent metrics is the same, this carries over to the metric $d$.}
	Hence, for $f \in C_b(\mathcal{P}(E))$ it holds $f(L_n(X_1,...X_n)) 
	\rightarrow f(\nu^{(1)})$ $\mathbb{P}$-a.s.~by continuity of $f$ and thus by 
	dominated convergence
	\begin{align*}
	\int_{\mathcal{P}(E)} f d (\nu^{(n)}\circ L_n^{-1}) &= \int_{E^n} f(L_n) d 
	\nu^{(n)} \\
	&= \int_{\Omega} f(L_n(X_1,...,X_n)) d \mathbb{P} \\
	&\rightarrow \int_{\Omega} f(\nu^{(1)}) d\mathbb{P} \\&= f(\nu^{(1)}) = 
	\int_{\mathcal{P}(E)} f d \delta_{\nu^{(1)}}.
	\end{align*}
\end{proof}
For the following results, note that under condition (M), $\pi$ has a unique invariant measure, which we denote by $\mu^*$, see Lemma 8.6.2.~(a) of \cite{dupuis2011weak}.
\begin{lemma}[Lemma 8.6.2.~(b) of \cite{dupuis2011weak}]
	\label{862b}
	Let (M) be satisfied.
	Let $A \subseteq E$ be a Borel set such that $\pi^{(l_0)}(x_0,A) > 0$ for some $x_0 \in E$. Then $\mu^*(A) > 0$, where $\mu^*$ is the unique invariant measure under $\pi$.
\end{lemma}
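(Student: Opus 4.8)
The plan is to argue by contradiction: assume $\mu^*(A) = 0$ and use the invariance of $\mu^*$ together with condition (M.1) to contradict the hypothesis $\pi^{(l_0)}(x_0,A) > 0$.

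First, since $\mu^*\pi = \mu^*$ implies $\mu^*\pi^{(j)} = \mu^*$ for every $j \in \mathbb{N}$, we obtain
\[
0 = \mu^*(A) = \int_E \pi^{(j)}(y,A)\,\mu^*(dy)
\]
for all $j$. As $y \mapsto \pi^{(j)}(y,A)$ is a nonnegative Borel function, the set $N_j := \{y \in E : \pi^{(j)}(y,A) > 0\}$ is $\mu^*$-null, and hence so is $N := \bigcup_{j \geq n_0} N_j$. Because $\mu^*$ is a probability measure, $N^C \neq \emptyset$; fix some $y_0 \in N^C$. By construction $\pi^{(j)}(y_0,A) = 0$ for all $j \geq n_0$, so the measure $\sum_{j=n_0}^{\infty} 2^{-j}\pi^{(j)}(y_0)$ assigns mass zero to $A$.

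Next I would apply (M.1) with $x = x_0$ and $y = y_0$, giving $\sum_{i=l_0}^{\infty} 2^{-i}\pi^{(i)}(x_0) \ll \sum_{j=n_0}^{\infty} 2^{-j}\pi^{(j)}(y_0)$. Since the dominating measure vanishes on the Borel set $A$, so does the dominated one, i.e. $\sum_{i=l_0}^{\infty} 2^{-i}\pi^{(i)}(x_0,A) = 0$. But this sum is bounded below by $2^{-l_0}\pi^{(l_0)}(x_0,A) > 0$, a contradiction. Hence $\mu^*(A) > 0$.

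There is essentially no deep step here; the only points requiring care are the direction of the absolute continuity in (M.1) — it transports null sets \emph{from} the reference family $\sum_j 2^{-j}\pi^{(j)}(y)$ \emph{to} the family $\sum_i 2^{-i}\pi^{(i)}(x)$, which is exactly what we need — and the observation that invariance of $\mu^*$ automatically propagates to every iterate $\pi^{(j)}$, so that the single assumption $\mu^*(A)=0$ simultaneously forces $\pi^{(j)}(\cdot,A)=0$ $\mu^*$-a.e.\ for all $j$.
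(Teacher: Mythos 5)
Your argument is correct: invariance of $\mu^*$ passes to all iterates $\pi^{(j)}$, so $\mu^*(A)=0$ would force $\pi^{(j)}(\cdot,A)=0$ $\mu^*$-a.e.\ for every $j\geq n_0$, a point $y_0$ where this holds simultaneously exists, and (M.1) then transports the resulting null set to $\sum_{i\geq l_0}2^{-i}\pi^{(i)}(x_0)$, contradicting $\pi^{(l_0)}(x_0,A)>0$. The paper itself gives no proof here (it cites Lemma 8.6.2(b) of Dupuis--Ellis), and your proof is just the contrapositive form of that standard argument, so nothing further is needed.
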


\begin{lemma}[Adapted version of Lemma 8.6.2.~(c) of \cite{dupuis2011weak}]
	\label{862c}
	Let (M) and (B.3) be satisfied.
	Let $\nu \in \mathcal{P}(E)$ satisfy $\beta_2^{\nu}(\nu\otimes p) <\infty$ for some stochastic kernel $p$ on $E$ such that $\nu p = \nu$. Then it holds $\nu \ll \mu^*$, where $\mu^*$ is the unique invariant measure under $\pi$.
	\begin{proof}
		Let $\Omega_0 \subseteq E$ be a Borel set such that $\nu(\Omega_0) = 1$ and $p(x) \ll \pi(x)$ for all $x \in \Omega_0$, which we can choose by (B.3) and since $\beta_2^{\nu}(\nu\otimes p) < \infty$.
		Define $\tilde{p}(x) := \eins_{\Omega_0}(x) p(x) + \eins_{\Omega_0^C}(x) \pi(x)$.
		Since $\tilde{p}(x) \ll \pi(x)$ for all $x\in E$, we have $\tilde{p}^{(l_0)}(x) \ll \pi^{(l_0)}(x)$ for all $x\in E$, where $l_0$ is the constant from condition (M.1). 
		
		Now choose a Borel set $A\subseteq E$ such that $\nu(A) > 0$. By iterating $\nu \tilde{p} = \nu$, we get a Borel set $B \subseteq E$ with $\nu(B) > 0$ and $\tilde{p}^{(l_0)}(x,A) > 0$ for all $x\in B$. Hence $\pi^{(l_0)}(x,A) > 0$ for all $x\in B$ and by Lemma $\ref{862b}$ therefore $\mu^*(A) > 0$.
	\end{proof}
\end{lemma}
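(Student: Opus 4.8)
The plan is to exploit the finiteness of $\beta_2^{\nu}(\nu\otimes p)$ together with hypothesis (B.3) to reduce to a statement about pointwise absolute continuity of iterated kernels, and then invoke Lemma \ref{862b}. First I would unwind the definition: since $\beta(\nu,\nu)=0$, one has $\beta_2^{\nu}(\nu\otimes p) = \int_E \beta(p(x),\pi(x))\,\nu(dx)$, so finiteness together with the fact that $\beta$ is bounded from below forces $\beta(p(x),\pi(x))<\infty$ for $\nu$-a.e.\ $x$; by (B.3) this means $p(x)\ll\pi(x)$ on some Borel set $\Omega_0$ with $\nu(\Omega_0)=1$. Since changing $p$ off $\Omega_0$ affects neither $\nu$ nor the measure $\nu p$, I would replace $p$ by $\tilde p := \eins_{\Omega_0} p + \eins_{\Omega_0^C}\pi$, which is dominated by $\pi$ at \emph{every} point of $E$ and still satisfies $\nu\tilde p = \nu p = \nu$.

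Next I would propagate absolute continuity through the iterates. A short induction on $k$ shows $\tilde p^{(k)}(x)\ll\pi^{(k)}(x)$ for every $x\in E$ and every $k\in\mathbb{N}$, in particular for $k=l_0$ from condition (M.1): if $\pi^{(k)}(x,A)=\int_E\pi(y,A)\,\pi^{(k-1)}(x,dy)=0$, then $\pi(y,A)=0$ for $\pi^{(k-1)}(x,\cdot)$-a.e.\ $y$, hence for $\tilde p^{(k-1)}(x,\cdot)$-a.e.\ $y$ by the inductive hypothesis, hence $\tilde p(y,A)=0$ for those $y$, and therefore $\tilde p^{(k)}(x,A)=\int_E\tilde p(y,A)\,\tilde p^{(k-1)}(x,dy)=0$.

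Finally I would argue by contraposition. Given a Borel set $A$ with $\nu(A)>0$, iterating $\nu\tilde p=\nu$ gives $\nu\tilde p^{(l_0)}=\nu$, so $\int_E\tilde p^{(l_0)}(x,A)\,\nu(dx)=\nu(A)>0$ and there is a Borel set $B$ with $\nu(B)>0$ on which $\tilde p^{(l_0)}(x,A)>0$. By the previous step, $\pi^{(l_0)}(x,A)>0$ for every such $x$ (otherwise $\tilde p^{(l_0)}(x,A)=0$), so picking any $x_0\in B$ and applying Lemma \ref{862b} yields $\mu^*(A)>0$. Equivalently, $\mu^*(A)=0$ implies $\nu(A)=0$, i.e.\ $\nu\ll\mu^*$; condition (M.2) is used only to ensure that the invariant measure $\mu^*$ exists (uniqueness coming from (M.1)).

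The only slightly delicate point is the passage from $p$ to the kernel $\tilde p$ that is pointwise dominated by $\pi$: this is precisely what makes the pointwise positivity hypothesis of Lemma \ref{862b} available. Once $\tilde p$ is in place, the remainder is routine measure-theoretic bookkeeping, and no use is made of the convexity/semicontinuity assumptions (B.1)--(B.2).
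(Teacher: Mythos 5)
Your proposal is correct and follows essentially the same route as the paper's proof: extract $p(x)\ll\pi(x)$ $\nu$-a.e.\ from (B.3) and the finiteness of $\beta_2^{\nu}(\nu\otimes p)$, replace $p$ by the pointwise-dominated kernel $\tilde p=\eins_{\Omega_0}p+\eins_{\Omega_0^C}\pi$ (which still has $\nu$ as invariant measure), propagate absolute continuity to the $l_0$-step kernels, and conclude via Lemma \ref{862b}. The only difference is that you spell out the induction for $\tilde p^{(l_0)}(x)\ll\pi^{(l_0)}(x)$ and the identity $\nu\tilde p^{(l_0)}=\nu$, which the paper leaves implicit.
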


\subsubsection{Proof of Theorem \ref{mainth} Lower Bound}
Let $F\in C_b(\mathcal{P}(E))$ and $\varepsilon > 0$ be fix. We have to show 
\begin{align}
\label{lowerboundmain}
\liminf_{n\rightarrow \infty} \frac{1}{n} \rho_n(n F \circ L_n) \geq \sup_{\nu \in \mathcal{P}(E)} \left( F(\nu) - I(\nu)\right) - 4 \varepsilon.
\end{align}
We do this by showing every subsequence has a further subsequence which satisfies this inequality. So we fix a subsequence and relabel it by $n\in \mathbb{N}$. Labeling subsequences by the same index as the original sequence will be a common practice throughout the remainder of the paper.

\textbf{Outline of the proof:} 

First, we show that there exists a Borel set $\Phi \subseteq E$ such that $\pi^{(l_0)}(y,\Phi) = 1$ for all $y\in E$, and for all $x\in \Phi$ it holds
\begin{align}
\label{ineqlb1}
\liminf_{n\rightarrow \infty} \frac{1}{n} \rho_{n-l_0}^{\delta_x}(n F\circ L_n(x_1,...,x_{l_0})) \geq \sup_{\nu \in \mathcal{P}(E)} (F(\nu)-I(\nu)) - 3\varepsilon
\end{align}
for all $x_1,...,x_{l_0} \in E$ and a further subsequence (the same subsequence for all $x_1,...,x_{l_0}$). This subsequence then remains fix for the rest of the proof and is again labeled by $n\in\mathbb{N}$.

The next step is to use Lemma \ref{ughselection}, i.e.~for all $f\in C_b(E^n)$
\[
\rho_n(f) = \rho_{l_0}((x_1,...,x_{l_0})\mapsto \rho_{n-{l_0}}^{\pi(x_{l_0})}(f(x_1,...,x_{l_0},\cdot)),
\]
where $l_0$ is the constant from condition (M.1). This is used together with
Lemma \ref{allpi0lower}, i.e.~for all $f\in C_b(E^n)$ and $\theta \in \mathcal{P}(E)$
\[
\rho_n^{\theta}(f) \geq \int_E \rho_n^{\delta_x}(f) \theta(dx).
\]
We then use these two results to show
\begin{align}
\label{ineqlb2}
\rho_n(nF\circ L_n) \geq \rho_{l_0}(g_n),
\end{align}
where \[g_n(x_1,...,x_{l_0}) = \int_{\Phi} \rho_{n-l_0}^{\delta_x}(nF \circ L_n(x_1,...,x_{l_0},\cdot)) \pi^{(l_0)}(x_{l_0},dx).\]
We conclude by combining the first limit result \eqref{ineqlb1} and inequality \eqref{ineqlb2}, which works by Fatou's Lemma, using monotonicity of $\rho_n$ and the fact that $\rho_n(c) \geq c$ for all $c\in \mathbb{R}$.

\textbf{First Step:}
We show \eqref{ineqlb1} for all $x\in \Phi$ and $x_1,...,x_{l_0} \in E$, where $\Phi$ and the required further subsequence is specified later.

We can without loss of generality choose $\nu_0 \in \mathcal{P}(E)$ such that
\[
-\infty < \sup_{\nu \in \mathcal{P}(E)}\left( F(\nu) - I(\nu)\right) \leq F(\nu_0)-I(\nu_0) + \varepsilon < \infty,
\]
since if the supremum equals $-\infty$, there is nothing to show.
Then
\[\inf_{q: \nu_0 q = \nu_0} 
\int_E \beta(q(x),\pi(x)) \nu_0(dx) = I(\nu_0) < \infty. \]	
Choose a stochastic 
kernel $p$ on $E$ such that
\[
\infty > I(\nu_0) + \varepsilon \geq \int_E \beta(p(x),\pi(x)) 
\nu_0(dx) = \beta_2^{\nu_0}(\nu_0 \otimes p).
\]
By (B.3), we can choose a Borel set $N \subseteq E$ with $\nu_0(N) = 0$ 
such that $p(x) \ll \pi(x)$ for all $x\in N^C$.
Define the stochastic kernel $p_0$ on $E$ by $p_0(x) := 
\eins_{N}(x) \pi(x) + \eins_{N^C}(x) p(x)$ for $x\in E$ 
and find that
\[
\infty > I(\nu_0) + \varepsilon \geq \beta_2^{\nu_0}(\nu_0 \otimes p) = \beta_2^{\nu_0}(\nu_0 \otimes p_0).
\]	
It holds $p_0(x) \ll \pi(x)$ for all $x\in E$. Next, we will replace $\nu_0$ and $p_0$ by $\nu_1$ and $p_1$, such that $F(\nu_1) + \beta_2^{\nu_1}(\nu_1 \otimes p_1) \geq 
F(\nu_0) + \beta_2^{\nu_0}(\nu_0 \otimes p_0) - 
2\varepsilon$ and additionally $p_1$ is point-wise equivalent to $\pi$.

By Condition (M.1) and (M.2), $\pi$ has a unique invariant measure,
denoted by $\mu^*$ (See Lemma 8.6.2.~(a) of \cite{dupuis2011weak}).
By lower boundedness of $\beta$ we can choose $\kappa_0 \in (0,1)$ such 
that \[ (1-\kappa_0)\beta_2^{\nu_0}(\nu_0\otimes p_0) \leq 
\beta_2^{\nu_0}(\nu_0\otimes p_0)+\varepsilon.\]
By continuity of $F$, we can further choose $\kappa_1 > 0$ such that for all $0 \leq  \hat{\kappa} \leq \kappa_1$
\[F((1-\hat{\kappa})\nu_0 + \hat{\kappa} \mu^*) \geq F(\nu_0)-\varepsilon.\]

Choose $\kappa := \min \{\kappa_1,\kappa_2\}$ and define $\nu_1 := 
(1-\kappa) \nu_0 + \kappa \mu^*$ and 
\[
p_1(x) = \frac{d\nu_0}{d\nu_1}(x) (1-\kappa) p_0(x) + \frac{d\mu^*}{d\nu_1}(x) \kappa \pi(x).
\] 
Then one quickly checks $\nu_1 \otimes p_1 = (1-\kappa) (\nu_0 \otimes p_0) + \kappa (\mu^* \otimes \pi)$. By convexity of $\beta_2^{\cdot}(\cdot)$
\begin{align*}
\beta_2^{\nu_1}(\nu_1\otimes p_1) \leq 
(1-\kappa)\beta_2^{\nu_0}(\nu_0\otimes p_0) + \kappa 
\beta_2^{\mu^*}(\mu^*\otimes \pi) \leq \beta_2^{\nu_0}(\nu_0\otimes 
p_0) + \varepsilon.
\end{align*}
and thus
\[F(\nu_1) + \beta_2^{\nu_1}(\nu_1 \otimes p_1) \geq 
F(\nu_0) + \beta_2^{\nu_0}(\nu_0 \otimes p_0) - 
2\varepsilon.\]
Since $\beta_2^{\nu_1}(\nu_1 \otimes p_1) < \infty$, without loss of generality $p_1(x) \ll \pi(x)$ for all $x \in E$. By Lemma \ref{862c} (which yields $\nu_1 \ll \mu^*$, and hence $\frac{d\mu^*}{d\nu_1}(x) > 0$ for $\nu_1$-almost all $x\in E$) and by construction of $p_1$, it also holds $\pi(x) \ll p_1(x)$, again without loss of generality for all $x\in E$.

So $p_1$ also satisfies (M.1), as every kernel which is point-wise equivalent to $\pi$ satisfies (M.1), notably with the same constants $l_0$ and $n_0$.

It follows that the Markov chain with initial distribution $\nu_1$ and 
transition kernel $p_1$ is ergodic (See Lemma 8.6.2.~(a) of 
\cite{dupuis2011weak}). The 
point-wise Ergodic Theorem\footnote{\label{footergodic}On both Ergodic Theorems used, see Appendix A.4 of \cite{dupuis2011weak} or references therein, i.e.~\cite[Corollaries 6.23 and 6.25.]{breiman1992probability}} yields that the sequence 
\[(\mu^{(n)})_{n\in\mathbb{N}} := (\nu_1 \otimes 
\left(\bigotimes_{i=1}^{n-1} p_1\right))_{n\in\mathbb{N}}\] satisfies 
the conditions for Lemma $\ref{weakcon}$ and thus $\mu^{(n)}\circ 
L_n^{-1} \stackrel{w}{\rightarrow} \delta_{\nu_1}$. This yields 
\begin{align}
\label{eqlim1}
\lim_{n\rightarrow \infty} \int_{E^n} \left| F\circ L_n - 
F(\nu_1)\right| d\mu^{(n)} = \lim_{n\rightarrow \infty} 
\int_{\mathcal{P}(E)} \left| F - F(\nu_1)\right| d\mu^{(n)} \circ 
L_n^{-1} = 0.
\end{align}

Let $(X_n)_{n\in\mathbb{N}}$ be a sequence of $E$-valued random variables such that $(X_1,...,X_n) \sim \mu^{(n)}$ for all $n\in\mathbb{N}$.
We see 
\begin{align*}
\mathbb{E}\left[\beta(p_1(X_1),\pi(X_1))\right] &= 
\beta_2^{\nu_1}(\nu_1 \otimes p_1), \\
\mathbb{E}\left[\left| 
\beta(p_1(X_1),\pi(X_1))\right|\right] &\leq \left| 
\min_{x\in \mathcal{P}(E)^2} \beta(x)\right| + \beta_2^{\nu_1}(\nu_1 
\otimes p_1) < \infty
\end{align*}
and thus by the $L_1$-ergodic theorem\footnotemark[6]:
%\footnotemark[\ref{footergodic}]:
\begin{align}
\label{eqlim2}
\lim_{n\rightarrow \infty} \mathbb{E}\left[ \left| \frac{1}{n} 
\sum_{i=1}^{n-1} 
\beta(p_1(X_i),\pi(X_i))-\beta_2^{\nu_1}(\nu_1 
\otimes p_1)\right|\right] &= 0 \\
\label{eqlim3}
\Leftrightarrow
\lim_{n\rightarrow \infty}\int_{E^n} \left| \frac{1}{n} \sum_{i=1}^{n-1} 
\beta(p_1(x_i),\pi(x_i)) - \beta_2^{\nu_1}(\nu_1 \otimes 
p_1)\right| \mu^{(n)}(dx_1,...,dx_n) &= 0.
\end{align}
For $\theta \in \mathcal{P}(E)$ and a stochastic kernel $q:E\rightarrow \mathcal{P}(E)$ we define
\begin{align*}
(\mu^{(\theta,q,n)})_{n\in\mathbb{N}} &:= (\theta \otimes 
\left(\bigotimes_{i=1}^{n-1} q\right))_{n\in\mathbb{N}}.\\
\end{align*}

By the above limits \eqref{eqlim1} and \eqref{eqlim3} and by the fact that $L_1$-convergence implies almost-sure convergence of a subsequence, we can choose a Borel set $\Phi \subseteq E$, $\nu_1(\Phi) = 1$ such that for all $x\in \Phi$ and a subsequence (again labeled by $n\in\mathbb{N}$) it holds
\begin{align}
\label{Feqdx}
\lim_{n\rightarrow \infty} \int_{E^n} \left| F\circ L_n - F(\nu_1) \right| \mu^{(\delta_x,p_1,n)} = 0,
\end{align}
and
\begin{align}
&\lim_{n\rightarrow \infty} \int_{E^n} \left| \frac{1}{n} \sum_{i=1}^{n-1} \beta(p_1(x_i),\pi(x_i)) - \beta_2^{\nu_1}(\nu_1 \otimes p_1)\right| \mu^{(\delta_x,p_1,n)}(dx_1,...,dx_n) = 0,\\
\label{betaeqdx}
\Rightarrow &\lim_{n\rightarrow \infty} \beta_n^{\delta_x}(\mu^{(\delta_x,p_1,n)}) = \beta_2^{\nu_1}(\nu_1\otimes p_1).
\end{align}
Since $\nu_1$ and $\mu^*$ are equivalent by Lemma \ref{862b}, $\mu^*(\Phi) = 1$. Since $\mu^*(\Phi) = 1$, it holds $\pi^{(l_0)}(\Phi) = 1$, as otherwise Lemma \ref{862b} would imply $\mu^*(\Phi^C) > 0$. So we found the set $\Phi$ mentioned at the beginning of the proof and the required subsequence. It remains to show \eqref{ineqlb1} for all $x\in \Phi$ and $x_1,...,x_{l_0} \in E$.

Let $x_1,...,x_{l_0} \in E$. By \eqref{Feqdx}, dominated convergence and the triangle inequality, it holds
\begin{align*}
&\int_{E^{n-l_0}} \left| F\circ L_n(x_1,...,x_n) - F(\nu_1) \right| \mu^{(\delta_x,p_1,n-l_0)}(dx_{l_0+1},...,dx_n) \\
&\leq \int_{E^{n-l_0}} \left| F\circ L_n(x_1,...,x_n) - F\circ L_{n-l_0}(x_{l_0+1},...,x_n)\right| \mu^{(\delta_x,p_1,n-l_0)}(dx_{l_0+1},...,dx_n)\\ &+ \int_{E^{n-l_0}} \left| F\circ L_{n-l_0}(x_{l_0+1},...,x_n) - F(\nu_1)\right| \mu^{(\delta_x,p_1,n-l_0)}(dx_{l_0+1},...,dx_n) \\&\rightarrow 0,
\end{align*}
since $F$ is continuous and $\|L_n(x_1,...,x_{l_0},\cdot) - L_{n-l_0}\|_v \leq 2 l_0/n \rightarrow 0$, where $\|\cdot\|_v$ denotes the total variation norm.
Thus
\begin{align}
\int_{E^{n-l_0}} F\circ L_n(x_1,...,x_n) \mu^{(\delta_x,p_1,n-l_0)}(dx_{l_0+1},...,dx_n) \rightarrow F(\nu_1).
\end{align}
Finally, it follows
\begin{align*}
&\liminf_{n\rightarrow \infty} \frac{1}{n} \rho_{n-l_0}^{\delta_x}(n F \circ L_{n-l_0}(x_1,...x_{l_0},\cdot))\\ &= \liminf_{n\rightarrow \infty} \sup_{\nu \in \mathcal{P}(E^{n-l_0})} \left( \int_{E^{n-l_0}} F \circ L_n(x_1,...,x_n) \nu(dx_{l_0+1},...,dx_n) - \beta_{n-l_0}^{\delta_x}(\nu) \right)\\
&\geq \liminf_{n\rightarrow \infty} \left(\int_{E^{n-l_0}} F \circ L_n(x_1,...,x_n) \mu^{(\delta_x,p_1,n-l_0)}(dx_{l_0+1},...,dx_n) - \beta_{n-l_0}^{\delta_x}(\mu^{(\delta_x,p_1,n-l_0)})\right)\\
&= F(\nu_1) - \beta_2^{\nu_1}(\nu_1 \otimes p_1)\\
& \geq \sup_{\nu\in\mathcal{P}(E)}\left( F(\nu) - I(\nu)\right) - 3 \varepsilon.
\end{align*}

\textbf{Second Step:}
First, define $g_n : E^{l_0} \rightarrow \mathbb{R}$ for $n > l_0$ by
\[
g_n(x_1,...,x_{l_0}) = \int_{\Phi} \rho_{n-l_0}^{\delta_x}(n F \circ L_n(x_1,...,x_{l_0},\cdot)) \pi(x_{l_0},dx).
\]
Then $g_n$ is upper semi-analytic, since $(x,x_1,...,x_{l_0}) \mapsto \rho_{n-l_0}^{\delta_x}(n F \circ L_n(x_1,...,x_{l_0},\cdot))$ is (by \cite[Prop. 7.47 and 7.48]{bertsekas2004stochastic}, see also Lemma \ref{ughselection}) and thus $g_n$ is as well (by \cite[Prop. 7.48]{bertsekas2004stochastic}).

By Fatou's Lemma (applicable since $|\frac{1}{n}\rho_n^{\delta_x}(nF\circ L_n)| \geq -\|F\|_{\infty}$), for all $x_1,...,x_{l_0} \in E$, it holds
\[\liminf_{n\rightarrow \infty} \frac{1}{n} g_n(x_1,...,x_n) \geq  \int_{\Phi} \liminf_{n\rightarrow \infty} \frac{1}{n} \rho_{n-l_0}^{\delta_x}(n F \circ L_n(x_1,...,x_{l_0},\cdot)) \pi(x_{l_0},dx) \geq \sup_{\nu\in\mathcal{P}(E)}(F(\nu)-I(\nu))-3\varepsilon.\]
We define the sets
\[
\Omega_n := \left\{(x_1,...,x_{l_0})\in E^{l_0} :\frac{1}{n} g_j(x_1,...,x_{l_0}) \geq \sup_{\nu\in\mathcal{P}(E)}(F(\nu)-I(\nu))-4\varepsilon \text{ for all } j \geq n\right\}
\]
for $n\in \mathbb{N}$, which are universally measurable and satisfy $\Omega_1 \subseteq \Omega_2 \subseteq \Omega_3 ...$ and
$
\cup_{i=1}^{\infty} \Omega_i = E^{l_0}.
$
For $n\in \mathbb{N}$ let $p_n := \mu^{\pi_0,\pi,l_0}(\Omega_n)$. Then by continuity from below it holds $p_n \rightarrow 1$ for $n\rightarrow \infty$.
%Note that for $c_1,c_2 \in \mathbb{R}$ and a Borel set $A \subseteq E$, using (B.1)
%\[
%\rho_{l_0}(\eins_A c_1 + \eins_{A^C} c_2) = \sup_{\nu \in \mathcal{P}(E^{l_0})}\left( c_1 \nu(A) + c_2 \nu(A^C) - \beta_{l_0}^{\pi_0}(\nu)\right) \geq c_1 \mu^{\pi_0,\pi,l_0}(A) + c_2 \mu^{\pi_0,\pi,l_0}(A^C).
%\]
We have by Lemma \ref{allpi0lower}, Lemma \ref{ughselection} and monotonicity of $\rho_{l_0}$
\begin{align*}
\liminf_{n\rightarrow \infty} \frac{1}{n} \rho_n(nF\circ L_n) &\geq \liminf_{n\rightarrow \infty} \frac{1}{n} \rho_{l_0}(g_n)\\
&\geq \liminf_{n\rightarrow \infty} \frac{1}{n} \rho_{l_0}(\eins_{\Omega_n}  n(\sup_{\nu \in \mathcal{P}(E)} (F(\nu)-I(\nu)) - 4\varepsilon) - \eins_{\Omega_n^C}  n \|F\|_{\infty})\\
&\geq \liminf_{n\rightarrow \infty} \left(p_n  \left(\sup_{\nu \in \mathcal{P}(E)} (F(\nu)-I(\nu)) - 4\varepsilon\right) - (1-p_n)  \|F\|_{\infty}\right)\\
&= \sup_{\nu \in \mathcal{P}(E)} (F(\nu)-I(\nu)) - 4\varepsilon,
\end{align*}
where the last inequality uses $\beta(\nu,\nu) = 0$ for all $\nu \in \mathcal{P}(E)$.
\qed

\subsection{Main Theorem Upper Bound}
\label{subsec::UpperBound}
\subsubsection{Preliminary Results}
\label{subsubsec::UpperBoundPre}
\begin{lemma}
	\label{tupeltight}
	Let $\hat{E}$ be another Polish space, $(X_n)_{n\in\mathbb{N}}$ be an 
	$E$-valued sequence of random variables and $(Y_n)_{n\in\mathbb{N}}$ be a 
	$\hat{E}$-valued sequence of random variables. If both 
	$(X_n)_{n\in\mathbb{N}}$ and $(Y_n)_{n\in\mathbb{N}}$ are tight, then 
	$\left((X_n,Y_n)\right)_{n\in\mathbb{N}}$ is also tight.
	\begin{proof}
		Let $\varepsilon > 0$ and choose $A \subseteq E$ and $B \subseteq \hat{E}$ 
		both compact such that
		\begin{align*}
		\mathbb{P}(X_n \in A) & \geq 1-\varepsilon \\
		\mathbb{P}(Y_n \in B) & \geq 1-\varepsilon 
		\end{align*}
		holds for all $n\in\mathbb{N}$. Then $A\times B$ is compact in $E\times 
		\hat{E}$, and
		\[
		\mathbb{P}((X_n,Y_n)\in A\times B) = \mathbb{P}(X_n \in A) - \mathbb{P}((X_n,Y_n)\in A\times B^C) \geq \mathbb{P}(X_n \in A) - \mathbb{P}(Y_n \in B^C) \geq 1-2\varepsilon.
		\]\end{proof}
\end{lemma}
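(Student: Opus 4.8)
The plan is to combine the standard compact-set characterization of tightness with the fact that a finite product of compact metric spaces is compact. Fix $\varepsilon > 0$. First I would use tightness of $(X_n)_{n\in\mathbb{N}}$ to select a compact set $A \subseteq E$ with $\mathbb{P}(X_n \notin A) \leq \varepsilon/2$ for all $n$, and, independently, tightness of $(Y_n)_{n\in\mathbb{N}}$ to select a compact set $B \subseteq \hat{E}$ with $\mathbb{P}(Y_n \notin B) \leq \varepsilon/2$ for all $n$. Crucially, $A$ and $B$ are chosen uniformly in $n$. Since $E$ and $\hat{E}$ are metric (being Polish), $A \times B$ is a compact subset of the product space $E \times \hat{E}$.

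Next I would bound the escape probability of the pair by a union bound. Because $\{(X_n,Y_n) \notin A\times B\} = \{X_n \notin A\} \cup \{Y_n \notin B\}$, we get
\[
\mathbb{P}\bigl((X_n,Y_n) \notin A\times B\bigr) \leq \mathbb{P}(X_n \notin A) + \mathbb{P}(Y_n \notin B) \leq \varepsilon
\]
for every $n$, i.e.\ $\mathbb{P}((X_n,Y_n)\in A\times B) \geq 1-\varepsilon$ for every $n$. As $\varepsilon$ was arbitrary and $A\times B$ is compact, this is precisely the definition of tightness of $((X_n,Y_n))_{n\in\mathbb{N}}$, which completes the argument.

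There is no real obstacle here; the lemma is elementary. The only two points worth stating explicitly are that the compact sets $A$ and $B$ must be chosen before (and hence independently of) $n$, so that the final bound is uniform in $n$, and that compactness of $A\times B$ in the product topology is immediate for metric spaces. One could equivalently phrase everything in terms of the laws $\mathbb{P}\circ (X_n,Y_n)^{-1}$ on $\mathcal{P}(E\times\hat{E})$, but the random-variable formulation is the cleanest.
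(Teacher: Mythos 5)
Your proposal is correct and follows essentially the same argument as the paper: pick compact sets $A$ and $B$ uniformly in $n$ for the two marginal sequences, note that $A\times B$ is compact in the product, and bound the escape probability of the pair. The only cosmetic difference is that you use a union bound with $\varepsilon/2$ budgets, while the paper keeps $\varepsilon$ for each marginal and ends with $1-2\varepsilon$; the substance is identical.
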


The following theorem is essential for the proof the upper bound. It is 
based on Proposition 8.2.5 and Theorem 8.2.8 in \cite{dupuis2011weak}.
\begin{theorem}
	\label{Tool1}
	Assume (T) and let $(\mu^{(n)})_{n\in\mathbb{N}} \subseteq \mathcal{P}(E^n)$ be a sequence of measures such that 
	\[
	\sup_{n\in\mathbb{N}} \frac{1}{n}\beta^{\pi_0}_n(\mu^{(n)}) < \infty.
	\]
	For $n\in\mathbb{N}$, let $X_n = (X_{n,1},...,X_{n,n})$ be $E^n$-valued 
	random variables with distribution $\mu^{(n)}$.
	Define the sequence of $\mathcal{P}(E\times E)$-valued random variables 
	$(\gamma_n)_{n\in \mathbb{N}}$ by
	\[
	\gamma_{n-1} := \frac{1}{n-1}\sum_{i=1}^{n-1} \delta_{X_{n,i}} \otimes 
	\mu^{(n)}_{i,i+1}(X_{n,1},...,X_{n,i}).
	\]
	It holds:
	\begin{itemize}
		\item[(i)] $(\gamma_n)_{n\in\mathbb{N}}$ is tight.
		\item[(ii)] For every convergent (in distribution) subsequence of 
		$(\gamma_n)_{n\in\mathbb{N}}$, there exists a probability space 
		$(\bar{\Omega},\bar{\mathcal{F}},\bar{\mathbb{P}})$, 
		such that on this space, there exist random variables 
		$\bar{\gamma}_n \sim \gamma_n$ and $\bar{\gamma}\sim \gamma $ 
		with $\bar{\gamma}_n \stackrel{w}{\rightarrow} \bar{\gamma}$ 
		$\bar{\mathbb{P}}$-a.s.. Further, $\bar{\gamma}^{(1)} = 
		\bar{\gamma}^{(2)}$ $\bar{\mathbb{P}}$-a.s., where $\bar{\gamma}^{(1)}$ and $\bar{\gamma}^{(2)}$ are the first and second marginals of $\bar{\gamma}$.
	\end{itemize}
	\begin{proof}
		For the proof of (i), there is nothing to show if (T.1') holds. So we only consider the case that (T.1) holds. Define the sequence of first marginals 
		$(\tilde{L}_n)_{n\in\mathbb{N}} := (\gamma_{n}^{(1)})_{n\in\mathbb{N}}$.
		We first show that $(\tilde{L}_n)_{n\in\mathbb{N}}$ is tight.
		The idea is to use (T.1) which yields a tightness function $c$ on $E$ 
		defined by 
		\[
		c(x) := U(x) - \rho^{\pi(x)}(U),
		\]
		and thus a tightness function $G$ on $\mathcal{P}(E)$ defined by
		\[
		G(\theta) := \int_E c d \theta,
		\]
		where we refer to Appendix A.3.17 of \cite{dupuis2011weak} and the 
		preceding 
		definition, as well as Lemma 8.2.4 of \cite{dupuis2011weak} for 
		properties of a 
		tightness function.
		In the following, we show that $\mathbb{E}\left[\int_E c d \tilde{L}_n\right] \leq K 
		\in \mathbb{R}$ uniformly in $n \in \mathbb{N}$, which is sufficient to yield the 
		claim since 
		\[
		\mathbb{E}\left[\int_E c d\tilde{L}_n \right] = \int_{\mathcal{P}(E)} 
		\left(\int_E c d\theta\right) \mathbb{P}\circ\tilde{L}_n^{-1}(d\theta)
		\]
		and the set $\{Q\in \mathcal{P}(\mathcal{P}(E)) : \int_{\mathcal{P}(E)} 
		G(\theta) Q(d\theta) \leq M \}$ is tight for every $M\in \mathbb{R}$ by 
		Lemma 8.2.4 of \cite{dupuis2011weak}.
		
		In a first step, we assume that $U$ is bounded. Then
		for all $x\in E$, by definition of $\rho^{\pi(x)}$, it holds
		\begin{align}
		\label{eqhl1}
		\forall \nu \in \mathcal{P}(E): \beta(\nu,\pi(x)) \geq \int_E U 
		d\nu -\rho^{\pi(x)}(U).
		\end{align}
		
		For $i\in \{1,2,...,n-1\}$, $\mu_{i,i+1}^{(n)}(X_{n,1},...,X_{n,i})$ is 
		a regular conditional distribution of $X_{n,i+1}$ given 
		$\sigma(X_{n,1},...,X_{n,i})$ and therefore
		\[
		\mathbb{E}[U(X_{n,i+1}) | X_{n,1},...,X_{n,i}] = \int_E U
		d\mu_{i,i+1}^{(n)}(X_{n,1},...,X_{n,i}).\footnote{See for example 
			\cite{dudley2002real} Theorem 10.2.5., where we used that $U$ is 
			bounded.}
		\]
		We calculate
		\begin{align*}
		\mathbb{E}[U(X_{n,i+1}) - U(X_{n,i})]
		&= \mathbb{E}\left[\mathbb{E}[U(X_{n,i+1}) | 
		X_{n,1},...,X_{n,i}]-U(X_{n,i})\right] \\
		&= \mathbb{E}\left[\int_E U d\mu_{i,i+1}^{(n)}(X_{n,1},...,X_{n,i}) - 
		U(X_{n,i})\right]\\
		&= \mathbb{E}\left[\int_E U d\mu_{i,i+1}^{(n)}(X_{n,1},...,X_{n,i}) - 
		\rho^{\pi(X_{n,i})}\right] + 
		\mathbb{E}\left[\rho^{\pi(X_{n,i})} - U(X_{n,i})\right]\\
		&\stackrel{\eqref{eqhl1}}{\leq} 
		\mathbb{E}\left[\beta(\mu_{i+1,i}^{(n)}(X_{n,1},...,X_{n,i}),\pi(X_{n,i}))\right]
		- \mathbb{E}\left[c(X_{n,i})\right]
		\end{align*}
		Summing the above inequalities over $i\in\{1,2,...,n-1\}$ gives
		\begin{align*}
		\mathbb{E}\left[U(X_{n,n}) - U(X_{n,1})\right] &\leq \sum_{i=1}^{n-1} 
		\left( 
		\mathbb{E}\left[\beta(\mu_{i,i+1}^{(n)}(X_{n,1},...,X_{n,i}),\pi(X_{n,i}))\right]
		- \mathbb{E}\left[c(X_{n,i})\right] \right) \\
		\Rightarrow \sum_{i=1}^{n-1} \mathbb{E}\left[c(X_{n,i})\right] &\leq 
		\mathbb{E}\left[U(X_{n,1})\right] + \sum_{i=1}^{n-1} 
		\mathbb{E}\left[\beta(\mu_{i,i+1}^{(n)}(X_{n,1},...,X_{n,i}),\pi(X_{n,i}))\right],
		\end{align*}
		where $\mathbb{E}[U(X_{n,n})] \geq 0$ is used. Dividing the above 
		inequality by $(n-1)$, one obtains
		\begin{align*}
		\mathbb{E}\left[\int_E c d\tilde{L}_{n-1}\right] &= \frac{1}{n-1} 
		\sum_{i=1}^{n-1} \mathbb{E}[c(X_{n,i})] \\
		&\leq \frac{1}{n-1} \left(\mathbb{E}\left[U(X_{n,1})\right] + 
		\sum_{i=1}^{n-1} 
		\mathbb{E}\left[\beta(\mu_{i,i+1}^{(n)}(X_{n,1},...,X_{n,i}),\pi(X_{n,i}))\right]\right)\\
		&\leq 
		\frac{1}{n-1} \left(\beta(\mu_{0,1}^{(n)},\pi_0) + \rho^{\pi_0}(U) + 
		\sum_{i=1}^{n-1} 
		\mathbb{E}\left[\beta(\mu_{i,i+1}^{(n)}(X_{n,1},...,X_{n,i}),\pi(X_{n,i}))\right]\right)\\
		&= \frac{1}{n-1} \beta^{\pi_0}_n(\mu^{(n)}) + 
		\frac{1}{n-1}\rho^{\pi_0}(U).
		\end{align*}
		The last term of the above inequality chain is uniformly bounded for 
		all $n \geq 2$ by assumption and part (c) of (T.1), and we denote this 
		bound by $K\in\mathbb{R}$. 
		
		Now, let us show the above for unbounded U. Let $U_k := U\wedge k$ (for 
		$k\in \mathbb{N}$) and $c_k(x) := U_k(x) - \rho^{\pi(x)}(U_k)$.
		We have shown
		\[ 
		\mathbb{E}\left[\int_E c_k d\tilde{L}_{n-1}\right] \leq \frac{1}{n-1} 
		\beta_n^{\pi_0}(\mu^{(n)}) + 
		\frac{1}{n-1}\rho^{\pi_0}(U_k) \leq \frac{1}{n-1} 
		\beta^{\pi_0}_n(\mu^{(n)}) + 
		\frac{1}{n-1}\rho^{\pi_0}(U).
		\]
		One quickly verifies that $c_k \geq c\wedge\left(\inf_{\tau \in 
			\mathcal{P}(E)^2} \beta(\tau)\right)$,\footnote{We separately look at the cases $U(x) \leq k$ and $U(x) \geq k$. It 
			holds $c_k(x) \geq \inf_{\tau \in 
				\mathcal{P}(E)^2} \beta(\tau)$, if $U(x) \geq k$, and $c_k(x) \geq 
			c(x)$, if $U(x) \leq k$.} which is bounded below by a constant by lower boundedness of $\beta$ and (T.1). Further for all $x\in E$, it holds $c(x) = \lim_{k\rightarrow 
			\infty} c_k(x)$ by monotone convergence and
		%\footnote{Follows since $\liminf_{k\rightarrow 
		%		\infty} c_k(x) = -\lim_{n\rightarrow \infty} \sup_{k\geq n \wedge U(x)} \sup_{\nu \in \mathcal{P}(E)}(-U(x) + \int_E U\wedge kd\nu - \beta(\nu,\pi(x)))$ and one can switch suprema and get the result by monotone convergence. Note also that the second supremum can be restricted to $\nu \in \mathcal{P}(E)$ such that $\beta(\nu,\pi(x)) < \infty$, so there are no difficulties with infinite values.} and 
		therefore by Fatou's Lemma
		\[
		\mathbb{E}\left[\int_E c d\tilde{L}_{n-1}\right] \leq 
		\liminf_{k\rightarrow \infty} \mathbb{E}\left[\int_E c_k 
		d\tilde{L}_{n-1}\right] \leq \frac{1}{n-1} 
		\beta^{\pi_0}_n(\mu^{(n)}) + 
		\frac{1}{n-1}\rho^{\pi_0}(U) \leq K.
		\]
		This shows $(\tilde{L}_n)_{n\in\mathbb{N}}$ is tight.
		
		Next, we show that the sequence of second marginals of 
		$(\gamma_n)_{n\in\mathbb{N}}$ is tight, i.e.~we prove tightness of the 
		sequence $(\gamma_n^{(2)})_{n\in\mathbb{N}}$ given by $\gamma_{n-1}^{(2)} = 
		\frac{1}{n-1} \sum_{i=1}^{n-1} \mu_{i,i+1}^{(n)}(X_{n,1},...,X_{n,i})$.
		This follows from
		\begin{align*}
		\mathbb{E}\left[\int_E c d\gamma_n^{(2)}\right] &= \frac{1}{n}\sum_{i = 
			1}^n 
		\mathbb{E}\left[\int_E c 
		d\mu_{i,i+1}^{(n+1)}(X_{n+1,1},...,X_{n+1,i})\right]\\
		&\stackrel{(*)}{=} \frac{1}{n}\sum_{i=1}^{n} 
		\mathbb{E}\left[\mathbb{E}\left[c(X_{n+1,i}) | 
		\mathcal{F}_i^{n+1}\right]\right] \\
		&= \frac{1}{n}\sum_{i=1}^n \mathbb{E}\left[ c(X_{n+1,i})\right]\\
		&= \mathbb{E}\left[\int_E c d\tilde{L}_n\right] \leq K,
		\end{align*}
		where the last inequality is uniformly in $n\in\mathbb{N}$ as shown 
		above. Note that while equality $(*)$ requires integrability, we can 
		circumvent this requirement by the same argumentation as above, in that 
		we first assume $U$ to be bounded and use Fatou's Lemma for the 
		transition to the general case.
		
		Tightness of $(\gamma_n)_{n\in\mathbb{N}}$ now follows from tightness 
		of the marginals $(\gamma_n^{(2)})_{n\in\mathbb{N}}$ and 
		$(\tilde{L}_n)_{n\in\mathbb{N}}$, see Lemma \ref{tupeltight}. \\
		
		For part (ii), choose any subsequence still denoted by
		$(\gamma_n)_{n\in\mathbb{N}}$ that converges in distribution, which 
		means there exists a $\mathcal{P}(E\times E)$ valued random variable 
		$\gamma$ such that
		\[
		\mathbb{P}\circ \gamma_n^{-1}  \stackrel{w}{\rightarrow} 
		\mathbb{P}\circ \gamma^{-1}.
		\]
		
		With Skorohod's representation theorem (see e.g.~\cite[Page 102]{ethier1986markov}), we can go over 
		to a 
		probability space 
		$(\bar{\Omega},\bar{\mathcal{F}},\bar{\mathbb{P}})$ such 
		that on this space, there exist random variables $\bar{\gamma}_n 
		\sim \gamma_n$ and $\bar{\gamma}\sim \gamma$ with 
		$\bar{\gamma}_n \stackrel{w}{\rightarrow} \bar{\gamma}$ 
		$\bar{\mathbb{P}}$-a.s..
		
		It only remains to show that $\bar{\gamma}^{(1)} = 
		\bar{\gamma}^{(2)}$ holds $\bar{\mathbb{P}}$-a.s.. Since 
		$\mu_{i,i+1}^{(n)}(X_{n,1},...,X_{n,i})$ is a regular conditional 
		distribution of $X_{n,i+1}$ given $X_{n,1},...,X_{n,i}$, it holds
		\[
		\label{eq2hl}
		\mathbb{E}\left[\left(f(X_{n,i+1}) - \int_E f 
		d\mu_{i,i+1}^{(n)}(X_{n,1},...,X_{n,i})\right) ~\middle|~ 
		X_{n,1},...,X_{n,i} \right] = 0
		\]
		for $f \in C_b(E)$, $n\in\mathbb{N}, i\in\{1,...,n-1\}$. That means the 
		terms 
		inside the expectation form (for fixed $n$) a martingale difference 
		sequence. For ease of notation, we write 
		\begin{align*}
		a_{n,i} &:= f(X_{n,i}), \\
		b_{n,i} &:= \int_E f d\mu_{i-1,i}^{(n)}(X_{n,1},...,X_{n,i-1}).
		\end{align*}
		and get for $n \geq 2$,
		\begin{align*}
		&\bar{\mathbb{E}}\left[\left(\int_E f d \bar{\gamma}_{n-1}^{(1)}
		- \int_E f d \bar{\gamma}_{n-1}^{(2)}\right)^2 \right]\\		
		&=\mathbb{E}\left[\left(\int_E f d \gamma_{n-1}^{(1)} - \int_E f d 
		\gamma_{n-1}^{(2)}\right)^2 \right] \\
		&= \mathbb{E}\left[\left( \frac{1}{n-1} \sum_{i=1}^{n-1} a_{n,i} - 
		b_{n,i+1}\right)^2\right] \\
		%&= \frac{1}{(n-1)^2} \mathbb{E}\left[ \sum_{j=1}^{n-1} 
		%\left(f(X_{n,j}) - \int_E f 
		%d\mu_{j,j+1}^{(n)}(X_{n,1},...,X_{n,j}\right)^2\right]\\
		%&\leq \frac{4 \|f\|_{\infty}^2}{(n-1)}.
		&= \frac{1}{(n-1)^2} \mathbb{E}\left[ 
		\left((a_{n,1}-b_{n,n})+\left(\sum_{i=1}^{n-1} 
		a_{n,i}-b_{n,i}\right)\right)^2\right]\\
		&= \frac{1}{(n-1)^2} \mathbb{E}\left[ (a_{n,1}-b_{n,n})^2 + 
		2(a_{n,1}-b_{n,n})\left(\sum_{i=1}^{n-1} a_{n,i}-b_{n,i}\right) + 
		\left(\sum_{i=1}^{n-1} (a_{n,i}-b_{n,i})^2\right) \right] \\
		&\leq \frac{4+8(n-1)+4(n-1)}{(n-1)^2} \|f\|_{\infty}^2,
		\end{align*}
		which converges to $0$ for $n\rightarrow \infty$.
		By the triangle inequality
		\begin{align*}
		\bar{\mathbb{E}}\left[\left(\int_E f d \bar{\gamma}^{(1)} - 
		\int_E f d\bar{\gamma}^{(2)}\right)^2\right] = 0,
		\end{align*}
		which implies $\int_E f d\bar{\gamma}^{(1)} = \int_E f 
		d\bar{\gamma}^{(2)}$ $\bar{\mathbb{P}}$-a.s. for every $f\in 
		C_b(E)$.
		
		By a similar separation argument as in Lemma \ref{weakcon}, we see that $\mathbb{\bar{P}}$-a.s.
		\[ \forall f \in \mathcal{U}_b(E,m): \int_E f d\bar{\gamma}^{(1)} = 
		\int_E f d\bar{\gamma}^{(2)},
		\]
		where $m$ is an equivalent metric on $E$ as given by \cite[Lemma 3.1.4]{stroock1993probability}. 
		Since $\mathcal{U}_b(E,m)$ is measure determining on $(E,d)$ (see Appendix A.2.2 of \cite{dupuis2011weak}), we 
		conclude $\bar{\gamma}^{(1)} = 
		\bar{\gamma}^{(2)}$ $\mathbb{\bar{P}}$-a.s..
	\end{proof}
\end{theorem}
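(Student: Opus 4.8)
Here is how I would approach proving Theorem \ref{Tool1}. The plan is to treat the two assertions with different tools: part (i) is a tightness estimate of Lyapunov (drift) type driven by condition (T), applied separately to the first and second marginal sequences of $(\gamma_n)_{n\in\mathbb N}$ and then assembled as in Lemma~\ref{tupeltight}; part (ii) is a Skorokhod coupling on the Polish space $\mathcal P(E\times E)$ combined with an $L^2$ martingale estimate that forces the two marginals of any weak limit to coincide.

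For part (i), if (T.1$'$) holds there is nothing to prove, so assume (T.1) with its function $U$. The point is that $c(x):=U(x)-\rho^{\pi(x)}(U)$ is a tightness function on $E$ by (T.1)(a)--(b), hence $\theta\mapsto\int_E c\,d\theta$ is a tightness function on $\mathcal P(E)$ in the sense of Appendix~A.3 and Lemma~8.2.4 of \cite{dupuis2011weak}; it therefore suffices to bound $\mathbb E[\int_E c\,d\tilde L_{n-1}]$ uniformly in $n$, where $\tilde L_{n-1}:=\gamma_{n-1}^{(1)}=\frac1{n-1}\sum_{i=1}^{n-1}\delta_{X_{n,i}}$. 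Assuming first that $U$ is bounded, the conditional-distribution property of $\mu^{(n)}_{i,i+1}$ gives $\int_E U\,d\mu^{(n)}_{i,i+1}(X_{n,1},\dots,X_{n,i})=\mathbb E[U(X_{n,i+1})\mid X_{n,1},\dots,X_{n,i}]$, while the dual definition of $\rho^{\pi(x)}$ yields the pointwise bound $\beta(\nu,\pi(x))\ge\int_E U\,d\nu-\rho^{\pi(x)}(U)$ for every $\nu\in\mathcal P(E)$. Plugging $\nu=\mu^{(n)}_{i,i+1}(X_{n,1},\dots,X_{n,i})$, taking expectations, summing the resulting telescoping inequalities over $i=1,\dots,n-1$, and using $U\ge0$ together with $\int_E U\,d\mu^{(n)}_{0,1}\le\beta(\mu^{(n)}_{0,1},\pi_0)+\rho^{\pi_0}(U)$, I expect to arrive at
\[
\mathbb E\Big[\int_E c\,d\tilde L_{n-1}\Big]\;\le\;\frac{1}{n-1}\,\beta_n^{\pi_0}(\mu^{(n)})\;+\;\frac{1}{n-1}\,\rho^{\pi_0}(U),
\]
which is bounded uniformly in $n\ge2$ by hypothesis and by (T.1)(c) (note $\rho^{\pi_0}=\rho$). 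The passage to unbounded $U$ should go through truncation $U_k:=U\wedge k$: one checks $c_k:=U_k-\rho^{\pi(\cdot)}(U_k)\ge c\wedge\big(\inf_{\tau\in\mathcal P(E)^2}\beta(\tau)\big)$ is bounded below and $c_k\uparrow c$, so Fatou preserves the bound. A similar conditioning computation with one extra tower step bounds $\mathbb E[\int_E c\,d\gamma_n^{(2)}]$ uniformly, so both $(\gamma_n^{(1)})_n$ and $(\gamma_n^{(2)})_n$ are tight, and tightness of $(\gamma_n)_n$ on $\mathcal P(E\times E)$ then follows exactly as in Lemma~\ref{tupeltight}, since $\{\lambda\in\mathcal P(E\times E):\lambda^{(1)}\in K_1,\ \lambda^{(2)}\in K_2\}$ is compact whenever $K_1,K_2\subseteq\mathcal P(E)$ are.

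For part (ii), given a subsequence along which $\gamma_n$ converges in distribution to some $\gamma$, Skorokhod's representation theorem provides $\bar\gamma_n\sim\gamma_n$ and $\bar\gamma\sim\gamma$ on a common space with $\bar\gamma_n\to\bar\gamma$ weakly, a.s.; it then remains to prove $\bar\gamma^{(1)}=\bar\gamma^{(2)}$ a.s. Fix $f\in C_b(E)$ and write $\int_E f\,d\gamma_{n-1}^{(1)}-\int_E f\,d\gamma_{n-1}^{(2)}=\frac1{n-1}\sum_{i=1}^{n-1}\big(f(X_{n,i})-\int_E f\,d\mu^{(n)}_{i,i+1}(X_{n,1},\dots,X_{n,i})\big)$. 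With $a_{n,i}:=f(X_{n,i})$ and $b_{n,i}:=\int_E f\,d\mu^{(n)}_{i-1,i}(X_{n,1},\dots,X_{n,i-1})$, reindexing turns this into $\frac1{n-1}\big((a_{n,1}-b_{n,n})+\sum_{i=2}^{n-1}(a_{n,i}-b_{n,i})\big)$, where $(a_{n,i}-b_{n,i})_{i\ge2}$ is a martingale difference sequence for $\sigma(X_{n,1},\dots,X_{n,i})$ with increments bounded by $2\|f\|_\infty$. Expanding the square and exploiting orthogonality of the martingale differences, the expectation of the square is $O(n)/(n-1)^2=O(1/n)\to0$; since $\lambda\mapsto\int_E f\,d\lambda^{(j)}$ is weakly continuous and bounded, dominated convergence along the Skorokhod coupling gives $\bar{\mathbb E}[(\int_E f\,d\bar\gamma^{(1)}-\int_E f\,d\bar\gamma^{(2)})^2]=0$, i.e.\ $\int_E f\,d\bar\gamma^{(1)}=\int_E f\,d\bar\gamma^{(2)}$ a.s. Finally, as in Lemma~\ref{weakcon}, I would pass to an equivalent metric $m$ with $\mathcal U_b(E,m)$ separable (Lemma~3.1.4 of \cite{stroock1993probability}), fix a countable dense subset, obtain a single null set outside which equality holds for all functions in that subset, extend to all of $\mathcal U_b(E,m)$ by uniform approximation, and conclude $\bar\gamma^{(1)}=\bar\gamma^{(2)}$ a.s.\ because $\mathcal U_b(E,m)$ is measure-determining on $(E,d)$.

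I expect part (i) to be the main obstacle. The crux is recognizing that condition (T.1) must be used through the duality inequality $\beta(\nu,\pi(x))\ge\int_E U\,d\nu-\rho^{\pi(x)}(U)$, which is what turns the merely qualitative hypothesis $\sup_n\frac1n\beta_n^{\pi_0}(\mu^{(n)})<\infty$ into a quantitative uniform bound on the expected ``energy'' $\mathbb E[\int_E c\,d\tilde L_{n-1}]$; making this rigorous forces the detour through bounded $U$ (so that the conditional-expectation identity and all intermediate integrals are meaningful) followed by a Fatou argument to remove the truncation, which is exactly where the lower bound on $c_k$ is needed. Part (ii) is comparatively routine once the martingale-difference structure is spotted, the only minor care points being the reindexing of the telescoping sum and transporting the $L^2$ bound through the coupling.
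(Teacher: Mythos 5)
Your proposal is correct and follows essentially the same route as the paper: the drift/duality inequality $\beta(\nu,\pi(x))\ge\int_E U\,d\nu-\rho^{\pi(x)}(U)$ with telescoping, truncation of $U$ plus Fatou, tightness of both marginal sequences assembled as in Lemma \ref{tupeltight}, then Skorokhod representation, the martingale-difference $L^2$ estimate of order $O(1/n)$, and the separable class $\mathcal{U}_b(E,m)$ being measure determining to identify the marginals of the limit. The only deviations are cosmetic (your reindexing of the telescoped sum and the explicit remark that measures with marginals in compact sets form a compact set), so nothing further is needed.
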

\subsubsection{Proof of Theorem \ref{mainth} Upper Bound}
Let $F: \mathcal{P}(E) \rightarrow \mathbb{R}$ be bounded and upper semi-continuous.
By definition
\[ \frac {1}{n} \rho_n(nF\circ L_n) = \sup_{\mu \in 
	\mathcal{P}(E^n)}\left( \int_{E^n} F\circ L_n d\mu - \frac{1}{n} 
\beta^{\pi_0}_n(\mu)\right).
\]
Using the boundedness of $F$, 
the lower boundedness of $\beta$ and the fact that $\beta(\nu,\nu) = 0$ for all $\nu \in \mathcal{P}(E)$, one verifies that the 
right-hand side in the above equation is bounded below by $-\| F 
\|_{\infty}$ and bounded above by $\| F \|_{\infty} + \inf_{\tau \in 
	\mathcal{P}(E)^2} |\beta(\tau)|$. Thus for each $n\in \mathbb{N}$, we can 
choose $\mu^{(n)} \in \mathcal{P}(E^n)$ such that
\begin{align}
\label{mteq1}
\frac{1}{n} \rho_n(nF\circ L_n) - \frac{1}{n} \leq \int_{E^n} F\circ 
L_n d\mu^{(n)} - \frac{1}{n} \beta^{\pi_0}_n(\mu^{(n)})
\end{align}
and
\[
\sup_{n\in\mathbb{N}} \frac{1}{n}\beta^{\pi_0}_n(\mu^{(n)}) < \infty.
\]
The latter will be used to apply Theorem \ref{Tool1} in a few moments.
First, we use $\beta(\nu,\nu) = 0$ for all $\nu \in \mathcal{P}(E)$ and convexity of $\beta_2^{\cdot}(\cdot)$ to calculate
\begin{align}
\begin{split}
\label{eq::ubeq2}
&\frac{1}{n} \beta^{\pi_0}_n(\mu^{(n)})\\
&=  \frac{1}{n} \beta(\mu_{0,1}^{(n)},\pi_0) + 
\frac{1}{n}\sum_{i=1}^{n-1} \int_{E^n} 
\beta(\mu^{(n)}_{i,i+1}(x_1,...,x_i),\pi(x_i)) \mu^{(n)}(d 
x_1,...,dx_n)\\
&= \frac{1}{n} \beta^{\pi_0}_2(\mu_{0,1}^{(n)}\otimes 
\pi) + \int_{E^n}\frac{1}{n} \sum_{i=1}^{n-1} 
\beta^{\delta_{x_i}}_2(\delta_{x_i}\otimes \mu^{(n)}_{i,i+1}(x_1,...,x_i)) 
\mu^{(n)}(dx_1,...,dx_n) \\
&\geq \int_{E^n} \beta_2^{\frac{1}{n}\left(\pi_0 + 
	\sum_{i=1}^{n-1}\delta_{x_i}\right)}\left(\frac{1}{n}\left(\mu^{(n)}_{0,1}\otimes\pi+\sum_{i=1}^{n-1}
\delta_{x_i} \otimes \mu^{(n)}_{i,i+1}(x_1,...,x_i)\right)\right)
\mu^{(n)}(dx_1,...,dx_n),\end{split}
\end{align}
where $\otimes$ denotes the product measure if both arguments are measures.

For $n\in\mathbb{N}$, let $X_n = (X_{n,1},...,X_{n,n})$ be $E^n$-valued 
random variables with distribution $\mu^{(n)}$.
Define the sequence of $\mathcal{P}(E\times E)$-valued random variables 
$(\gamma_n)_{n\in \mathbb{N}}$ by
\[
\gamma_{n-1} := \frac{1}{n-1}\sum_{i=1}^{n-1} \delta_{X_{n,i}} \otimes 
\mu^{(n)}_{i,i+1}(X_{n,1},...,X_{n,i}).
\]

For any subsequence, Theorem \ref{Tool1} (i) 
yields a  further subsequence (again labeled by $n \in \mathbb{N}$ and fixed
for the rest of the proof of the upper bound) such that 
$(\gamma_n)_{n\in\mathbb{N}}$ converges in distribution. By Theorem 
\ref{Tool1} (ii), there exists a probability space 
$(\bar{\Omega},\bar{\mathcal{F}},\bar{\mathbb{P}})$, 
such that on this space, there exist random variables 
$\bar{\gamma}_n \sim \gamma_n$ and $\bar{\gamma}\sim \gamma $ 
with $\bar{\gamma}_n \stackrel{w}{\rightarrow} \bar{\gamma}$ 
$\bar{\mathbb{P}}$-a.s.. Further, $\bar{\gamma}^{(1)} = 
\bar{\gamma}^{(2)}$ $\bar{\mathbb{P}}$-a.s., where $\bar{\gamma}^{(1)}$ and $\bar{\gamma}^{(2)}$ are the first and second marginals of $\bar{\gamma}$.

Define $(\bar{L}_n)_{n\in\mathbb{N}} := 
(\bar{\gamma}_{n,1})_{n\in\mathbb{N}}$ and $\bar{L} := 
\bar{\gamma}^{(1)}$, and note 
$\bar{L}_n \stackrel{w}{\rightarrow} \bar{L}$ 
$\mathbb{\bar{P}}$-a.s..
With these definitions, \eqref{mteq1} and \eqref{eq::ubeq2}, we get\footnote{In the formula, $\overline{X}_{n,n}$ are (redefined) random variables on $(\bar{\Omega},\bar{\mathcal{F}},\bar{\mathbb{P}})$ such that $(X_{n,n},\gamma_{n-1}) \sim (\bar{X}_{n,n},\bar{\gamma}_{n-1})$ for all $n\in \mathbb{N}$.}
\[
\frac{1}{n} \rho_n(nF\circ L_n) - \frac{1}{n} \leq 
\bar{\mathbb{E}}\left[F\left(\frac{n-1}{n}\bar{L}_{n-1} + \frac{1}{n} \delta_{\bar{X}_{n,n}}\right) - 
\beta^{\frac{\pi_0}{n} + \frac{n-1}{n} 
	\bar{L}_{n-1}}_2\left(\frac{\mu_{0,1}^{(n)}\otimes \pi}{n} + 
\frac{n-1}{n}\bar{\gamma}_{n-1} \right) \right].
\]

For ease of notation, define
\begin{align*}
t_{n,0} &:= \frac{n-1}{n}\bar{L}_{n-1} + \frac{1}{n} \delta_{\bar{X}_{n,n}},\\
t_{n,1} &:= \frac{\pi_0}{n} + \frac{n-1}{n} 
\bar{L}_{n-1}, \\
t_{n,2} &:= \frac{\mu_{0,1}^{(n)}\otimes \pi}{n} + 
\frac{n-1}{n}\bar{\gamma}_{n-1}.
\end{align*}
and note that $t_{n,0} \stackrel{w}{\rightarrow} \bar{L}$, $t_{n,1} \stackrel{w}{\rightarrow} \bar{L}$ and 
$t_{n,2} \stackrel{w}{\rightarrow} \bar{\gamma}$, all $\bar{\mathbb{P}}$-a.s..

Therefore, by upper semi-continuity of $F$ and
$-\beta_2^{\cdot}(\cdot)$, it holds
\begin{align*}
\limsup_{n\rightarrow \infty} \frac{1}{n}\rho_n(n F \circ L_n) 
&\leq \limsup_{n\rightarrow \infty} \bar{\mathbb{E}}\left[F(t_{n,0}) - \beta_2^{t_{n,1}}(t_{n,2})\right] \\
&\leq \bar{\mathbb{E}}\left[F \circ \bar{L} - 
\beta^{\bar{L}}_2(\bar{\gamma})\right]\\
& = \bar{\mathbb{E}}\left[ F\circ \bar{L} - \int_E 
\beta(\bar{\gamma}_{1,2}(x),\pi(x)) 
\bar{L}(dx)\right]\\
&\leq \sup_{\nu \in \mathcal{P}(E)}\left( F(\nu) - \inf_{q: \nu q = 
	\nu}\int_{E} \beta(q(x),\pi(x)) \nu(dx)\right),
\end{align*}
where the last inequality uses the fact that $\bar{\gamma}^{(1)} = \bar{\gamma}^{(2)}$ holds $\bar{\mathbb{P}}$-a.s.. We have shown that every subsequence has a further subsequence such 
that this inequality holds, which implies it also holds for the whole 
sequence.
\qed
\subsection{Proof of Corollary \ref{cor1}}
\label{subsec::Cor}
\textbf{Claim 1:}
	If $\beta_2^{\cdot}(\cdot)$ is lower semi-continuous, then $I$ is lower semi-continuous. If $\beta_2^{\cdot}(\cdot)$ is convex, then $I$ is convex.
	\begin{proof}
		\textbf{Lower Semi-Continuity:}
		
		Let $\nu_n \stackrel{w}{\rightarrow} \nu \in \mathcal{P}(E)$. We have to show
		\[
		\liminf_{n\rightarrow \infty} I(\nu_n) \geq I(\nu).
		\]
		Note that $I$ is bounded below. If the left hand side of the above inequality equals infinity, then there is nothing to prove. So for any subsequence we can choose a further subsequence still denoted by $(\nu_n)_{n\in\mathbb{N}}$ such that $I(\nu_n) < \infty$ for all $n$. Thus, we can choose stochastic kernels $q_n$ such that
		\[
		\beta_2^{\nu_n}(\nu_n \otimes q_n) \leq I(\nu_n) + \frac{1}{n} ~\text{ and }~ \nu_n q_n = \nu_n.
		\]
		Since $\nu_n q_n = \nu_n$ and the sequence $(\nu_n)_{n\in\mathbb{N}}$ is tight by Prokhorov, the sequence $(\nu_n \otimes q_n)_{n\in\mathbb{N}}$ is tight as well (see Lemma \ref{tupeltight}). We go over to a further subsequence still denoted by $(\nu_n\otimes q_n)_{n\in\mathbb{N}}$ such that $\nu_n\otimes q_n \rightarrow \nu \otimes q$, where $\nu q = \nu$ follows by convergence of the marginals. By lower semi-continuity of $\beta_2^{\cdot}(\cdot)$
		\[
		\liminf_{n\rightarrow \infty} I(\nu_n) \geq \liminf_{n\rightarrow \infty} \beta_2^{\nu_n}(\nu_n \otimes q_n) - \frac{1}{n} \geq \beta_2^{\nu}(\nu \otimes q) \geq I(\nu).
		\]
		
		\textbf{Convexity:}
		
		Note $I(\nu) = \inf_{\substack{\tau \in \mathcal{P}(E^2):\\\tau_1 = \tau_2 = \nu}} \beta_2^{\nu}(\tau)$. Let $\nu_1,\nu_2 \in \mathcal{P}(E)$ and $\tau^{(1)},\tau^{(2)} \in \mathcal{P}(E^2)$ with $\tau^{(1)}_1 = \tau^{(1)}_2 = \nu_1, \tau^{(2)}_1 = \tau^{(2)}_2 = \nu_2$. Then
		\begin{align*}
		\lambda \beta_2^{\nu_1}(\tau^{(1)}) + (1-\lambda) \beta_2^{\nu_2}(\tau^{(2)})
		&\geq \beta_2^{\lambda \nu_1 + (1-\lambda) \nu_2}(\lambda \tau^{(1)} + (1-\lambda) \tau^{(2)})\\
		&\geq \inf_{\substack{\tau \in \mathcal{P}(E^2):\\\tau_1 = \tau_2 = \lambda \nu_1 + (1-\lambda) \nu_2}} \beta_2^{\lambda \nu_1 + (1-\lambda) \nu_2}(\tau) = I(\lambda \nu_1 + (1-\lambda) \nu_2).
		\end{align*}
		Taking the infimum on the left hand side over all such $\tau^{(1)}$ and $\tau^{(2)}$ yields the claim.
	\end{proof}
\noindent
\textbf{Claim 2:}
	If the main Theorem \ref{mainth} upper bound holds, and additionally $I$ has compact sub-level sets,
	then the main theorem upper bound extends to all functions $F: \mathcal{P}(E) \rightarrow [-\infty,\infty)$ which are upper semi-continuous and bounded from above.
	\begin{proof}
		Let $F: \mathcal{P}(E) \rightarrow [-\infty,\infty)$ be upper semi-continuous and bounded from above. Define $F_m := -m \vee F$ ($m\in \mathbb{N}$). By assumption, for all $m\in \mathbb{N}$,
		\[
		\liminf_{n\rightarrow \infty} \frac{1}{n} \rho_n(nF \circ L_n) \leq \liminf_{n\rightarrow \infty} \frac{1}{n} \rho_n(nF_m \circ L_n) \leq \sup_{\nu \in \mathcal{P}(E)} \left( F_m(\nu) - I(\nu)\right),
		\]
		so it only remains to show that 
		\[
		\limsup_{m\rightarrow \infty} S_m := \limsup_{m\rightarrow \infty} \sup_{\nu \in \mathcal{P}(E)} (F_m(\nu)-I(\nu)) \leq \sup_{\nu \in \mathcal{P}(E)} (F(\nu)-I(\nu)) =: S.
		\]
		$S_m$ are decreasing (for increasing $m$). If $S_m \rightarrow -\infty$, there is nothing to show. So assume $S_m$ are bounded below by $C \in \mathbb{R}$. Choose $\nu_m \in \mathcal{P}(E)$ such that
		\[
		F_m(\nu_m)  - I(\nu_m) \geq S_m - \frac{1}{m} \geq C-1.
		\]
		So $I(\nu_m)$ are uniformly bounded. By compact sub-level sets of $I$, for any subsequence we can choose a further subsequence still denoted by $(\nu_m)_{m\in\mathbb{N}}$ such that $\nu_m \stackrel{w}{\rightarrow} \nu_\infty$ for some $\nu_\infty \in \mathcal{P}(E)$. Then by upper semi-continuity of $F$ and $- I$,
		\[
		\limsup_{m\rightarrow \infty} F_m(\nu_m) - I(\nu_m) \leq F(\nu_\infty) - I(\nu_\infty) \leq S.
		\]
	\end{proof}

\section{Applications to Robust Markov chains}
\label{sec::Appl}
\subsection{Robust Large Deviations}
In this section $(E,d)$ is assumed to be compact. The main goal of this section is to show Theorem \ref{MainApplicationSum} and illustrate it in Example \ref{simpleexample}. To this end, we show the respective upper bound in Theorem \ref{robustupperbound} and the respective lower bound in Lemma \ref{ldplowerbound}. The intermediate results in this section are concerned with representation formulas for the functionals $\beta_n$ (see Lemma \ref{robustentropyselection} and \ref{robustentropyselection2}) and the verification of conditions (B.1) and (B.2) (see Lemma \ref{Mnconv}, \ref{M2closed} and \ref{lscLDP}).

In the following part leading up the Theorem \ref{robustupperbound}, we assume that $\pi$ satisfies the Feller property. We work with
\[
\beta(\nu,\mu) := \inf_{\hat{\mu} : d_W(\mu,\hat{\mu}) \leq r} 
R(\nu,\hat{\mu}) = \inf_{\hat{\mu} \in M_1(\mu)} R(\nu,\hat{\mu})
\]
for some $r \geq 0$ fixed.  Recall
\begin{align*}M_n(\theta) :=& \{\nu \in \mathcal{P}(E^n) : d_W(\nu_{0,1},\theta)\leq r \text{ and } d_W(\nu_{i,i+1}(x_1,...,x_i),\pi(x_i)) \leq r
~\nu\text{-a.s. for } 
i=1,...,n-1\}.
\end{align*}
To be precise, the above definition requires the condition
$d_W(\nu_{i,i+1}(x_1,...,x_i),\pi(x_i)) \leq r$ to hold for $\nu$-almost all 
$(x_1,...,x_n)\in E^n$ for every decomposition of $\nu$, where the respective 
$\nu$-null set may depend on the given decomposition.
Equivalently, the definition could state that there has to exist one decomposition of $\nu$ such that this condition holds point-wise.
That this notion is equivalent follows by the fact that decompositions of $\nu$ are only unique up to $\nu$-almost-sure equality.
\begin{lemma}{(See also \cite[Lemma 4.4]{bartl2016exponential} and \cite[Prop.~5.2]{lacker2016non})}
	\label{robustentropyselection}
	For all $n\in \mathbb{N}$, it holds
	\[
	\beta_n^{\theta}(\nu) = \inf_{\hat{\mu} \in M_n(\theta)} R(\nu,\hat{\mu}).
	\]
	\begin{proof}
		Fix $\theta \in \mathcal{P}(E)$.
		Define the sets \[Q_0 := \{\hat{\mu} \in \mathcal{P}(E) : d_W(\hat{\mu},\theta) \leq r\}\]
		and for $i = 1,...,n-1$ and $x_1,...,x_i \in E$
		\[
		Q_i(x_1,...,x_i) := \{\hat{\mu} \in \mathcal{P}(E) : d_W(\hat{\mu},\pi(x_i)) \leq r\}.
		\]
		We note that $M_n(\theta) = Q_0 \otimes Q_1 \otimes ... \otimes Q_{n-1}$, where $Q_0 \otimes Q_1 \otimes ... \otimes Q_{n-1}$ is defined as the set of measures $\mu = K_0 \otimes K_1 \otimes K_2 \otimes ... \otimes K_{n-1} \in \mathcal{P}(E^n)$, where $\mu_0 \in K_0$ and $K_i : E^{i} \rightarrow \mathcal{P}(E)$ are Borel measurable kernels such that $K_i(x_1,...,x_i) \in Q_i(x_1,...,x_i)$ for $\mu$-almost all $x_1,...,x_i$. 
		Since for all $i = 1,...,n$, the set $\{(x_1,...,x_i,\hat{\mu}) \in E^i \times \mathcal{P}(E) : \hat{\mu} \in Q_i(x_1,...,x_i)\}$ is trivially Borel, a measurable selection argument (e.g.~\cite[Prop.~7.50]{bertsekas2004stochastic}) yields for $\nu \in \mathcal{P}(E^n)$
		\begin{align*}
		\inf_{\hat{\mu} \in M_n(\theta)} R(\nu,\hat{\mu}) &= \inf_{K_0\otimes ... \otimes K_{n-1} \in Q_0 \otimes ... \otimes Q_{n-1}} \sum_{i=0}^{n-1} \int_{E^n} R(\nu_{i,i+1}(x_1,...,x_i),K_i(x_1,...,x_i)) \nu(dx_1,...,dx_n) \\
		&\stackrel{(*)}{=} \sum_{i=0}^{n-1} \int_{E^n} \inf_{\hat{\mu} \in Q_i(x_1,...,x_i)} R(\nu_{i,i+1}(x_1,...,x_i),\hat{\mu}) \nu(dx_1,...,dx_n)\\
		&= \beta(\nu_{0,1},\theta) + \sum_{i=1}^{n-1} \int_{E^n} \beta(\nu_{i,i+1}(x_1,...,x_i),\pi(x_i)) \nu(dx_1,...,dx_n)\\
		&= \beta_n^{\theta}(\nu),
		\end{align*}
		where rigorously step $(*)$ works inductively, see the proofs of \cite[Lemma 4.4]{bartl2016exponential} and \cite[Prop.~5.2]{lacker2016non}.
	\end{proof}
\end{lemma}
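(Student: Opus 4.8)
The plan is to rewrite both sides of the identity through the chain rule for relative entropy and then reduce it to an interchange of an infimum over kernels with an integral, the latter handled by a measurable selection argument. First I would record the product structure of the admissible set: putting $Q_0 := \{\hat\mu \in \mathcal{P}(E) : d_W(\hat\mu,\theta)\le r\}$ and, for $1\le i\le n-1$, $Q_i(x_1,\dots,x_i) := \{\hat\mu \in \mathcal{P}(E) : d_W(\hat\mu,\pi(x_i))\le r\}$ (and writing $\bar x=(x_1,\dots,x_i)$), one checks that $M_n(\theta)$ is exactly the set of measures $\hat\mu = K_0\otimes K_1\otimes\cdots\otimes K_{n-1}$ with $K_0\in Q_0$ and $K_i$ a Borel kernel satisfying $K_i(\bar x)\in Q_i(\bar x)$, using that disintegrations are unique only up to $\hat\mu$-a.s.\ equality. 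Since $d_W(\pi(x_i),\pi(x_i))=0\le r$, each $Q_i(\bar x)$ contains $\pi(x_i)$, hence is nonempty, so after redefining $K_i$ on a null set one may always take $K_i(\bar x)\in Q_i(\bar x)$ to hold for \emph{every} $\bar x$ — a point used below. Note also that by definition of $\beta$ one has $\inf_{\hat\mu\in Q_i(\bar x)}R(\nu_{i,i+1}(\bar x),\hat\mu)=\beta(\nu_{i,i+1}(\bar x),\pi(x_i))$ and $\inf_{\hat\mu\in Q_0}R(\nu_{0,1},\hat\mu)=\beta(\nu_{0,1},\theta)$.

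The bridge between the two sides is that for $\hat\mu=K_0\otimes\cdots\otimes K_{n-1}$ one has, in $[0,\infty]$,
\[
R(\nu,\hat\mu)=\sum_{i=0}^{n-1}\int_{E^n}R\big(\nu_{i,i+1}(x_1,\dots,x_i),K_i(x_1,\dots,x_i)\big)\,\nu(dx_1,\dots,dx_n);
\]
if $R(\nu,\hat\mu)<\infty$ then $\nu\ll\hat\mu$, so $\hat\mu$'s canonical disintegration agrees with the $K_i$ up to $\nu$-null sets and this is the usual chain rule, while if $R(\nu,\hat\mu)=\infty$ then $\nu\not\ll\hat\mu$ and, since $R$ itself has property (B.3), some summand on the right equals $+\infty$. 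From this the inequality $\beta_n^\theta(\nu)\le\inf_{\hat\mu\in M_n(\theta)}R(\nu,\hat\mu)$ is immediate: for admissible $\hat\mu$ with finite relative entropy each integrand above dominates $\beta(\nu_{i,i+1}(\bar x),\pi(x_i))$ pointwise, so the sum is at least $\beta_n^\theta(\nu)$, and one takes the infimum over $\hat\mu$.

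For the reverse inequality, assume $\beta_n^\theta(\nu)<\infty$ (otherwise there is nothing to prove) and fix $\varepsilon>0$; then each term $\int_{E^n}\beta(\nu_{i,i+1}(\bar x),\pi(x_i))\,\nu(dx_1,\dots,dx_n)$ is finite, so $\beta(\nu_{i,i+1}(\bar x),\pi(x_i))<\infty$ for $\nu$-a.e.\ $\bar x$. As the graph $\{(\bar x,\hat\mu):\hat\mu\in Q_i(\bar x)\}$ is Borel and $(\bar x,\hat\mu)\mapsto R(\nu_{i,i+1}(\bar x),\hat\mu)$ is Borel, the measurable selection theorem (\cite[Prop.~7.50]{bertsekas2004stochastic}) produces Borel kernels $K_i$ with $K_i(\bar x)\in Q_i(\bar x)$ for every $\bar x$ and $R(\nu_{i,i+1}(\bar x),K_i(\bar x))\le\beta(\nu_{i,i+1}(\bar x),\pi(x_i))+\varepsilon$ for $\nu$-a.e.\ $\bar x$ (on the exceptional null set take $K_i(\bar x):=\pi(x_i)$). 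Finiteness of $R$ there forces $\nu_{i,i+1}(\bar x)\ll K_i(\bar x)$ $\nu$-a.e., so $\hat\mu:=K_0\otimes\cdots\otimes K_{n-1}$ lies in $M_n(\theta)$ and satisfies $\nu\ll\hat\mu$, and the bridge formula gives $R(\nu,\hat\mu)\le\beta_n^\theta(\nu)+n\varepsilon$; letting $\varepsilon\to0$ finishes the proof.

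The step needing the most care — the one the author flags as working ``inductively'' — is making the interchange of infimum and integral rigorous, i.e.\ that $K_0,\dots,K_{n-1}$ may be optimized independently and term by term. The cleanest route is a downward induction on $i$: optimize $K_{n-1}$ first (a single selection in which the other kernels play no role, since the last summand depends only on the $(n-1)$-marginal of $\nu$ and on $K_{n-1}$), substitute the resulting value, and repeat for $K_{n-2},\dots,K_0$. The two things to watch are that the ``$\hat\mu$-a.s.'' constraint in the definition of $M_n(\theta)$ must not obstruct the construction — which is precisely why the selected $K_i$ are arranged to lie in $Q_i(\bar x)$ for all $\bar x$, not merely $\nu$-a.e.\ — and that finiteness of $\beta_n^\theta(\nu)$ is what supplies the absolute continuity needed to invoke the chain rule; for the detailed induction one can follow \cite[Lemma~4.4]{bartl2016exponential} and \cite[Prop.~5.2]{lacker2016non}.
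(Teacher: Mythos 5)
Your proposal is correct and takes essentially the same route as the paper: the product representation $M_n(\theta)=Q_0\otimes\cdots\otimes Q_{n-1}$, the chain rule for relative entropy, and an $\varepsilon$-optimal measurable selection via \cite[Prop.~7.50]{bertsekas2004stochastic} applied term by term (the paper's inductive step $(*)$), with your two-inequality write-up just making the pointwise-domination and selection directions explicit. One small blemish: the claim that $R(\nu,\hat{\mu})=\infty$ forces $\nu\not\ll\hat{\mu}$ is false (relative entropy can be infinite under absolute continuity), but this is harmless since the chain-rule identity holds in $[0,\infty]$ whenever the kernels agree with the disintegration $\nu$-a.s., and the omitted case $\nu\ll\hat{\mu}$ with $R(\nu,\hat{\mu})=\infty$ is covered by the usual chain rule, while your two main inequalities never use the flawed implication.
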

\begin{lemma}
	\label{Mnconv}
	Let $\theta_1,\theta_2 \in \mathcal{P}(E),~\nu_1 \in M_2(\theta_1),~\nu_2 \in M_2(\theta_2)$ and $\lambda \in (0,1)$. Then
	\[\lambda \nu_1 + (1-\lambda) \nu_2 \in M_2(\lambda \theta_1 + (1-\lambda) \theta_2).
	\]
	\begin{proof}
		Write $\nu_1 = \mu_1 \otimes K_1$ and $\nu_2 = \mu_2 \otimes K_2$ for some $\mu_1,\mu_2 \in \mathcal{P}(E)$ and $K_1,K_2$ stochastic kernels on $E$. Further, $K_1$ and $K_2$ are chosen such that $d_W(K_i(x),\pi(x)) \leq r$ for all $x \in E$ and $i\in \{1,2\}$. We have the equality
		\begin{align}
		\label{eqa1}
		\lambda \nu_1 + (1-\lambda)\nu_2 = (\lambda \mu_1 + (1-\lambda)\mu_2) \otimes K,
		\end{align}
		where $K: E \rightarrow \mathcal{P}(E)$ is defined by
		\begin{align*}
		K(x) &= \frac{d\mu_1}{d(\lambda\mu_1 + (1-\lambda)\mu_2)}(x) \lambda K_1(x) + \frac{d\mu_2}{d(\lambda\mu_1 + (1-\lambda)\mu_2)}(x) (1-\lambda) K_2(x)\\
		&=: \lambda_x K_1(x) + (1-\lambda_x) K_2(x).
		\end{align*}
		Equation \ref{eqa1} obviously holds for Borel sets of the form $A \times B \subseteq E^2$, which extends the equality to arbitrary Borel sets by Carath\'eodory. So $K$ is a point-wise convex combination of $K_1$ and $K_2$.
		Since for the first Wasserstein distance the Kantorovich duality (see e.g.~\cite[Chapter 5]{villani2008optimal}) implies
		\begin{align*}
		d_W(\lambda \mu_1 + (1-\lambda) \mu_2, \lambda \theta_1 + (1-\lambda) \theta_2) \leq \lambda d_W(\mu_1,\theta_1) + (1-\lambda) d_W(\mu_2,\theta_2) \leq r
		\end{align*}
		and for all $x\in E$
		\begin{align*} 
		d_W(\lambda_x K_1(x) + (1-\lambda_x) K_2(x), \lambda_x \pi(x) + (1-\lambda_x) \pi(x)) \\\leq \lambda_x d_W(K_1(x),\pi(x)) + (1-\lambda_x) d_W(K_2(x),\pi(x)) \leq r,
		\end{align*}
		the claim follows.
	\end{proof}
\end{lemma}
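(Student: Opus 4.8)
The plan is to verify directly the two defining conditions of $M_2(\lambda\theta_1 + (1-\lambda)\theta_2)$ for the mixture $\mu := \lambda\nu_1 + (1-\lambda)\nu_2$, with the only real tool being the joint convexity of the first Wasserstein distance: for $a,b,c,e \in \mathcal{P}(E)$ and $t \in [0,1]$,
$d_W(ta+(1-t)b,\, tc+(1-t)e) \leq t\, d_W(a,c) + (1-t)\, d_W(b,e)$.
This can be read off from Kantorovich duality (see e.g.~\cite[Chapter 5]{villani2008optimal}), since the dual objective is a supremum of affine functionals in the pair of measures and hence convex, or more concretely by pushing forward a convex combination of (near-)optimal couplings.

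First I would fix decompositions $\nu_1 = \mu_1 \otimes K_1$ and $\nu_2 = \mu_2 \otimes K_2$ with $\mu_i = (\nu_i)_{0,1}$, and — using the remark following the definition of $M_n$ — choose the kernels $K_1,K_2$ so that $d_W(K_i(x),\pi(x)) \leq r$ holds for every $x \in E$, not merely almost surely. The first marginal of $\mu$ is $\bar\mu := \lambda\mu_1 + (1-\lambda)\mu_2$, and convexity of $d_W$ together with $d_W(\mu_i,\theta_i) \leq r$ immediately gives $d_W(\bar\mu,\, \lambda\theta_1+(1-\lambda)\theta_2) \leq r$, the first required condition.

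For the second condition I would disintegrate $\mu$ along $\bar\mu$. Since $\lambda > 0$, both $\mu_1$ and $\mu_2$ are absolutely continuous with respect to $\bar\mu$, so $\lambda_x := \lambda\,\frac{d\mu_1}{d\bar\mu}(x)$ and $1-\lambda_x = (1-\lambda)\,\frac{d\mu_2}{d\bar\mu}(x)$ are well-defined Borel functions on $E$ summing to $1$ $\bar\mu$-a.e. I would then set $K(x) := \lambda_x K_1(x) + (1-\lambda_x) K_2(x)$, which is a Borel kernel, and check that $\bar\mu \otimes K = \mu$: on a measurable rectangle $A \times B$ one has $\lambda_x \bar\mu(dx) = \lambda\,\mu_1(dx)$ and $(1-\lambda_x)\bar\mu(dx) = (1-\lambda)\,\mu_2(dx)$, so $\int_A K(x)(B)\,\bar\mu(dx) = \lambda\nu_1(A\times B) + (1-\lambda)\nu_2(A\times B)$, and the identity extends to all Borel subsets of $E^2$ by a Dynkin/Carath\'eodory argument. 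Hence $(\bar\mu, K)$ is a valid decomposition of $\mu$. Finally, writing $\pi(x) = \lambda_x\pi(x) + (1-\lambda_x)\pi(x)$ and applying convexity of $d_W$ once more, $d_W(K(x),\pi(x)) \leq \lambda_x d_W(K_1(x),\pi(x)) + (1-\lambda_x) d_W(K_2(x),\pi(x)) \leq r$ for $\bar\mu$-a.e.\ $x$, which is the second condition, so $\mu \in M_2(\lambda\theta_1+(1-\lambda)\theta_2)$.

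I expect the main obstacle to be the bookkeeping around this disintegration rather than anything deep: correctly defining the mixing weights via Radon--Nikodym derivatives (keeping in mind that $\mu_1$ and $\mu_2$ need not be mutually absolutely continuous, only absolutely continuous relative to $\bar\mu$) and rigorously justifying that $\bar\mu\otimes K$ really reproduces $\lambda\nu_1+(1-\lambda)\nu_2$ rather than merely agreeing on rectangles. Once the kernel $K$ is in place, both conditions follow from the same elementary convexity estimate for $d_W$.
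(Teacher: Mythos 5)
Your proposal is correct and follows essentially the same route as the paper: the same decomposition $\nu_i = \mu_i \otimes K_i$ with kernels satisfying the Wasserstein bound pointwise, the same mixture kernel $K(x) = \lambda_x K_1(x) + (1-\lambda_x)K_2(x)$ with Radon--Nikodym mixing weights, verification of $\bar\mu\otimes K = \lambda\nu_1+(1-\lambda)\nu_2$ on rectangles extended by Carath\'eodory/Dynkin, and two applications of the convexity of $d_W$ obtained from Kantorovich duality. No gaps; your explicit remark that $\mu_1,\mu_2 \ll \bar\mu$ (so the weights are well defined) is a point the paper leaves implicit.
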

That $\beta_2^{\cdot}(\cdot)$ is convex follows by the previous lemma and convexity of $R(\cdot,\cdot)$, since
\begin{align*}
	\beta_2^{\lambda \theta_1 + (1-\lambda) \theta_2}(\lambda \nu_1 + (1-\lambda) \ \nu_2) &= \inf_{\hat{\mu} \in M_2(\lambda \theta_1 + (1-\lambda) \theta_2)} R(\lambda \nu_1 + (1-\lambda) \nu_2,\hat{\mu})\\
	&\stackrel{\ref{Mnconv}}{\leq} \inf_{\hat{\mu}_1 \in M_2(\theta_1), \hat{\mu}_2 \in M_2(\theta_2)} R(\lambda \nu_1 + (1-\lambda) \nu_2, \lambda \hat{\mu}_1 + (1-\lambda) \hat{\mu}_2)\\
	&\leq \inf_{\hat{\mu}_1 \in M_2(\theta_1), \hat{\mu}_2 \in M_2(\theta_2)} \lambda R(\nu_1,\hat{\mu}_1) + (1-\lambda) R(\nu_2,\hat{\mu}_2).\\
	&= \lambda \beta_2^{\theta_1}(\nu_1) + (1-\lambda) \beta_2^{\theta_2}(\nu_2)
\end{align*}
It remains to show that $\beta_2^{\cdot}(\cdot)$ is lower semi-continuous. To this end, we first show the following
\begin{lemma}
	\label{M2closed}
	 If $\pi$ satisfies the Feller property, then $M_2(\theta)$ is closed.
	\begin{proof}\footnote{Thanks to Daniel Lacker for providing this proof.}
		Recall $\mu \otimes K \in M_2(\theta)$ if and only if both
		\begin{align}
			d_W(\mu,\theta) &\leq r \label{firstcon},\\
			d_W(K(x),\pi(x)) &\leq r \text{ for } \mu \text{-a.a. } x\in E. \label{secondcon}
		\end{align}
		Condition \eqref{firstcon} is closed (obvious once it is rewritten by Kantorovich duality), so we focus on condition \eqref{secondcon}. Since by assumption $(E,d)$ is compact and thus totally bounded, the set of Lipschitz-1 functions mapping $E$ into $\mathbb{R}$ which are absolutely bounded by 1 (denoted by $\Lip$)  is separable with respect to the sup-norm (follows since the space of uniformly bounded and continuous functions is separable and every subset of a separable metric space is again separable). We denote by $\{f_1,f_2,...\} \subseteq \Lip$ a countable dense subset. Further we are going to use the fact that for bounded and measurable functions $h:E \rightarrow \mathbb{R}$ and $\nu \in \mathcal{P}(E)$ it holds 
		\begin{align*}
			\left(h \geq 0~ \nu-\text{a.s.} \right) \Leftrightarrow \left(\forall g\in C_b(E),g\geq 0: \int_E g(x) h(x) \nu(dx) \geq 0\right),
		\end{align*}
		which is true because $E$ is a Polish space and thus the function $\eins_A$ for the Borel set $A := \{h < 0\}$ can be approximated in $L_1(\nu)$ by a sequence of non-negative, continuous and bounded functions.
		
		We can rewrite condition \eqref{secondcon} as follows
		\begin{align*}
			&d_W(K(x),\pi(x)) \leq r \text{ for } \mu \text{-a.a.~} x\in E \\
			\Leftrightarrow &\left( \forall f \in \Lip: \int_Ef dK(x) - \int_E f d \pi(x) \leq r\right) \text{ for } \mu \text{-a.a.~} x\in E \\
			\Leftrightarrow &\left( \forall i \in \mathbb{N}: \int_Ef_i dK(x) - \int_E f_i d \pi(x) \leq r\right) \text{ for } \mu \text{-a.a.~} x\in E \\
			\Leftrightarrow &\left( \forall i \in \mathbb{N}, \forall g \in C_b(E), g\geq 0 : \int_E g(x) \left( \int_E f_i(y) K(x,dy) - \int_E f_i(y) \pi(x,dy) - r \right) \nu(dx) \leq 0 \right)\\
			\Leftrightarrow &\left( \forall i \in \mathbb{N}, \forall g \in C_b(E), g\geq 0 : \int_{E^2} g(x) f_i(y) \nu \otimes K (dx,dy) - \int_E g(x) \left(\int_E f_i d\pi(x) - r \right) \nu(dx)  \leq 0 \right)
		\end{align*}
		and the last line expresses a closed condition if $\pi$ satisfies the Feller property, which guarantees that $x \mapsto \int_E f d\pi(x)$ is continuous for all $f \in C_b(E)$.
	\end{proof}
\end{lemma}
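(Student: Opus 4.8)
The plan is to exhibit $M_2(\theta)$ as a countable intersection of sets of the form $\{\nu\in\mathcal{P}(E^2):\int_{E^2}\phi\,d\nu\le c\}$ with $\phi\in C_b(E^2)$, each of which is weakly closed. First I would recall that every $\nu\in\mathcal{P}(E^2)$ disintegrates as $\nu=\mu\otimes K$ with $\mu=\nu^{(1)}$ its first marginal and $K$ a Borel kernel, unique up to $\mu$-a.s.\ equality; membership $\nu\in M_2(\theta)$ is equivalent to the two conditions (i) $d_W(\mu,\theta)\le r$ and (ii) $d_W(K(x),\pi(x))\le r$ for $\mu$-a.a.\ $x$, and neither condition depends on the chosen version of $K$. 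For (i), Kantorovich--Rubinstein duality (cf.\ \cite{villani2008optimal}) gives $d_W(\mu,\theta)=\sup\{\int_E f\,d\mu-\int_E f\,d\theta:f\in\Lip\}$, where on the compact space $E$ the competitors $f$ may be taken uniformly bounded (subtracting a constant alters neither $d_W$ nor any integral below); choosing a countable sup-norm dense family $\{f_i\}$ of such functions, which exists since $C(E)$ is separable, reduces (i) to the countably many constraints $\int_E f_i\,d\mu-\int_E f_i\,d\theta\le r$, and $\nu\mapsto\int_{E^2}f_i(x)\,\nu(dx,dy)$ is weakly continuous, so each constraint is closed.

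The substance lies in condition (ii). Here I would again use separability to replace the uncountable supremum in $d_W(K(x),\pi(x))$ by the same countable family $\{f_i\}$: by density and sup-norm continuity of $f\mapsto\int_E f\,dK(x)-\int_E f\,d\pi(x)$, condition (ii) is equivalent to demanding, for every $i$, that $\int_E f_i\,dK(x)-\int_E f_i\,d\pi(x)\le r$ for $\mu$-a.a.\ $x$. I would then invoke the elementary fact that for bounded measurable $h:E\to\mathbb{R}$ and $\mu\in\mathcal{P}(E)$ one has $h\ge0$ $\mu$-a.s.\ if and only if $\int_E g\,h\,d\mu\ge0$ for every nonnegative $g\in C_b(E)$ (true because $E$ is Polish, so $\eins_{\{h<0\}}$ is approximable in $L^1(\mu)$ by nonnegative continuous bounded functions). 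Applying this with $h(x)=r+\int_E f_i\,d\pi(x)-\int_E f_i\,dK(x)$ and noting that $\mu$ is the first marginal of $\nu$, condition (ii) becomes: for every $i$ and every nonnegative $g\in C_b(E)$,
\[
\int_{E^2} g(x)f_i(y)\,\nu(dx,dy)-\int_{E^2} g(x)\Big(\textstyle\int_E f_i\,d\pi(x)+r\Big)\,\nu(dx,dy)\le 0 .
\]
Restricting $g$ to a countable sup-norm dense family of nonnegative functions in $C_b(E)$ leaves only countably many such inequalities, again with nothing lost.

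It remains to observe that each of these inequalities defines a weakly closed subset of $\mathcal{P}(E^2)$. The integrand $(x,y)\mapsto g(x)\big(f_i(y)-\int_E f_i\,d\pi(x)-r\big)$ lies in $C_b(E^2)$ --- and this is exactly the step where the Feller property of $\pi$ is used, since it makes $x\mapsto\int_E f_i\,d\pi(x)$ continuous and bounded --- so the associated functional of $\nu$ is weakly continuous and its nonpositivity is a closed condition. Intersecting the countably many closed sets arising from (i) and from (ii) shows $M_2(\theta)$ is closed. I expect the main obstacle to be conceptual rather than computational: passing from the $\mu$-a.s.\ statement (ii), which is a priori not stable under weak limits, to a family of genuinely \emph{continuous} linear constraints on $\nu$, where the Feller assumption is precisely what makes the final continuity step valid; without it $x\mapsto\int_E f_i\,d\pi(x)$ would only be measurable and the limiting set need not be closed. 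A minor point to verify in passing is that all conditions are insensitive to the particular disintegration of $\nu$, which is automatic since the kernel is $\mu$-a.s.\ unique.
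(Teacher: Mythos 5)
Your proposal is correct and follows essentially the same route as the paper's proof: Kantorovich duality plus a countable sup-norm dense family of Lipschitz test functions, the characterization of $\mu$-a.s.\ nonnegativity by testing against nonnegative $g\in C_b(E)$, and the Feller property to make each resulting linear constraint on $\nu$ weakly continuous. The only differences are cosmetic (e.g.\ your additional countable reduction in $g$, which is not needed since arbitrary intersections of closed sets are closed), and your sign $+r$ in the final displayed constraint is in fact the correct one.
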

\begin{lemma}
	\label{lscLDP}
	$\beta_2^{\cdot}(\cdot)$ is lower semi-continuous.
	\begin{proof}
		Let $(\theta_n,\nu_n) \stackrel{w}{\rightarrow} (\theta,\nu) \in \mathcal{P}(E) \times \mathcal{P}(E^2)$ as $n\rightarrow \infty$. We have to show that \[\liminf_{n\rightarrow \infty} \beta_2^{\theta_n}(\nu_n) \geq \beta_2^{\theta}(\nu),\]
		which is done by choosing an arbitrary subsequence and showing there exists a further subsequence such that the inequality holds. So we start with a subsequence still denoted by $(\theta_n,\nu_n)_{n\in\mathbb{N}}$. Let $\hat{\mu}_n \in M_2(\theta_n)$ such that \[\beta_2^{\theta_n}(\nu_n) \geq R(\nu_n,\hat{\mu}_n) - \frac{1}{n}\] and choose a further subsequence still denoted by $(\theta_n,\nu_n)_{n\in\mathbb{N}}$ such that $d_W(\theta_n,\theta) \leq \frac{1}{n}$ and $\hat{\mu}_n$ converges weakly to some $\hat{\mu} \in \mathcal{P}(E^2)$. We show that $\hat{\mu} \in M_2(\theta)$. To this end, define \[M_2^{r,n}(\theta) := \left\{ \mu \otimes K \in \mathcal{P}(E^2) : d_W(\mu,\theta) \leq r+\frac{1}{n},~ d_W(K(x),\pi(x))\leq r \text{ for } \mu\text{-a.a.~} x\in E\right\},\]
		which is closed, as the proof of the previous lemma trivially carries over to this set.
		We see that $\hat{\mu}_m \in M_2^{r,n}(\theta)$ for all $m\geq n$, and therefore $\hat{\mu} \in M_2^{r,n}(\theta)$ for all $n\in \mathbb{N}$, which yields $\hat{\mu} \in M_2(\theta)$. Finally, we get by lower semi-continuity of $R(\cdot,\cdot)$
		\[
		\liminf_{n\rightarrow \infty} \beta_2^{\theta_n}(\nu_n) \geq \liminf_{n\rightarrow \infty} R(\nu_n,\hat{\mu}_n) \geq R(\nu,\hat{\mu}) \geq \inf_{\mu \in M_2(\theta)} R(\nu,\mu) = \beta_2^{\theta}(\nu).
		\]
	\end{proof}
\end{lemma}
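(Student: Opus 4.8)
The plan is to combine the representation $\beta_2^{\theta}(\nu)=\inf_{\hat\mu\in M_2(\theta)}R(\nu,\hat\mu)$ from Lemma~\ref{robustentropyselection} (case $n=2$), the weak compactness of $\mathcal{P}(E^2)$, the joint lower semicontinuity of relative entropy, and a joint-closedness upgrade of Lemma~\ref{M2closed}.

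First I would carry out the usual subsequence reduction: given $(\theta_n,\nu_n)\stackrel{w}{\rightarrow}(\theta,\nu)$, it suffices to show that an arbitrary subsequence has a further subsequence along which $\liminf_n\beta_2^{\theta_n}(\nu_n)\ge\beta_2^{\theta}(\nu)$; if this $\liminf$ is $+\infty$ there is nothing to prove, so after relabelling I may assume it is finite and that $\beta_2^{\theta_n}(\nu_n)$ converges. Using Lemma~\ref{robustentropyselection} I pick near-minimizers $\hat\mu_n\in M_2(\theta_n)$ with $R(\nu_n,\hat\mu_n)\le\beta_2^{\theta_n}(\nu_n)+\frac1n$. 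Since $(E,d)$ is compact, $\mathcal{P}(E^2)$ is weakly compact, so after passing to a further subsequence $\hat\mu_n\stackrel{w}{\rightarrow}\hat\mu$ for some $\hat\mu\in\mathcal{P}(E^2)$.

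The heart of the matter is to show $\hat\mu\in M_2(\theta)$, i.e.\ that the graph $\{(\theta,\mu):\mu\in M_2(\theta)\}$ is closed in $\mathcal{P}(E)\times\mathcal{P}(E^2)$. Writing a generic element of $M_2(\cdot)$ as $\mu\otimes K$, I would express this graph as an intersection of two closed sets. First, $\{(\theta,\mu\otimes K):d_W(\mu,\theta)\le r\}$ is closed: on the compact space $E$ the Kantorovich--Rubinstein formula exhibits $d_W$ as a supremum of maps $(\mu,\theta)\mapsto\int f\,d\mu-\int f\,d\theta$ with $f$ $1$-Lipschitz, each continuous for weak convergence, so $d_W$ is jointly lower semicontinuous; and $\mu\otimes K\mapsto\mu$ is continuous. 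Second, $\{\mu\otimes K:d_W(K(x),\pi(x))\le r\text{ for }\mu\text{-a.a. }x\}$ is closed by exactly the computation in the proof of Lemma~\ref{M2closed}, which uses only the Feller property of $\pi$ and does not involve $\theta$. Intersecting, and using $\hat\mu_n\in M_2(\theta_n)$ together with $(\theta_n,\hat\mu_n)\to(\theta,\hat\mu)$, gives $\hat\mu\in M_2(\theta)$. (Equivalently one may avoid formulating joint closedness explicitly: on a compact space $\theta_n\to\theta$ weakly implies $d_W(\theta_n,\theta)\to 0$, so after a further subsequence $d_W(\theta_n,\theta)\le\frac1n$; then by the triangle inequality the first marginal of $\hat\mu_m$ is within $d_W$-distance $r+\frac1n$ of $\theta$ for all $m\ge n$, hence $\hat\mu_m$ lies in the variant of $M_2(\theta)$ with first-marginal radius $r+\frac1n$, which is closed by the proof of Lemma~\ref{M2closed}; intersecting over $n$ yields $\hat\mu\in M_2(\theta)$.)

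Finally I would invoke the joint lower semicontinuity of $R(\cdot,\cdot)$ on $\mathcal{P}(E^2)\times\mathcal{P}(E^2)$: since $\nu_n\to\nu$ and $\hat\mu_n\to\hat\mu$, $\liminf_nR(\nu_n,\hat\mu_n)\ge R(\nu,\hat\mu)$, and therefore $\liminf_n\beta_2^{\theta_n}(\nu_n)\ge\liminf_n\bigl(R(\nu_n,\hat\mu_n)-\frac1n\bigr)\ge R(\nu,\hat\mu)\ge\inf_{\mu\in M_2(\theta)}R(\nu,\mu)=\beta_2^{\theta}(\nu)$, which is the desired inequality. The only genuinely delicate step is the implication $\hat\mu_n\in M_2(\theta_n)\Rightarrow\hat\mu\in M_2(\theta)$; the compactness, the subsequence extractions, and the properties of $R$ are all routine.
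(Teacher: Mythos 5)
Your proposal is correct and follows essentially the same route as the paper: near-optimal $\hat{\mu}_n \in M_2(\theta_n)$, weak compactness of $\mathcal{P}(E^2)$ to extract a limit $\hat{\mu}$, the closedness mechanism of Lemma \ref{M2closed} to conclude $\hat{\mu}\in M_2(\theta)$ (the paper uses exactly your parenthetical variant with the enlarged sets $M_2^{r,n}(\theta)$ and $d_W(\theta_n,\theta)\leq \tfrac{1}{n}$), and joint lower semicontinuity of $R(\cdot,\cdot)$. Your alternative framing via joint closedness of the graph, splitting off the first-marginal condition with Kantorovich duality, is only a cosmetic repackaging of the same step.
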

The rate function $I$ corresponding to the choice of $\beta$ as defined at the beginning of the section is given by
\[
I(\nu) := \inf_{q:\\ \nu q = \nu} \int_E \inf_{K_x \in M(\pi(x))} R(q(x),K_x) \nu(dx)
\]
for $\nu \in \mathcal{P}(E)$. Using the above observations to apply the main theorem, we get the following:
\begin{theorem}
	\label{robustupperbound}
	For all functions $F : \mathcal{P}(E) \rightarrow [-\infty,\infty)$ which are upper semi-continuous and bounded from above it holds
	\[
	\limsup_{n\rightarrow \infty} \sup_{\mu \in M_n(\pi_0)} \frac{1}{n} \ln \int_{E^n} 
	\exp(n F \circ L_n) d\mu \leq \sup_{\nu \in \mathcal{P}(E)} \left( F(\nu) - 
	I(\nu) \right).
	\]	
	Further, for all closed sets $A \subseteq \mathcal{P}(E)$ it holds
	\[
	\limsup_{n\rightarrow \infty} \sup_{\mu \in M_n(\pi_0)} \frac{1}{n} \ln \mu(L_n \in A) \leq -\inf_{\nu \in A} I(\nu).
	\]
	\begin{proof}
		For the first claim, apply Theorem \ref{mainth}, which
		by compactness of $E$ and thus by Corollary \ref{cor1} extends to all functions $F : \mathcal{P}(E) \rightarrow [-\infty,\infty)$ which are upper semi-continuous and bounded from above. Specifically, for a closed set $A \subseteq \mathcal{P}(E)$ and $F = -\infty \eins_{A^C}$ the second claim follows.
	\end{proof}
\end{theorem}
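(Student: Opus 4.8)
The plan is to deduce both assertions from the upper bound in Theorem~\ref{mainth} (extended via Corollary~\ref{cor1}(b)) after checking two things: that the quantity on the left equals $\frac1n\rho_n(nF\circ L_n)$ for the present choice of $\beta$, and that all the hypotheses of Theorem~\ref{mainth} and Corollary~\ref{cor1}(b) are in force.

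First I would record that the hypotheses hold. Here $\beta(\nu,\mu)=\inf_{\hat\mu\in M_1(\mu)}R(\nu,\hat\mu)\geq 0$ and $\beta(\nu,\nu)=0$ because $\nu\in M_1(\nu)$. Condition (B.1) is the convexity computation displayed right after Lemma~\ref{Mnconv} (Lemma~\ref{Mnconv} together with convexity of $R$); condition (B.2) is Lemma~\ref{lscLDP}; and since $(E,d)$ is compact, (T.1') gives condition (T). Thus the upper bound of Theorem~\ref{mainth} applies to bounded upper semicontinuous $F$. Moreover Lemma~\ref{lscLDP} and Corollary~\ref{cor1}(a) make $I$ lower semicontinuous, and $\mathcal P(E)$ is compact, so $I$ has compact sub-level sets; Corollary~\ref{cor1}(b) then upgrades the bound to all upper semicontinuous $F:\mathcal P(E)\to[-\infty,\infty)$ that are bounded above.

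Next I would carry out the identification. By Lemma~\ref{robustentropyselection}, $\beta_n^{\pi_0}(\mu)=\inf_{\hat\mu\in M_n(\pi_0)}R(\mu,\hat\mu)$, so unfolding the definition of $\rho_n=\rho_n^{\pi_0}$ and interchanging the two suprema,
\[
\rho_n(nF\circ L_n)=\sup_{\hat\mu\in M_n(\pi_0)}\ \sup_{\mu\in\mathcal P(E^n)}\Bigl(\int_{E^n} nF\circ L_n\,d\mu-R(\mu,\hat\mu)\Bigr)=\sup_{\hat\mu\in M_n(\pi_0)}\ln\int_{E^n}\exp(nF\circ L_n)\,d\hat\mu,
\]
the last step being the Donsker--Varadhan variational formula applied to the measurable, bounded-above integrand $nF\circ L_n$ (for $\mu\not\ll\hat\mu$ the bracket is $-\infty$ under the convention $\infty-\infty=-\infty$, so the inner supremum is unchanged by restricting to $\mu\ll\hat\mu$; note that $L_n$ is continuous, so $F\circ L_n$ is genuinely upper semicontinuous and bounded above, hence a legitimate integrand). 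Dividing by $n$, the left-hand side of the theorem equals $\limsup_n\frac1n\rho_n(nF\circ L_n)$, which by the previous paragraph is $\leq\sup_{\nu\in\mathcal P(E)}(F(\nu)-I(\nu))$ with $I$ the rate function \eqref{eq::RateFunction} for this $\beta$ — precisely the $I$ displayed before the theorem. This is the first claim.

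Finally, the closed-set claim follows by specializing to $F:=-\infty\cdot\eins_{A^C}$, i.e.\ $F\equiv 0$ on $A$ and $F\equiv-\infty$ off $A$, which is upper semicontinuous (as $A$ is closed) and bounded above. Then $\exp(nF\circ L_n)=\eins_{\{L_n\in A\}}$, so $\int_{E^n}\exp(nF\circ L_n)\,d\mu=\mu(L_n\in A)$, while $\sup_{\nu}(F(\nu)-I(\nu))=\sup_{\nu\in A}(-I(\nu))=-\inf_{\nu\in A}I(\nu)$, and the first claim yields exactly the stated inequality. The genuinely delicate ingredient — closedness of $M_2(\theta)$, underlying (B.2) — was already dealt with in Lemmas~\ref{M2closed} and \ref{lscLDP}, so within this proof itself the only step needing any care is checking that the Donsker--Varadhan formula is invoked in the right generality (integrand merely measurable, bounded above, possibly taking the value $-\infty$).
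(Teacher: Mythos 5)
Your proposal is correct and follows essentially the same route as the paper: apply the upper bound of Theorem \ref{mainth} with the conditions (B.1), (B.2), (T.1') verified by Lemma \ref{Mnconv}, Lemma \ref{lscLDP} and compactness, extend to $[-\infty,\infty)$-valued upper semi-continuous $F$ via Corollary \ref{cor1}, and specialize to $F=-\infty\cdot\eins_{A^C}$. The only difference is that you make explicit what the paper leaves implicit, namely the identification $\rho_n(nF\circ L_n)=\sup_{\hat\mu\in M_n(\pi_0)}\ln\int_{E^n}\exp(nF\circ L_n)\,d\hat\mu$ via Lemma \ref{robustentropyselection} and the Donsker--Varadhan formula, and your handling of that step (including the bounded-above, possibly $-\infty$-valued integrand) is sound.
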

For the large deviations bound in Theorem \ref{robustupperbound} to be non-vacuous for a closed set $A \subseteq \mathcal{P}(E)$ requires
\begin{align}
	\label{sense}
	\inf_{\nu \in A} I(\nu) > 0.
\end{align}

Intuitively, $\eqref{sense}$ holds if and only if for all pairs $\nu \in A$ and $q$ with $\nu q = \nu$, there is some Borel set $S \subseteq E$ with $\nu(S) > 0$ such that $d_W(q(x),\pi(x)) > r$ for all $x\in S$.

To properly address the question whether the attained bound is sharp, one needs a lower bound in accordance with the upper bound.
The choice of $\beta$ that leads to Theorem \ref{robustupperbound} cannot yield a lower bound with our approach, since condition (B.3) is not satisfied for $r > 0$ and hence the lower bound of Theorem \ref{mainth} cannot be applied.

In the following we therefore consider the functional $\underline{\beta}$ which is chosen such that it resembles $\beta$ and satisfies (B.3), albeit at the cost of not satisfying (B.2). This will lead to the lower bound of Theorem \ref{MainApplicationSum} proven in Lemma \ref{ldplowerbound}. Define
\begin{align*}
\underline{\beta}(\nu,\mu) &:= \inf_{\substack{\hat{\mu} : d_W(\mu,\hat{\mu}) \leq r,\\ \hat{\mu} \ll \mu}} 
R(\nu,\hat{\mu}),\\
\underline{M}_n(\theta) &:= \{\nu \in M_n(\theta) : \nu \ll \theta \otimes \pi \otimes ... \otimes \pi\},\\
\underline{I}(\nu) &:= \inf_{q: \nu q = \nu} \int_E\inf_{K_x \in \underline{M}(\pi(x))} R(q(x),K(x)) \nu(dx)
\end{align*}
Further, we assume for the analysis of the lower bound that $\pi$ satisfies (M), but longer has to satisfy the Feller property.
We find
\begin{lemma}{(See also \cite[Lemma 4.4]{bartl2016exponential} and \cite[Prop.~5.2]{lacker2016non})}
	\label{robustentropyselection2}
	For all $n\in\mathbb{N}$, it holds
	\[
	\underline{\beta}_n^{\theta}(\nu) = \inf_{\mu \in \underline{M}_n(\theta)} R(\nu,\mu).
	\]
	\begin{proof}
		The proof is the same as that of Lemma \ref{robustentropyselection}, except here we need measurability of the sets
		\[
		S_i := \{ (x_1,...,x_i,\hat{\mu}) \in E^i \times \mathcal{P}(E) : d_W(\hat{\mu},\pi(x_i)) \leq r \text{ and } \hat{\mu} \ll \pi(x_i)\}
		\]
		for $i\in\{1,...,n-1\}$. That these sets are indeed Borel measurable can be seen as follows: Define the function $g : \mathcal{P}(E) \times \mathcal{P}(E) \times E \rightarrow \mathbb{R}_+$ by
		\[
		g(\mu,\nu,x) = \frac{d\mu_{|\nu}}{d\nu}(x).
		\]
		Here $\mu_{|\nu}$ denotes the absolutely continuous part of $\mu$ with respect to $\nu$ as given by Lebesgue's decomposition theorem. Then $g$ is Borel as shown in \cite[V.58 and subsequent remark]{dellacherie1982probability}. We have $\mu \ll \nu \Leftrightarrow \int_E g(\mu,\nu,\cdot) d\nu = 1$, which shows that $S_i$ is Borel (as the other conditions that define $S_i$ are trivially Borel).
		
		To arrive at the given form of $\underline{M}_n(\theta)$ one uses the following equivalence for measures $\nu_1,\nu_2\in\mathcal{P}(E)$ and stochastic kernels $K_1,K_2: E \rightarrow \mathcal{P}(E)$ (see e.g.~\cite[Lemma  A.2]{bartl2016exponential})
		\[\left(\nu_1 \otimes K_1 \ll \nu_2 \otimes K_2 \in \mathcal{P}(E^2)\right) \Leftrightarrow \left(\nu_1 \ll \nu_2\text{ and }K_1(x) \ll K_2(x)\text{ for }\nu_1\text{-almost all }x\in E\right).\]
	\end{proof}
\end{lemma}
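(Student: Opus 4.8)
The plan is to imitate the proof of Lemma \ref{robustentropyselection} line by line; the only genuinely new issue is a measurability question caused by the extra absolute-continuity constraint built into $\underline{M}_n(\theta)$. First I would rephrase $\underline{M}_n(\theta)$ as a fibred product of set-valued maps: put $\underline{Q}_0 := \{\hat{\mu}\in\mathcal{P}(E): d_W(\hat{\mu},\theta)\le r,\ \hat{\mu}\ll\theta\}$ and, for $i=1,\ldots,n-1$ and $x_1,\ldots,x_i\in E$, $\underline{Q}_i(x_1,\ldots,x_i):=\{\hat{\mu}\in\mathcal{P}(E):d_W(\hat{\mu},\pi(x_i))\le r,\ \hat{\mu}\ll\pi(x_i)\}$. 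Using the equivalence $\nu_1\otimes K_1\ll\nu_2\otimes K_2$ iff $\nu_1\ll\nu_2$ and $K_1(x)\ll K_2(x)$ for $\nu_1$-a.a.\ $x$ (e.g.\ \cite[Lemma A.2]{bartl2016exponential}), iterated over the $n$ coordinates, one checks that $\nu\ll\theta\otimes\pi\otimes\cdots\otimes\pi$ is equivalent to $\nu_{0,1}\ll\theta$ together with $\nu_{i,i+1}(x_1,\ldots,x_i)\ll\pi(x_i)$ for $\nu$-a.a.\ $(x_1,\ldots,x_i)$; hence $\underline{M}_n(\theta)$ is exactly the set of $\nu$ admitting a decomposition $\nu=K_0\otimes K_1\otimes\cdots\otimes K_{n-1}$ with $K_0\in\underline{Q}_0$ and $K_i(x_1,\ldots,x_i)\in\underline{Q}_i(x_1,\ldots,x_i)$ $\nu$-a.s. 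This is the analogue of the identity $M_n(\theta)=Q_0\otimes\cdots\otimes Q_{n-1}$ used in Lemma \ref{robustentropyselection}.

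Granting that the graphs $S_i:=\{(x_1,\ldots,x_i,\hat{\mu})\in E^i\times\mathcal{P}(E): \hat{\mu}\in\underline{Q}_i(x_1,\ldots,x_i)\}$, and likewise the graph of $\underline{Q}_0$, are Borel, the remainder is verbatim the argument of Lemma \ref{robustentropyselection}: for a fixed $\nu\in\mathcal{P}(E^n)$ the measurable selection theorem \cite[Prop.~7.50]{bertsekas2004stochastic} allows one to interchange the infimum over the selection kernels $K_i$ with the integral $\int_{E^n}(\cdot)\,\nu(dx)$, so that, after expanding $R(\nu,\mu)$ by its chain rule, $\inf_{\mu\in\underline{M}_n(\theta)}R(\nu,\mu)$ becomes $\underline{\beta}(\nu_{0,1},\theta)+\sum_{i=1}^{n-1}\int_{E^n}\underline{\beta}(\nu_{i,i+1}(x_1,\ldots,x_i),\pi(x_i))\,\nu(dx)=\underline{\beta}_n^{\theta}(\nu)$. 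As already noted for Lemma \ref{robustentropyselection}, the passage from the joint infimum over all $K_i$ to the pointwise infima is done by induction on $n$, exactly as in \cite[Lemma 4.4]{bartl2016exponential} and \cite[Prop.~5.2]{lacker2016non}; since that induction never touches the new constraint beyond the selection step, I would simply invoke it.

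The only step that requires real work — and the main obstacle — is the Borel measurability of $S_i$, i.e.\ of the relation $\{(\mu,\nu)\in\mathcal{P}(E)^2:\mu\ll\nu\}$. Here I would use Lebesgue's decomposition: write $\mu=\mu_{|\nu}+\mu^{\perp}$ with $\mu_{|\nu}\ll\nu$ and $\mu^{\perp}\perp\nu$, and set $g(\mu,\nu,x):=\frac{d\mu_{|\nu}}{d\nu}(x)$. The key input is that $g:\mathcal{P}(E)\times\mathcal{P}(E)\times E\to[0,\infty)$ is jointly Borel — a classical but non-trivial fact, which I would cite from \cite[V.58 and the subsequent remark]{dellacherie1982probability}. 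Once this is available, the identity $\mu\ll\nu\iff\int_E g(\mu,\nu,\cdot)\,d\nu=1$ exhibits $\{(\mu,\nu):\mu\ll\nu\}$ as a Borel subset of $\mathcal{P}(E)^2$; composing the $\mathcal{P}(E)$-slot with the Borel map $x\mapsto\pi(x)$ (continuous when $\pi$ is Feller, but Borel suffices here) and intersecting with the trivially Borel Wasserstein-ball condition $d_W(\hat{\mu},\pi(x_i))\le r$ shows $S_i$ is Borel, which is precisely what was missing to run the selection argument above.
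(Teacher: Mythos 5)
Your proposal is correct and follows essentially the same route as the paper: reduce to the selection argument of Lemma \ref{robustentropyselection}, handle the product form of $\underline{M}_n(\theta)$ via the kernel absolute-continuity equivalence from \cite[Lemma A.2]{bartl2016exponential}, and establish Borel measurability of the constraint sets $S_i$ through the jointly measurable Lebesgue-decomposition density $g(\mu,\nu,x)=\frac{d\mu_{|\nu}}{d\nu}(x)$ from \cite[V.58]{dellacherie1982probability} together with the criterion $\mu\ll\nu\iff\int_E g(\mu,\nu,\cdot)\,d\nu=1$. This matches the paper's proof in both structure and the key citations.
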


In complete analogy to the choice of $\beta$ leading to Theorem 
\ref{robustupperbound}, we see that $\underline{\beta}$ satisfies (B.1), which 
is a consequence of the above Lemma \ref{robustentropyselection2} in 
combination with Lemma \ref{Mnconv}, where one additionally uses \[(\mu_1 \ll 
\theta_1 \text{ and } \mu_2 \ll \theta_2) \Rightarrow \lambda \mu_1 + 
(1-\lambda) \mu_2 \ll \lambda \theta_1 + (1-\lambda) \theta_2 \text{ for } 
\mu_1,\mu_2,\theta_1,\theta_2 \in \mathcal{P}(E),~\lambda \in (0,1).\]

As (B.3) and (M) are satisfied as well, Theorem \ref{mainth} yields for all $F\in C_b(\mathcal{P}(E))$
\[
\liminf_{n\rightarrow \infty} \sup_{\mu \in \underline{M}_n(\pi_0)} \frac{1}{n} \int_{E^n} \exp(F \circ L_n) d\mu \geq \sup_{\nu \in \mathcal{P}(E)} \left( F(\nu) - \underline{I}(\nu)\right),
\]
which leads to the following Lemma:
\begin{lemma}
	\label{ldplowerbound}
	Let (M) be satisfied.
	For $G \subseteq \mathcal{P}(E)$ open it holds
	\[ \liminf_{n\rightarrow \infty} \sup_{\mu \in \underline{M}_n(\pi_0)} \frac{1}{n} \ln \mu(L_n \in G) \geq - \inf_{\nu \in G} \underline{I}(\nu).
	\]
	\begin{proof}
		The proof is an adapted version of \cite[Theorem 1.2.3.]{dupuis2011weak}.
		
		We work with the Laplace principle lower bound stated just before the Lemma.
		
		Without loss of generality, assume $\inf_{\nu \in G} \underline{I}(\nu) < \infty$. Let $\nu \in G$ such that $\underline{I}(\nu) < \infty$. Choose $M \in \mathbb{R}$ such that $\underline{I}(\nu) < M$ and $k\in \mathbb{N}$ such that $B(\nu,\frac{1}{k}) := \{ \mu \in \mathcal{P}(E) : \hat{d}(\mu,\nu) \leq \frac{1}{k} \} \subseteq G$, where $\hat{d}$ is some metric on $\mathcal{P}(E)$ compatible with weak convergence. Define
		\[
		h(\theta) := -M\left((\hat{d}(\nu,\theta) \cdot k) \wedge 1\right).
		\]
		We find $-M \leq h \leq 0$, $h(\nu) = 0$ and $h(\theta) = -M$ for $\theta \in B(\nu,\frac{1}{k})^C$.
		Thus for any $\mu \in \mathcal{P}(E^n)$
		\[
		\int_{E^n} \exp(n h \circ L_n) d\mu \leq \exp(-nM) + \mu(L_n \in B(\nu,\delta)) \leq \max\{2\exp(-nM),2\mu(L_n \in B(\nu,\delta))\}.
		\]
		And therefore
		\begin{align*}
		\max\{ \liminf_{n\rightarrow \infty} \sup_{\mu \in \underline{M}_n(\pi_0)} \frac{1}{n} \ln \mu(L_n \in B(\nu,\delta)) , -M \} &\geq \liminf_{n\rightarrow \infty} \sup_{\mu \in \underline{M}_n(\pi_0)} \frac{1}{n} \ln \int_{E^n} \exp(n h\circ L_n) d\mu \\
		&\geq \sup_{\hat{\nu}\in\mathcal{P}(E)} ( h(\hat{\nu}) - \underline{I}(\hat{\nu})) \\
		&\geq h(\nu)-\underline{I}(\nu) = -\underline{I}(\nu).
		\end{align*}
		Since $M > I(\nu)$
		\[
		\liminf_{n\rightarrow \infty} \sup_{\mu \in \underline{M}_n(\pi_0)} \frac{1}{n} \ln \mu(L_n \in B(\nu,\delta)) \geq -\underline{I}(\nu),
		\]
		and using $B(\nu,\delta) \subseteq G$ and the fact that the above reasoning works for all $\nu \in G$ with $\underline{I}(\nu) < \infty$, we get the claim.
	\end{proof}
\end{lemma}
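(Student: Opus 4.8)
The plan is the standard passage from a Laplace-principle lower bound to a large-deviations lower bound, in the spirit of \cite[Theorem 1.2.3]{dupuis2011weak}. The input is the Laplace lower bound recorded just before the lemma: for every $F\in C_b(\mathcal{P}(E))$,
\[
\liminf_{n\to\infty}\ \sup_{\mu\in\underline{M}_n(\pi_0)}\ \tfrac1n\ln\int_{E^n}\exp(nF\circ L_n)\,d\mu\ \ge\ \sup_{\nu\in\mathcal{P}(E)}\big(F(\nu)-\underline{I}(\nu)\big).
\]
First I would reduce to a local statement: if $\inf_{\nu\in G}\underline{I}(\nu)=\infty$ there is nothing to prove, so it suffices to fix $\nu\in G$ with $\underline{I}(\nu)<\infty$ and to show $\liminf_{n}\sup_{\mu\in\underline{M}_n(\pi_0)}\tfrac1n\ln\mu(L_n\in G)\ge-\underline{I}(\nu)$; taking the supremum over all such $\nu$ then yields the claim.

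Fix such a $\nu$. Choose $M>\underline{I}(\nu)$ and, using that $G$ is open, $k\in\mathbb{N}$ with $B(\nu,1/k):=\{\mu\in\mathcal{P}(E):\hat{d}(\nu,\mu)\le 1/k\}\subseteq G$, where $\hat{d}$ is a metric on $\mathcal{P}(E)$ compatible with weak convergence. Introduce the test function $h(\theta):=-M\big((k\,\hat{d}(\nu,\theta))\wedge 1\big)$, which belongs to $C_b(\mathcal{P}(E))$ and satisfies $-M\le h\le 0$, $h(\nu)=0$, and $h\equiv -M$ on $B(\nu,1/k)^{c}$. Splitting the integral over $\{L_n\in B(\nu,1/k)\}$ and its complement gives, for every $\mu\in\mathcal{P}(E^n)$,
\[
\int_{E^n}\exp(nh\circ L_n)\,d\mu\ \le\ e^{-nM}+\mu\big(L_n\in B(\nu,1/k)\big),
\]
hence $\tfrac1n\ln\int_{E^n}\exp(nh\circ L_n)\,d\mu\le\tfrac{\ln 2}{n}+\max\{-M,\ \tfrac1n\ln\mu(L_n\in B(\nu,1/k))\}$.

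Finally I would take $\sup_{\mu\in\underline{M}_n(\pi_0)}$ (which commutes with a maximum against a constant), then $\liminf_n$, using that $x\mapsto\max\{-M,x\}$ is continuous and non-decreasing and so commutes with $\liminf$, to obtain
\[
\max\Big\{-M,\ \liminf_{n}\sup_{\mu\in\underline{M}_n(\pi_0)}\tfrac1n\ln\mu\big(L_n\in B(\nu,1/k)\big)\Big\}\ \ge\ \liminf_{n}\sup_{\mu\in\underline{M}_n(\pi_0)}\tfrac1n\ln\int_{E^n}\exp(nh\circ L_n)\,d\mu.
\]
Applying the Laplace lower bound to $h$ bounds the right-hand side below by $\sup_{\hat{\nu}}(h(\hat{\nu})-\underline{I}(\hat{\nu}))\ge h(\nu)-\underline{I}(\nu)=-\underline{I}(\nu)$. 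Since $-M<-\underline{I}(\nu)$, the maximum on the left can only be attained by its second argument, so $\liminf_{n}\sup_{\mu}\tfrac1n\ln\mu(L_n\in B(\nu,1/k))\ge-\underline{I}(\nu)$; because $B(\nu,1/k)\subseteq G$ we have $\mu(L_n\in B(\nu,1/k))\le\mu(L_n\in G)$, which upgrades this to $\liminf_{n}\sup_{\mu}\tfrac1n\ln\mu(L_n\in G)\ge-\underline{I}(\nu)$, and the supremum over admissible $\nu$ finishes the proof.

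I do not anticipate a genuine obstacle here: the substance is entirely contained in the already-established Laplace-principle lower bound for $\underline{\beta}$, and what remains is bookkeeping. The only mildly delicate points are the exchanges of $\sup_\mu$ with $\liminf_n$ and with the truncation $\max\{-M,\cdot\}$ (handled by the monotonicity and continuity remark above), and the observation that $\underline{M}_n(\pi_0)$ is nonempty so the suprema are meaningful, which holds since $\pi_0\otimes\pi\otimes\cdots\otimes\pi\in\underline{M}_n(\pi_0)$.
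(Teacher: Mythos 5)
Your proposal is correct and follows essentially the same route as the paper: the same test function $h(\theta)=-M((k\,\hat d(\nu,\theta))\wedge 1)$, the same split of $\int \exp(nh\circ L_n)\,d\mu$ over $\{L_n\in B(\nu,1/k)\}$ and its complement, and the same application of the Laplace lower bound for $\underline{\beta}$ followed by removing the truncation via $M>\underline{I}(\nu)$. The small extra remarks you add (commuting $\sup_\mu$ and $\liminf_n$ with $\max\{-M,\cdot\}$, nonemptiness of $\underline{M}_n(\pi_0)$) are fine and consistent with the paper's argument.
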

The proof of Theorem \ref{MainApplicationSum} is now done, as it follows from Theorem \ref{robustupperbound} and Lemma \ref{ldplowerbound}.

The following illustrates the obtained results. Note that to calculate the rates, as is usual in large deviations theory, the necessary minimization can be solved efficiently (at least in theory) over convex sets $A$, since $I$ is convex.
\begin{example}
	\label{simpleexample}
	Consider the state space $\{1,2,3\}$ with discrete metric, i.e.~$d(i,j) = 0$ if $i = j$ and $d(i,j) = 1$, else.
	The Markov chain is given by its initial distribution $\pi_0 = \delta_3$ and transition kernel $\pi$ with matrix representation
	\[\begin{bmatrix}
	0.6 & 0.2 & 0.2 \\
	0.3 & 0.4 & 0.3 \\
	0 & 0.3 & 0.7
	\end{bmatrix}.
	\]

	Suppose we are interested in the tail event that the empirical measure $L_n$ under the Markov chain is close (in a certain sense) to the initial distribution $\pi_0$. We are uncertain of the precise model specification of the Markov chain and want to find the worst case (i.e.~slowest possible) convergence rate to zero of this tail event.
	
	Formally, let $r = 0.05$ and take, for $\kappa = 0.2$, the set of measures $A = B_{d_W}(\delta_3,\kappa)$, i.e.~the Wasserstein-1-ball around $\delta_3$ with radius $\kappa$. The set $\{ L_n \in A\}$ models the above mentioned tail event. What is the (exponential) asymptotic rate of convergence of 
	\begin{align}
	\label{eqrate}
	\sup_{\mu \in M_n(\delta_3)} \mu(L_n \in A) \rightarrow 0
	\end{align}
	as $n\rightarrow \infty$? Note that $r$ and the transition kernel are as always implicitly included in $M_n(\delta_3)$.
	
	Calculating the upper bound of Theorem $\ref{MainApplicationSum}$ yields a worst case exponential rate
	\[
	r_{\text{worst case}} \approx 0.0511.
	\]
	This is significantly lower than the normal rate for the Markov chain without the robustness (i.e.~the case $r=0$), which is
	\[
	r_{\text{normal}} \approx 0.0910.
	\]
	Figure 1 showcases the difference in convergence speed.
	Notably, the optimizer of the optimization problem to obtain the worst case rate also yields a kernel $\hat{\pi}$ such that $\pi_0 \otimes \hat{\pi} \otimes ... \otimes \hat{\pi} \in M_n(\pi_0)$ and the Markov chain with transition kernel $\hat{\pi}$ attains the worst case rate, i.e.
	\[
	\pi_0 \otimes \hat{\pi} \otimes ... \otimes \hat{\pi} (L_n \in A) \sim \exp(-n \cdot r_{\text{worst case}}).
	\]
	In other words, the worst case rate in \eqref{eqrate} is obtained and one sequence of optimal measures is Markovian with transition kernel $\hat{\pi}$ given by the matrix
	\[\begin{bmatrix}
	0.6-r & 0.2 & 0.2+r \\
	0.3-r & 0.4 & 0.3+r \\
	0 & 0.3-r & 0.7+r
	\end{bmatrix}.
	\]
	Figure 1 shows a simulated convergence rate for both the initial Markov chain and the Markov chain with worst case transition kernel $\hat{\pi}$ (100 paths simulated) and a comparison of the respective stationary distributions.
	
	Note that in the above example the rates are asymptotically sharp, as the worst-case kernel $\hat{\pi}$ for the rate function is already absolutely continuous with respect to $\pi$, so using $\underline{I}$ instead of $I$ yields the same rate.
	
	Using the above example, one can get an idea when upper and lower bounds of Theorem \ref{MainApplicationSum} may not coincide. If we do not restrict ourselves to $\underline{I}$, it may happen that no optimal kernel $\hat{\pi}$ is absolutely continuous with respect to the initial kernel $\pi$. In that case, we can no longer guarantee that some near optimal kernel $\hat{\pi}$ satisfies condition (M.1), which is also needed in the non-robust case to show the large deviations lower bound.
	\begin{figure}[t]
	\centering
	\vspace{-4cm}
	\hspace{-2.15cm}
	\mbox{\includegraphics[width=.55\textwidth]{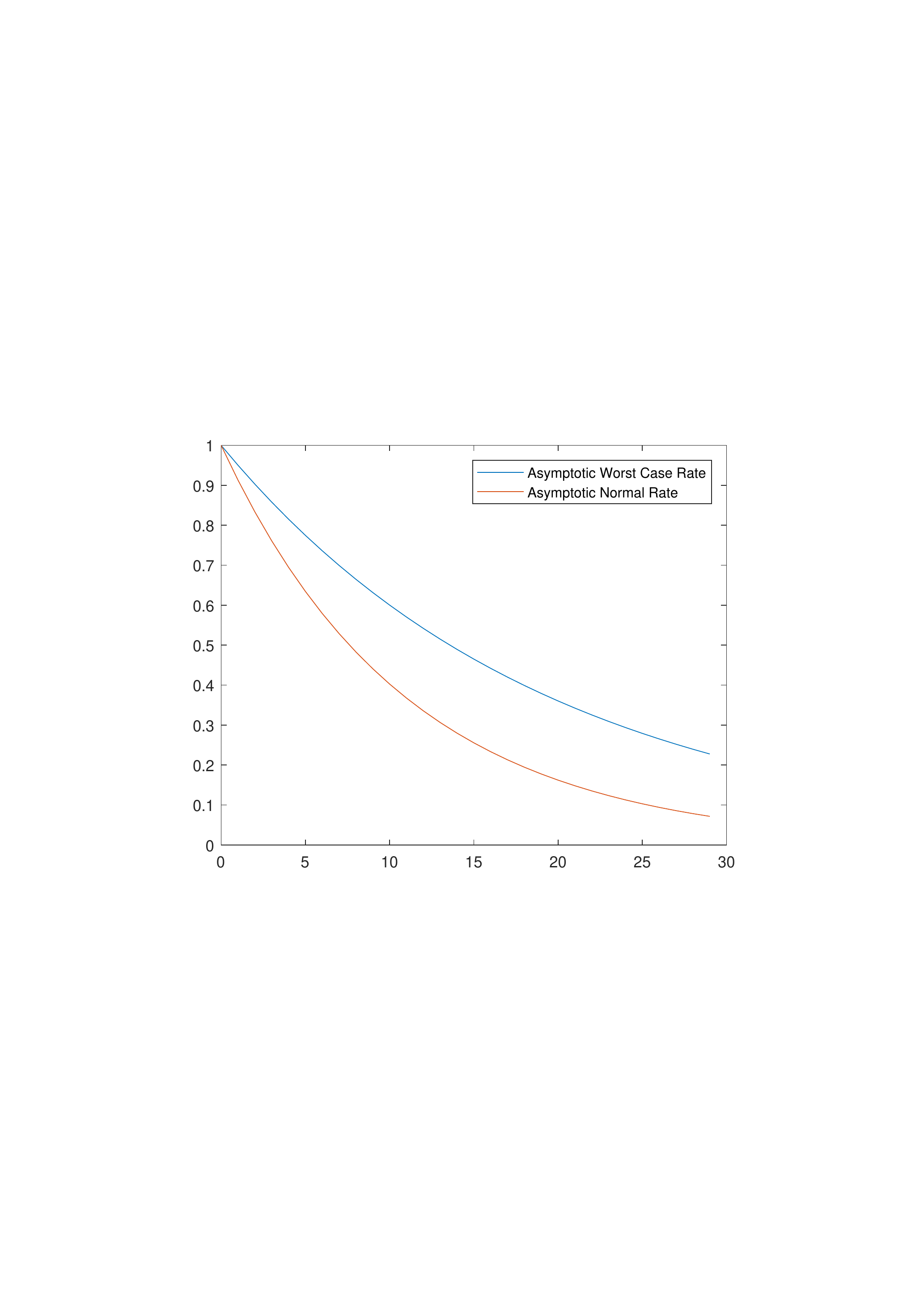}\hspace{-3.17cm}
	\includegraphics[width=.55\textwidth]{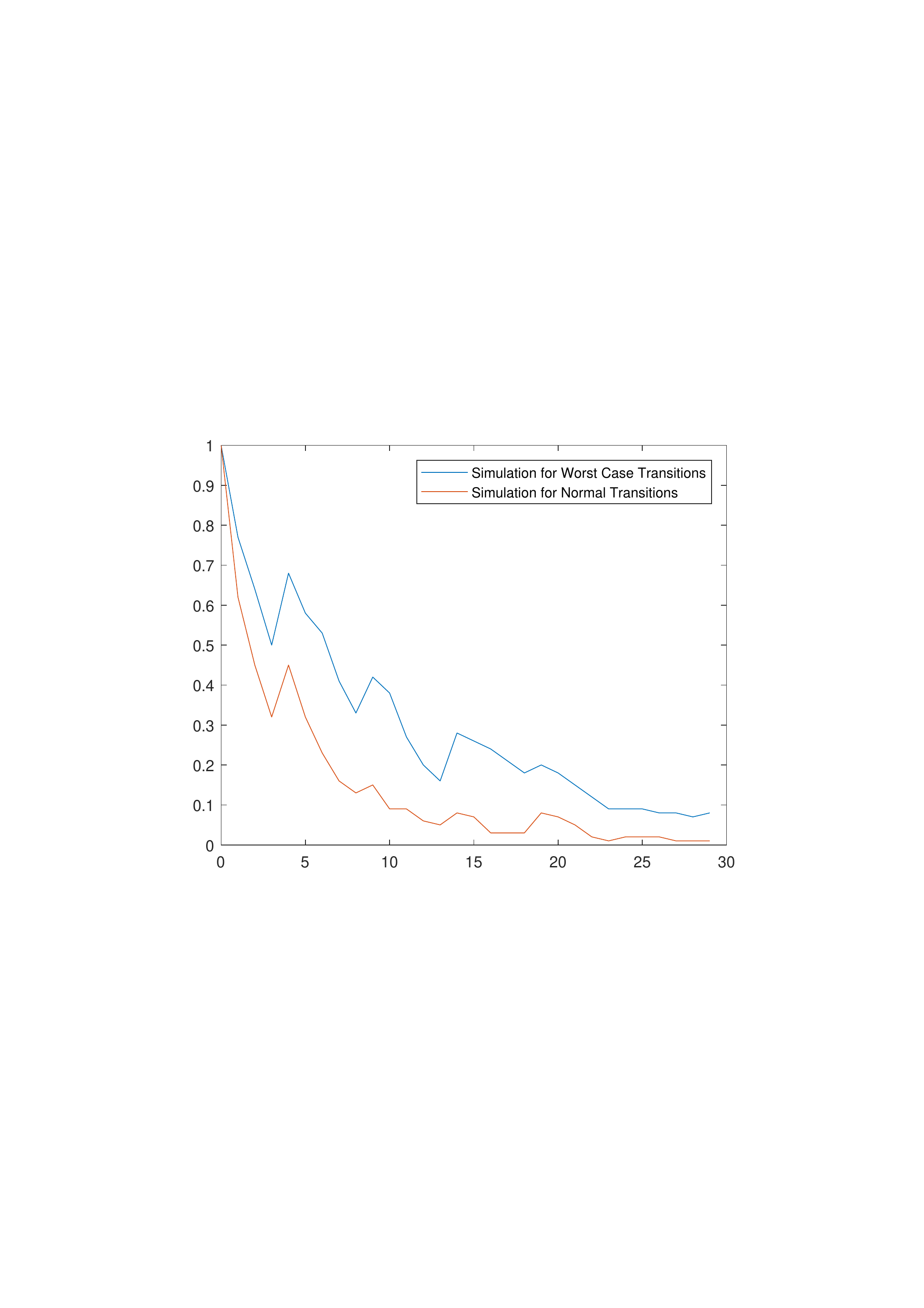}\hspace{-3.17cm}
	\includegraphics[width=.55\textwidth]{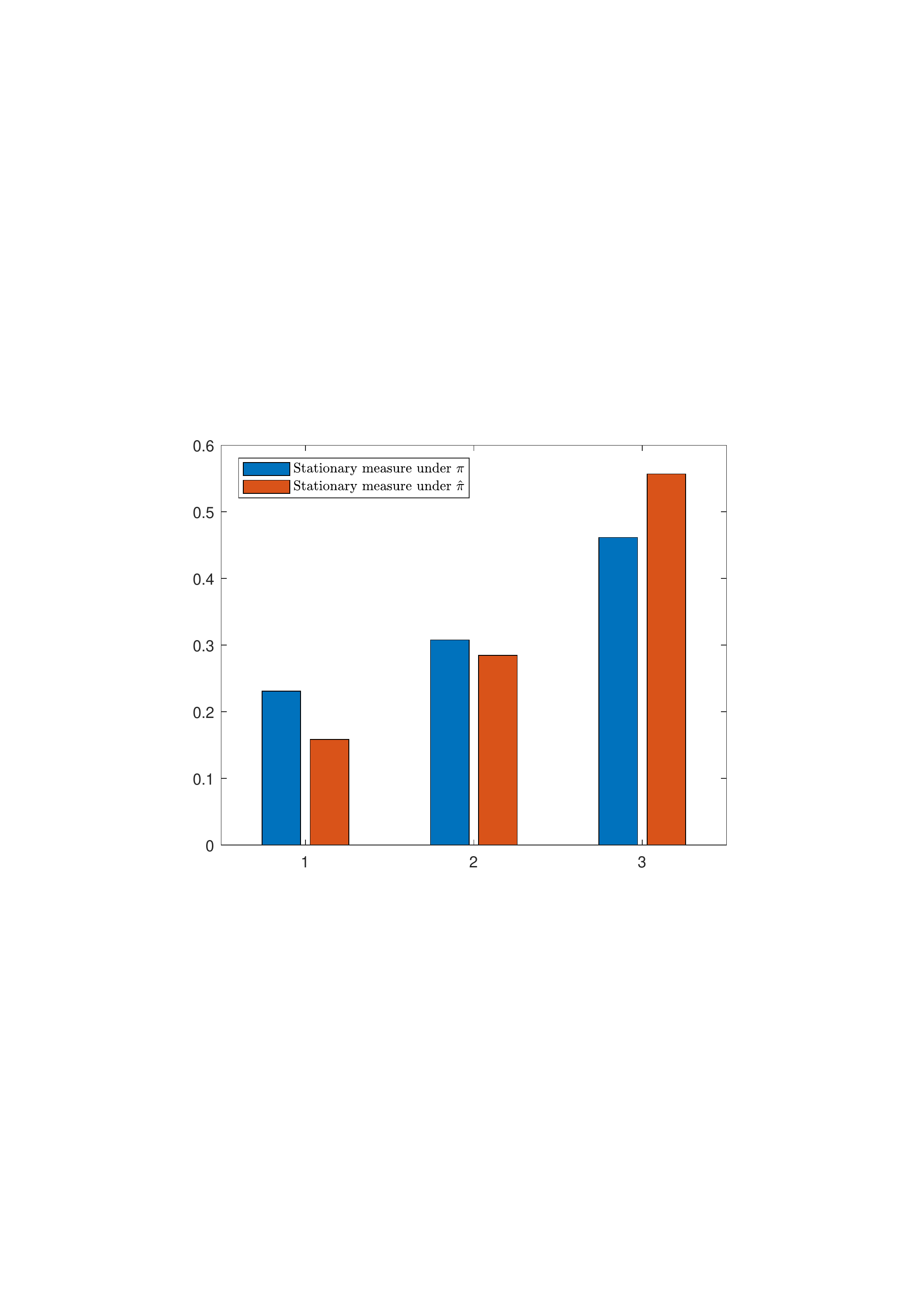}\hspace{-2cm}}
	\label{fig:figure3}
	\vspace{-4cm}
	\caption{Illustration of convergence rates, simulated (100 paths) realized convergence and the stationary distributions under the normal Markov chain and the robust worst-case Markov chain.}
	\end{figure}
\end{example}

\subsection{Robust Weak Law of Large Numbers}
Let $(E,d)$ be compact. 
In this section, Theorem \ref{limitweakrobust} is proven. We first show the upper bound in Theorem \ref{limitweakrobust2} and explain afterwards how to obtain the lower bound.

Up to Theorem \ref{limitweakrobust2}, let $\pi$ satisfy the Feller property. Define
\[
\beta(\mu,\nu) := \left\{ \begin{array}{ll} 0,& \text{if } d_W(\mu,\nu)\leq r, \\ 
\infty,&
\text{else,} \end{array} \right.
\]
for some $r \geq 0$ 
and find
\[
\beta_n^{\theta}(\nu) = \infty \cdot \eins_{\left(M_n(\theta)\right)^C}(\nu).
\]

\begin{lemma}
	\label{weaklimbeta}
	$\beta_2^{\cdot}(\cdot)$ is convex and lower semi-continuous.
	\begin{proof}
		We first show convexity: Let $\theta_1, \theta_2 \in \mathcal{P}(E)$, $\nu_1,\nu_2 \in \mathcal{P}(E^2)$ and $\lambda \in (0,1)$. We have to show
		\[
		\beta_2^{\lambda \theta_1 + (1-\lambda) \theta_2}(\lambda \nu_1 + (1-\lambda) \nu_2) \leq \lambda \beta_2^{\theta_1}(\nu_1) + (1-\lambda) \beta_2^{\theta_2}(\nu_2).
		\]
		To this end, it suffices to show that if the right hand side is zero, the left hand side has to be zero as well. If the right hand side is zero, then
		both $\nu_1 \in M_2(\theta_1)$ and $\nu_2 \in M_2(\theta_2)$. It follows by Lemma \ref{Mnconv} that $\lambda \nu_1 + (1-\lambda)\nu_2 \in M_2(\lambda \theta_1 + (1-\lambda) \theta_2)$ and thus the left hand side is also zero.
		
		We now show lower semi-continuity: Let $(\theta_n,\nu_n) \stackrel{w}{\rightarrow} (\theta,\nu) \in \mathcal{P}(E)\times \mathcal{P}(E^2)$. We have to show
		\[
		\liminf_{n\rightarrow \infty} \beta_2^{\theta_n}(\nu_n) \geq \beta_2^{\theta}(\nu).
		\]
		Without loss of generality, the left hand side is not equal infinity. We have to show that the right hand side is zero. We first choose an arbitrary subsequence and then a further subsequence still denoted by $(\theta_n,\nu_n)_{n\in\mathbb{N}}$ such that for all $n\in \mathbb{N}$
		\begin{align*}
			\beta_2^{\theta_n}(\nu_n) &< \infty,\\
			d_W(\theta_n,\theta) &\leq \frac{1}{n}.
		\end{align*}
		It follows that $\nu_n \in M_2(\theta_n)$ for all $n \in \mathbb{N}$ and with the same notation and argumentation as in the proof of \ref{lscLDP} it follows $\nu \in M_2^{r + \frac{1}{n}}(\theta)$ for all $n\in \mathbb{N}$ and thus $\nu \in M_2(\theta)$, i.e.~$\beta_2^{\theta}(\nu) = 0$.
	\end{proof}
\end{lemma}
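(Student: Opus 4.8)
The plan is to exploit that $\beta$, and hence $\beta_2^{\theta}(\cdot)$, takes only the two values $0$ and $+\infty$: by the identity $\beta_n^{\theta}=\infty\cdot\eins_{(M_n(\theta))^C}$ recorded just above, $\beta_2^{\theta}(\nu)=0$ if $\nu\in M_2(\theta)$ and $\beta_2^{\theta}(\nu)=\infty$ otherwise. For any function with values in $\{0,\infty\}$ on a convex domain, convexity is equivalent to convexity of its zero set, and lower semi-continuity is equivalent to closedness of its zero set (its sublevel sets being either empty, the zero set, or the whole space). So I would reduce the lemma to showing that
\[
G:=\{(\theta,\nu)\in\mathcal{P}(E)\times\mathcal{P}(E^2):\nu\in M_2(\theta)\}
\]
is convex and closed, where $\mathcal{P}(E)\times\mathcal{P}(E^2)$ carries the product of the weak topologies.

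Convexity of $G$ is exactly Lemma \ref{Mnconv}: for $(\theta_1,\nu_1),(\theta_2,\nu_2)\in G$ and $\lambda\in(0,1)$ that lemma yields $\lambda\nu_1+(1-\lambda)\nu_2\in M_2(\lambda\theta_1+(1-\lambda)\theta_2)$, i.e.~the convex combination stays in $G$; and when one of the two points lies outside $G$ the convexity inequality for $\beta_2^{\cdot}(\cdot)$ is vacuous, its right-hand side being $+\infty$. Hence the convexity half of the lemma is immediate.

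For closedness of $G$, I would take $(\theta_n,\nu_n)\to(\theta,\nu)$ weakly with $\nu_n\in M_2(\theta_n)$, write $\nu_n=\mu_n\otimes K_n$ and $\nu=\mu\otimes K$, and verify the two defining conditions of $M_2(\theta)$ separately. The second condition, $d_W(K_n(x),\pi(x))\le r$ for $\mu_n$-a.a.~$x$, does not mention $\theta_n$; rewriting it as in the proof of Lemma \ref{M2closed} exhibits it as a closed condition on $\nu_n$ once $\pi$ has the Feller property (which makes $x\mapsto\int_E f\,d\pi(x)$ continuous), so it passes to the limit. For the first condition, $d_W(\mu_n,\theta_n)\le r$, I would use that the first-marginal map on $\mathcal{P}(E^2)$ is weakly continuous, so $\mu_n\to\mu$, and that on the compact space $E$ the distance $d_W$ metrizes weak convergence and is therefore jointly continuous, giving $d_W(\mu,\theta)=\lim_n d_W(\mu_n,\theta_n)\le r$. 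A variant avoiding joint continuity of $d_W$ is to pass to a subsequence with $d_W(\theta_n,\theta)\le 1/n$ and work with the closed relaxations $M_2^{r,n}(\theta)$ from the proof of Lemma \ref{lscLDP}, using that $\nu_m\in M_2^{r,n}(\theta)$ for all $m\ge n$ and that $\bigcap_{n}M_2^{r,n}(\theta)=M_2(\theta)$. Either way $\nu\in M_2(\theta)$, so $G$ is closed.

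I expect the only genuinely delicate point to be the treatment of the moving reference measure $\theta_n$ in the first constraint; everything else is a direct appeal to Lemma \ref{Mnconv} and to the closedness computation already carried out for $M_2(\theta)$ in Lemma \ref{M2closed}.
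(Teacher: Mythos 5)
Your proposal is correct and follows essentially the paper's own route: convexity is exactly the appeal to Lemma \ref{Mnconv} applied to the zero set of the $\{0,\infty\}$-valued functional, and your ``variant'' for closedness (passing to a subsequence with $d_W(\theta_n,\theta)\leq \tfrac{1}{n}$ and intersecting the relaxed closed sets $M_2^{r,n}(\theta)$ from Lemma \ref{lscLDP}) is precisely the paper's lower semi-continuity argument. Your primary closedness route, which handles the first-marginal constraint via joint continuity of $d_W$ under weak convergence on the compact space $E$ while treating the kernel constraint through the computation of Lemma \ref{M2closed}, is only a mild streamlining of the same idea and is also valid.
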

By applying Theorem \ref{mainth} and Corollary \ref{cor1} we get:
\begin{theorem}
	\label{limitweakrobust2}
	For all upper semi-continuous and bounded from above functions $F: \mathcal{P}(E) \rightarrow [-\infty,\infty)$ it holds
	\[
	\limsup_{n\rightarrow\infty} \sup_{\mu \in M_n(\pi_0)} \int_{E^n} F\circ L_n d\mu \leq \sup_{\substack{\nu \in \mathcal{P}(E) : \\ \exists q, \nu q = \nu: \nu \otimes q \in M_2(\nu)}} F(\nu).
	\]
\end{theorem}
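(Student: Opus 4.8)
The plan is to obtain the statement directly from the upper bound of Theorem \ref{mainth}, extended via Corollary \ref{cor1}, once both sides have been rewritten in terms of the sets $M_n(\pi_0)$ and $M_2(\nu)$.

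First I would translate the left-hand side. Since $\beta_n^{\theta}(\nu) = \infty\cdot\eins_{(M_n(\theta))^C}(\nu)$, and since $F$ is bounded from above so that $\int_{E^n}F\circ L_n\,d\mu$ is never $+\infty$, every $\mu\notin M_n(\pi_0)$ contributes $-\infty$ (by the convention $\infty-\infty:=-\infty$) to the supremum defining $\rho_n$. Hence
\[
\frac{1}{n}\rho_n(nF\circ L_n) = \sup_{\mu\in\mathcal{P}(E^n)}\Big(\int_{E^n}F\circ L_n\,d\mu - \tfrac{1}{n}\beta_n^{\pi_0}(\mu)\Big) = \sup_{\mu\in M_n(\pi_0)}\int_{E^n}F\circ L_n\,d\mu,
\]
where $M_n(\pi_0)\ne\emptyset$ since it contains $\pi_0\otimes\pi\otimes\cdots\otimes\pi$.

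Next I would translate the right-hand side. Because $\beta_2^{\nu}$ takes only the values $0$ and $\infty$ (it equals $\infty\cdot\eins_{(M_2(\nu))^C}$), the rate function from \eqref{eq::RateFunction}, namely $I(\nu)=\inf_{q:\nu q=\nu}\beta_2^{\nu}(\nu\otimes q)$, satisfies $I(\nu)=0$ exactly when there is a kernel $q$ with $\nu q=\nu$ and $\nu\otimes q\in M_2(\nu)$, and $I(\nu)=\infty$ otherwise. Therefore
\[
\sup_{\nu\in\mathcal{P}(E)}\big(F(\nu)-I(\nu)\big) = \sup_{\substack{\nu\in\mathcal{P}(E):\\ \exists q,\ \nu q=\nu:\ \nu\otimes q\in M_2(\nu)}}F(\nu).
\]

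Finally I would verify the hypotheses. Conditions (B.1) and (B.2) hold by Lemma \ref{weaklimbeta}; $\beta$ is bounded below with $\beta(\nu,\nu)=0$ by construction; and (T) holds via (T.1') since $(E,d)$ is compact. The upper bound of Theorem \ref{mainth} then gives the claim for all bounded upper semi-continuous $F$. To reach $F$ valued in $[-\infty,\infty)$ and only bounded from above, I would note that $\beta_2^{\cdot}(\cdot)$ is lower semi-continuous, so $I$ is lower semi-continuous by Corollary \ref{cor1}(a) and hence has closed sub-level sets; these lie in the compact space $\mathcal{P}(E)$ (compact because $E$ is), so $I$ has compact sub-level sets, and Corollary \ref{cor1}(b) extends the bound. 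Combining the three displays above finishes the proof. The only point requiring care is the first translation --- checking that the $\infty-\infty:=-\infty$ convention together with $F$ bounded above genuinely collapses the supremum onto $M_n(\pi_0)$, and dually that $I$ is $\{0,\infty\}$-valued; everything else is bookkeeping.
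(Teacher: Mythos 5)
Your proposal is correct and follows essentially the same route as the paper: after Lemma \ref{weaklimbeta} the paper simply invokes Theorem \ref{mainth} and Corollary \ref{cor1}, and your explicit bookkeeping (rewriting $\rho_n$ as the supremum over $M_n(\pi_0)$, identifying $I$ as the $\{0,\infty\}$-valued indicator of the invariance constraint, and using compactness of $\mathcal{P}(E)$ plus lower semi-continuity of $I$ to get compact sub-level sets for Corollary \ref{cor1}(b)) is exactly the verification the paper leaves implicit.
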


We now focus on the lower bound in Theorem \ref{limitweakrobust}.
Therefore, let $\pi$ satisfy (M) (but no longer has to satisfy the Feller property). We define 
\[\underline{\beta}(\mu,\nu) := \left\{ \begin{array}{ll} 0,& \text{if } d_W(\mu,\nu)\leq r \text{ and } \mu \ll \nu, \\ 
\infty,&
\text{else,} \end{array} \right.\] so that (B.3) holds. We obtain
\[
\underline{\beta}_n^{\theta}(\nu) = \infty \cdot \eins_{(\underline{M}_n(\theta))^C}(\nu).
\]
Proving (B.1) for $\underline{\beta}$ works completely analogous to the case of $\beta$ in Lemma \ref{weaklimbeta} by replacing $M_n$ by $\underline{M}_n$. Applying Theorem \ref{mainth} yields
\[
\liminf_{n\rightarrow \infty} \sup_{\mu \in \underline{M}_n(\pi_0)} \int_{E^n} F \circ L_n d\mu \geq \sup_{\substack{\nu \in \mathcal{P}(E) : \\ \exists q, \nu q = \nu: \nu \otimes q \in \underline{M}_2(\nu)}} F(\nu)
\]
for all $F\in C_b(\mathcal{P}(E))$.
Theorem \ref{limitweakrobust} is shown.
\bibliographystyle{abbrv}

\end{document}